\newcommand{\ZZ}{\mathbb{Z}}    
\newcommand{\RR}{\mathbb{R}}    
\newcommand{\F}{\mathcal{F}}    
\newcommand{\nozero}{\setminus\{0\}}
\newcommand{\Tsymb}{\top}
\newcommand{\T}{^{\Tsymb}}
\def\^#1{^{(#1)}}
\def\s^#1{^{\smash{(#1)}}}
\newcommand*{\horzbar}{\rule[.5ex]{2.5ex}{0.4pt}}
\newcommand{\wideword}[1]{\quad\text{#1}\quad}
\newcommand{\wideand}{\wideword{and}}
\def\:{\colon}
\newcommand{\cupdot}{\mathbin{\mathaccent\cdot\cup}}
\newcommand{\labelstyle}[1]{\upshape(\textit{#1})}
\newcommand{\mylabel}{\labelstyle{\roman*}}
\newenvironment{myenumerate}{\begin{enumerate}[label=\mylabel]}{\end{enumerate}}
\def\itm#1{{\labelstyle{\romannumeral#1\relax}}}
\def\nlspace{\nolinebreak\space}
\def\nls{\nlspace}
\newcommand{\freespace}{\kern.07em} 
\newcommand{\free}{\freespace\cdot\freespace} 
\newcommand{\enquote}[1]{``#1''}                                 
\newcommand{\ul}[1]{\underline{\smash{#1}}}
\newcommand{\msays}[1]{{\footnotesize\textcolor{red}{\textbf{M:} #1}}}
\newcommand{\TODO}{\msays{TODO}}
\numberwithin{equation}{section}
\newtheoremstyle{mythmstyle} 
    {\parsep}                    
    {\parsep}                    
    {\itshape}                   
    {}                           
    {\bfseries\scshape}          
    {.}                          
    {.5em}                       
    {}  
\newtheoremstyle{mydefstyle} 
    {\parsep}                    
    {\parsep}                    
    {}                   
    {}                           
    {\mdseries\scshape}          
    {.}                          
    {.5em}                       
    {}  
\theoremstyle{theorem}
\newtheorem{theorem}{Theorem}[section]
\newtheorem{corollary}[theorem]{Corollary}
\newtheorem{lemma}[theorem]{Lemma}
\newtheorem{proposition}[theorem]{Proposition}
\newtheorem{conjecture}[theorem]{Conjecture}
\theoremstyle{definition}
\newtheorem{definition}[theorem]{Definition}
\newtheorem{example}[theorem]{Example}
\newtheorem{remark}[theorem]{Remark}
\newtheorem{question}[theorem]{Question}
\newtheorem{observation}[theorem]{Observation}
\newtheorem{construction}[theorem]{Construction}
\newtheorem{statement}[theorem]{Statement}
\crefname{theorem}{Theorem}{Theorems}
\crefname{proposition}{Proposition}{Propositions}
\crefname{lemma}{Lemma}{Lemmas}
\crefname{corollary}{Corollary}{Corollaries}
\crefname{remark}{Remark}{Remarks}
\crefname{example}{Example}{Examples}
\crefname{definition}{Definition}{Definitions}
\crefname{problem}{Problem}{Problems}
\crefname{observation}{Observation}{Observation}
\crefname{construction}{Construction}{Construction}
\theoremstyle{theorem}
\providecommand{\customgenericname}{}
\newcommand{\newcustomtheorem}[2]{%
  \newenvironment{#1}[1]
  {%
   \renewcommand\customgenericname{#2}%
   \renewcommand\theinnercustomgeneric{##1}%
   \innercustomgeneric
  }
  {\endinnercustomgeneric}
}
\DeclareMathOperator{\aff}{aff}
\DeclareMathOperator{\conv}{conv}
\DeclareMathOperator{\cone}{cone}
\DeclareMathOperator{\Aut}{Aut}
\DeclareMathOperator{\diag}{diag}
\DeclareMathOperator{\Ortho}{O}
\DeclareMathOperator{\Span}{span}
\DeclareMathOperator{\im}{im}
\DeclareMathOperator{\rank}{rank}
\DeclareMathOperator{\tr}{tr}
\DeclareMathOperator{\Id}{Id}
\DeclareMathOperator{\id}{id}
\DeclareMathOperator{\dist}{dist}
\DeclareMathOperator{\vol}{vol}  	
\DeclareMathOperator{\Int}{int}  	
\DeclareMathOperator{\relint}{relint}
\DeclareMathOperator{\skel}{sk}
\let\eps=\epsilon
\let\x=\times
\def\...{...}
\newcommand{\shortStyle}{\textit}
\newcommand{\ie}{\shortStyle{i.e.,}}
\newcommand{\eg}{\shortStyle{e.g.}}
\newcommand{\wrt}{\shortStyle{w.r.t.}}
\newcommand{\cf}{\shortStyle{cf.}}
\newcommand{\resp}{resp.}
\let\angle=\measuredangle
\renewcommand*{\eqref}[1]{%
  \hyperref[{#1}]{\textup{\tagform@{\ref*{#1}}}}%
}
\newenvironment{disable}
    {\color{lightgray}}
    {}
\begin{document}



\expandafter\title[%
    Rigidity, tensegrity and reconstruction of polytopes
]{%
    Rigidity, tensegrity and reconstruction of polytopes under metric constraints
} 
		
\author[M. Winter]{Martin Winter}
\address{Mathematics Institute, University of Warwick, Coventry CV4 7AL, United Kingdom}
\email{martin.h.winter@warwick.ac.uk}
	
\subjclass[2010]{51M20, 52B11, 52C25}
%
\keywords{convex polytopes, reconstruction from the edge-graph, Wachspress coordinates, rigidity and tensegrity of frameworks}
		
\date{\today}
\begin{abstract}
We conjecture that a convex polytope is uniquely determined up to isometry by its edge-graph, edge lengths and the collection of distances~of its vertices to some arbitrary interior point, across all dimensions and all combinatorial types. 
We conjecture even stronger that for two polytopes $P\subset\RR^d$ and $Q\subset\RR^e$ with the same edge-graph it is not possible that $Q$ has longer edges than $P$ while also having smaller vertex-point distances.

We develop techniques to attack these questions and we verify them in~three relevant special cases: $P$ and $Q$ are centrally symmetric, $Q$ is a slight perturbation of $P$, and $P$ and $Q$ are combinatorially equivalent. 
In the first two cases the statements stay true if we replace $Q$ by some graph embedding $q\:V(G_P)\to\RR^e$ of the edge-graph $G_P$ of $P$, which can be interpreted as local \resp\ universal rigidity of certain tensegrity frameworks.
We also establish that a polytope is uniquely determined up to affine equivalence by its edge-graph, edge lengths and the Wachspress coordinates of an arbitrary interior point.

We close with a broad overview of related and subsequent questions.

\end{abstract}

\maketitle

\section{Introduction}
\label{sec:introduction}

In how far can a convex polytope be reconstructed from partial combinatorial~and geometric data, such as its edge-graph, edge lengths, dihedral angles, etc., optionally up to combinatorial type, affine equivalence, or even isometry?
Questions of~this~na\-ture have a long history 
and are intimately linked~to~the various notions of rigidity.

In this article we address the reconstruction from the edge-graph and some~``graph-compatible'' distance data, such as edge lengths.
It is well-understood that the~edge-graph alone carries very little information about the polytope's full combinatorics, and trying to fix this by supplementing 
additional metric data reveals 
two opposing effects at play. 




First and foremost, we need to reconstruct the full combinatorics.
As a general rule of thumb, reconstruction from the edge-graph appears more tractable for polytope that have relatively few edges (such as \emph{simple} polytopes as proven by Blind \& Mani \cite{blind1987puzzles} and later by Kalai \cite{kalai1988simple})\footnote{Though ``few edges'' is not the best way to capture this in general, see \cite{doolittle2017reconstructing} or \cite{joswig2000neighborly}.}.
\mbox{At the same time~however such} polytopes often have too few edges to encode sufficient metric data for~reconstructing the geometry.
This is most~evident~for polygons, but happens non-trivially in higher~di\-mensions and with non-simple polytopes as well (see~\cref{fig:flex_polytope}).

\begin{figure}[h!]
    \centering
    \includegraphics[width=0.62\textwidth]{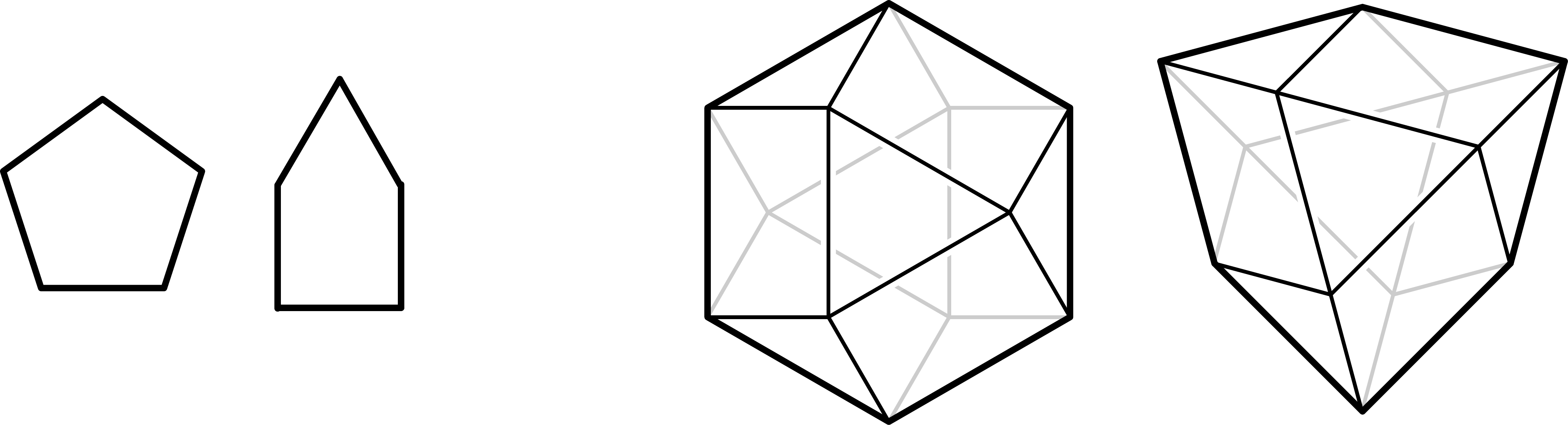}
    \caption{Non-isometric realizations with the same edge lenghts.}
    \label{fig:flex_polytope}
\end{figure}

In contrast, \emph{simplicial} polytopes have many edges and it follows from Cauchy's rigidity theorem that such are determined up to isometry from their edge lengths; \emph{if} we assume knowledge of the full combinatorics.
For simplicial polytopes however, the edge-graph alone is usually not enough to reconstruct the combinatorics in the first place (as evidenced by the abundance of neighborly polytopes).

This leads to the following question: how much and what kind of data do we~need to supplement to the edge-graph to permit
\begin{myenumerate}
    \item unique reconstruction of the combinatorics, also for polytopes with many edges (such as simplicial polytopes), and at the same time,
    \item unique reconstruction of the geometry, also for polytopes with few edges (such as simple polytopes).
\end{myenumerate}
Also, ideally the supplemented data fits into the structural framework provided by the edge-graph, that is, contains on the order of $\#\text{edges}+\#\text{vertices}$ datums.

We propose the following: besides the edge-graph and edge lengths, we also fix a point in the interior of the polytope $P$, and we record its distance to each vertex of $P$ (\cf\ \cref{fig:central_rods}).
We believe that this is sufficient data to reconstruct the polytope up to isometry across all dimensions and all combinatorial types.



\begin{figure}[h!]
    \centering
    \includegraphics[width=0.37\textwidth]{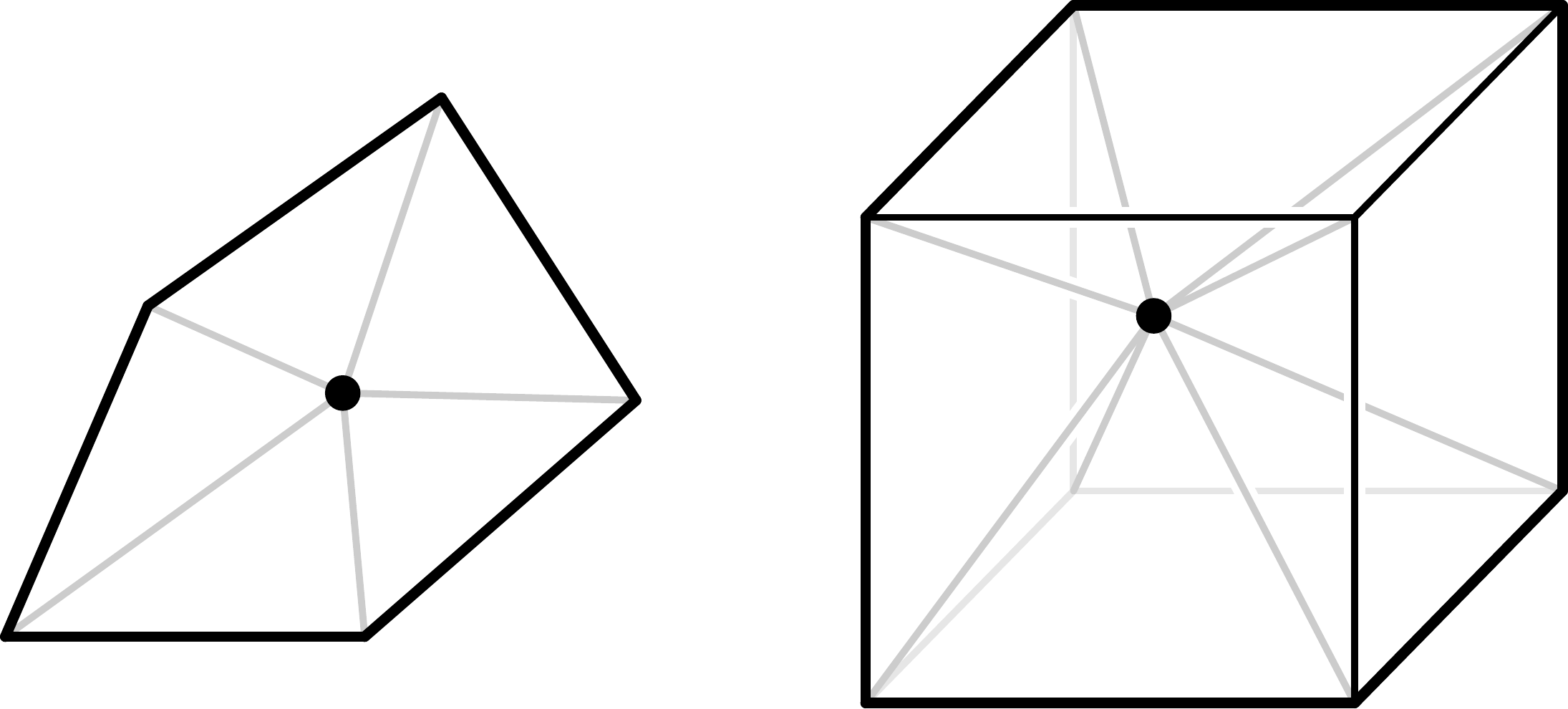} 
    \caption{A ``pointed polytope'', \ie\ a polytope $P\subset\RR^d$ with a point $x\in\Int(P)$. In addition to the edge lengths we also record the lengths~of the gray bars -- the ``vertex-point distances''.}
    \label{fig:central_rods}
\end{figure}

Here and in the following we can assume that the polytopes are suitably transla\-ted so that the chosen point is the origin $0\in\Int(P)$.



\begin{conjecture}\label{conj:main_rigid_intro}
Given polytopes $P\subset\RR^d$ and $Q\subset\RR^e$ with the origin in their respective interiors, and so that $P$ and $Q$ have isomorphic edge-graphs,\nlspace corresponding edges are of the same length, and corresponding vertices have the same~\mbox{distance}~to~the origin.
Then $P\simeq Q$ (\ie\ $P$ and $Q$ are isometric via an orthogonal \mbox{transformation}).
\end{conjecture}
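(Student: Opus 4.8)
The plan is to go after the stronger, monotone statement announced in the abstract, since it contains the present conjecture: it is enough to show that whenever $Q$ has no shorter edges and no longer vertex--origin distances than $P$, these quantities must in fact all agree and then $P\simeq Q$. I would first recast the data as a bar framework on the \emph{cone} $\hat G$ over the edge-graph $G_P$: add one apex, place it at the origin, and join it to every vertex by a ``rod''. The prescribed edge lengths and vertex--origin distances are then exactly the bar lengths of $\hat G$ at the configuration $(p,0)$, and the monotone statement becomes a rigidity statement for the tensegrity in which the edges of $G_P$ are cables (forbidden to lengthen) and the rods are struts (forbidden to shorten): no realisation of this tensegrity is noncongruent to $(p,0)$.

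The engine of the argument should be Izmestiev's matrix of the pointed polytope $(P,0)$: a symmetric matrix $M=M(P)$ indexed by $V(G_P)$, with $M_{ij}<0$ on edges, $M_{ij}=0$ off the diagonal on non-edges, the coordinate vectors of $P$ spanning $\ker M$ (so $\operatorname{corank}M=\dim P$), and exactly one negative eigenvalue. Its off-diagonal entries furnish edge stresses, and the numbers $\mu_i:=-\sum_j M_{ij}$ furnish rod stresses; these $\mu_i$ are positive and are -- up to normalisation -- the Wachspress coordinates of the origin, which is the bridge to the affine-reconstruction result quoted in the abstract. Since $M$ kills the coordinates, $(M_{ij}),(\mu_i)$ is an equilibrium self-stress of $\hat G$ at $(p,0)$, and its sign pattern is precisely the one compatible with the cable/strut designation above.

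To exploit it I would consider the quadratic ``energy'' $\mathcal E(x)=\sum_{ij\in E(G_P)}M_{ij}\|x_i-x_j\|^2+\sum_i\mu_i\|x_i\|^2$, which equals $-\sum_c (x^{(c)})^{\top}M x^{(c)}$ over the coordinate slices $x^{(c)}$ and therefore vanishes at $x=p$. Feeding in $x=q$ and using $M_{ij}<0$ with $\|q_i-q_j\|\ge\|p_i-p_j\|$, together with $\mu_i>0$ and $\|q_i\|\le\|p_i\|$, yields $\mathcal E(q)\le\mathcal E(p)=0$, with equality forcing every edge length and every vertex--origin distance of $Q$ to coincide with that of $P$. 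Thus the whole conjecture reduces to the reverse inequality $\mathcal E(q)\ge0$ -- equivalently, to a suitably restricted positive-semidefiniteness of Izmestiev's form. Wherever such semidefiniteness is available it does double duty: combined with $\mathcal E(q)\le0$ it forces $\mathcal E(q)=0$ and all the metric equalities, and then forces $Q$ to be an affine image of $P$ fixing the origin; the equalities of edge vectors' lengths and of the $\|q_i\|$ then upgrade this affine map to an orthogonal one, provided the vertices of $P$ together with the origin satisfy no homogeneous quadric that is orthogonal along every edge -- a nondegeneracy I would derive from the infinitesimal rigidity encoded in Izmestiev's theorem.

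The hard part is exactly $\mathcal E(q)\ge0$ in full generality: genuinely $M$ carries one negative eigenvalue, so the inequality fails at the level of matrices and must be rescued using convexity of $Q$ (or of the ambient class of realisations). In the three special cases this is tractable: for centrally symmetric $P,Q$ one decomposes $M$ under the central involution and checks that the isotypic block governing nontrivial motions is positive semidefinite, which gives even universal rigidity of the tensegrity; for $Q$ a small perturbation of $P$, the lone negative eigenvalue of $M$ together with $p\in\ker M$ is precisely a prestress-stability certificate and yields local rigidity; and for combinatorially equivalent $P,Q$ one joins them by a path of pointed polytopes of that type and uses that $M$ varies continuously and never degenerates, so no length-preserving branch can split off. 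Turning any one of these into an argument valid for all pointed polytopes -- most plausibly by producing, for every pointed polytope, a certificate that pins down the single negative direction of $M$ against all convex competitors -- is precisely the content of the conjecture, and is where I expect the real difficulty to lie.
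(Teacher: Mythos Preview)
Your overall framework is very close to the paper's: the Izmestiev matrix, its row sums as Wachspress coordinates, and the resulting self-stress on the coned graph are exactly the engine the paper uses, and you correctly isolate the single bad eigenvalue of $M$ as the obstacle. Two points, however, deserve correction.

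First, a direction error. Your monotone hypothesis (``$Q$ has no shorter edges and no longer vertex--origin distances than $P$'') is the \emph{opposite} of the tensegrity you then describe (edges as cables, rods as struts), and it is the opposite of what your centrally symmetric argument actually delivers. With your stated inequalities $\|q_i-q_j\|\ge\|p_i-p_j\|$ and $\|q_i\|\le\|p_i\|$ you correctly get $\mathcal E(q)\le 0$ and need $\mathcal E(q)\ge 0$; but the isotypic decomposition shows that on the odd subspace (where centrally symmetric $q$'s live, orthogonal to the Perron vector) the Izmestiev form is semidefinite in the direction that gives $\mathcal E(q)\le 0$, not $\ge 0$ --- so nothing closes. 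If you reverse to the paper's Conjecture~1.2 (edges in $Q$ at most as long, distances at least as large), the metric gives $\mathcal E(q)\ge 0$, the matrix gives $\mathcal E(q)\le 0$, and the argument works. This is a real sign error, not just a convention.

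Second, and more substantively, the paper's route around the bad eigenvalue is different from yours. You pin the Wachspress coordinates at the origin and try to control the sign of $\mathcal E(q)$ there. The paper instead lets the base point $x\in\Int(P)$ \emph{vary}: it defines the Wachspress map $\phi(x)=\sum_i\alpha_i(x)\,q_i$ and reduces everything to finding a single $x$ with $\|\phi(x)\|\le\|x\|$ (Lemma~4.1). This extra freedom is what makes the combinatorially equivalent case go through --- the paper proves $\phi\colon P\to Q$ is \emph{surjective} (Lemma~4.5) via an inductive topological argument, so some $x$ hits $0$. Your ``join $P$ to $Q$ by a path and use continuity of $M$'' sketch is much vaguer and does not obviously produce this. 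For the local case the paper likewise uses a topological perturbation argument on $\phi$ rather than prestress stability; indeed the paper exhibits infinitesimal flexes of these tensegrities (the twisting cube) and poses prestress stability as an open question, so your appeal to it as a known certificate is premature.
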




Requiring the origin to lie \emph{in the interior} is necessary to prevent counterexamples such as the one shown in \cref{fig:origin_outside_tnontriv_ex}.
This conjecture vastly generalizes~several~known reconstruction results, such as for matroid base polytopes or highly symmetric~polytopes~(see~\cref{sec:vast_generalization}).

\begin{figure}[h!]
    \centering
    \includegraphics[width=0.45\textwidth]{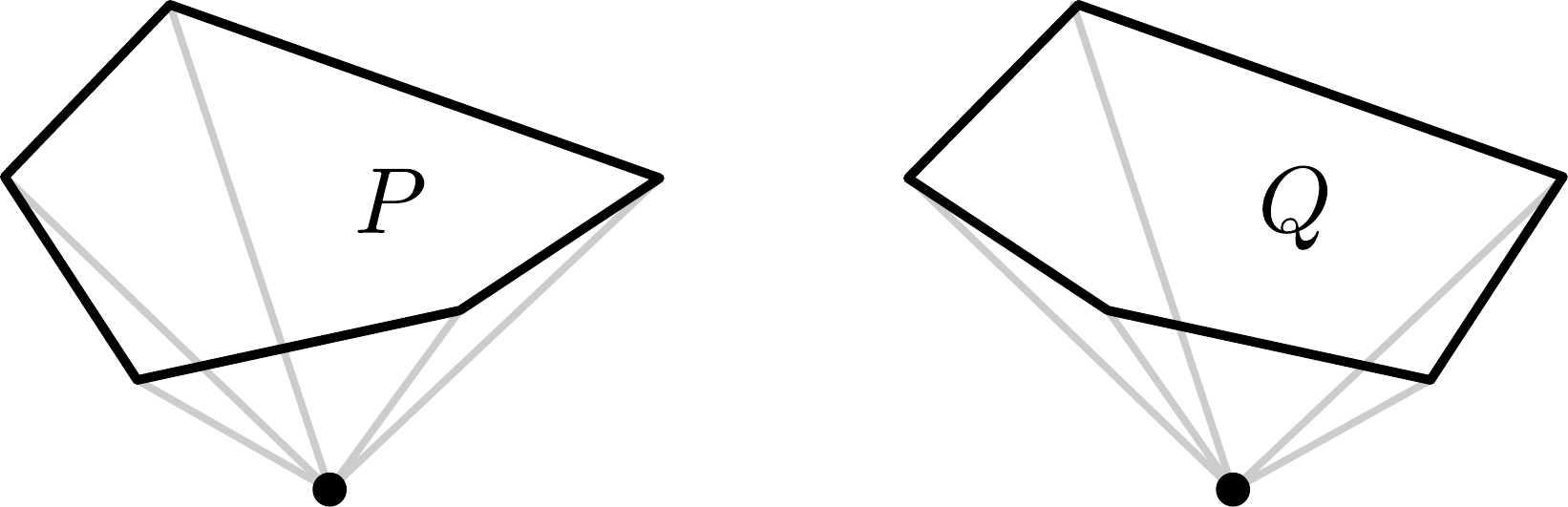}
    \caption{Two non-isometric realizations of a pentagon~with~the~same \mbox{edge lengths and vertex-point distances. This is possible~\mbox{because}~the~point} is not in the interior.}
    \label{fig:origin_outside_tnontriv_ex}
\end{figure}

We also make the following stronger conjecture:

\begin{conjecture}\label{conj:main}
Given two polytopes $P\subset\RR^d$ and $Q\subset\RR^e$ with isomorphic edge-graphs, and so that
\begin{myenumerate}
    \item $0\in\Int(Q)$,
    \item edges in $Q$ are \ul{at most as long} as their counterparts in $P$, and
    \item vertex-origin distances in $Q$ are \ul{at least as large} as their counterparts in $P$,
\end{myenumerate}
then $P\simeq Q$ ($P$ and $Q$ are isometric via an orthogonal transformations).
\end{conjecture}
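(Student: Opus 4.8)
\section*{Proof proposal}

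\textbf{Reformulation as a tensegrity.} The plan is to recast the statement as a universal‑rigidity (super‑stability) assertion. Let $G=G_P$ be the common edge‑graph, with vertex set $V$, and adjoin one extra vertex $\ast$. Form the tensegrity framework $T_P$ on $V\cup\{\ast\}$ in which every pair $\{i,j\}\in E(G)$ is a \emph{cable} with target length $\|p_i-p_j\|$ and every pair $\{\ast,i\}$ is a \emph{strut} with target length $\|p_i\|$ (recall our convention that $0$ lies in the interior of each polytope). The placement $\bar p$ given by $i\mapsto p_i$, $\ast\mapsto 0$ realizes $T_P$ with every member at its target length, and any polytope $Q$ satisfying the hypotheses of \cref{conj:main} — with its given interior point at the origin — yields a placement $\bar q$ (again $\ast\mapsto 0$) in which cables are \emph{no longer} and struts are \emph{no shorter} than in $\bar p$. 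Thus \cref{conj:main} is equivalent to: $T_P$ at $\bar p$ is universally rigid, and its only congruence type is that of $\bar p$; this is exactly what a stabilizing equilibrium stress would deliver.

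\textbf{The stabilizing stress and the energy argument.} I would search for a self‑stress $\omega$ of $T_P$, i.e.\ weights $\omega_{ij}=\omega_{ji}$ with $\sum_{j\sim i}\omega_{ij}(p_i-p_j)+\omega_{i\ast}\,p_i=0$ at each $i\in V$ and $\sum_i\omega_{i\ast}\,p_i=0$ at $\ast$, which is \emph{proper} ($\omega_{ij}\ge 0$ on the cables $E(G)$, $\omega_{i\ast}\le 0$ on the struts) and whose stress matrix $\Omega$ on $V\cup\{\ast\}$ satisfies $\Omega\succeq 0$ with $\operatorname{corank}\Omega=d+1$. Granting such an $\omega$, set $E_\omega(\bar x)=\sum_{ij\in E(G)}\omega_{ij}\|x_i-x_j\|^2+\sum_i\omega_{i\ast}\|x_i\|^2$, so that $E_\omega(\bar x)=\sum_k(\bar x^{(k)})^{\!\top}\Omega\,\bar x^{(k)}$ over the coordinate vectors. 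Equilibrium gives $E_\omega(\bar p)=0$; properness together with the cable/strut inequalities gives $E_\omega(\bar q)\le E_\omega(\bar p)=0$; and $\Omega\succeq 0$ gives $E_\omega(\bar q)\ge 0$. Hence every coordinate vector of $\bar q$ lies in $\ker\Omega$, which by the corank hypothesis is precisely the span of the all‑ones vector and the $d$ coordinate vectors of $\bar p$. Reading off the $\ast$‑component and using that $\ast$ is pinned at the origin in both placements kills the constant term, so $q_i=B p_i$ for a single linear map $B$.

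\textbf{From a linear image to an isometry.} It remains to show $B$ is column‑orthogonal. Here $B$ does not lengthen any edge vector $p_i-p_j$, does not shorten any radial vector $p_i$, and satisfies $0\in\Int\conv(Bp_i)=\Int(Q)$ — this last condition is exactly the hypothesis whose failure produces \cref{fig:origin_outside_tnontriv_ex}. One route is to verify that the member directions of $T_P$ do not lie on a conic at infinity, i.e.\ no nonzero symmetric $C$ has $(p_i-p_j)^{\!\top}C(p_i-p_j)=0$ on all edges and $p_i^{\!\top}C p_i=0$ at all vertices; then by convexity and $3$‑connectivity of $G$ (all vertices would lie on an isotropic quadric with all adjacent pairs $C$‑orthogonal, impossible for a polytope) this condition holds, $T_P$ is super stable in the sense of Connelly and Whiteley, and universal rigidity — hence $\bar q\cong\bar p$ and $P\simeq Q$ — follows. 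Alternatively one argues directly that a linear map that is nonexpanding on the edge vectors and noncontracting on the vertex vectors of a polytope with $0$ interior to the image must be orthogonal.

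\textbf{The main obstacle.} The heart of the matter, and what I expect to be the decisive difficulty, is the construction of the proper stress with $\Omega\succeq 0$ and $\operatorname{corank}\Omega=d+1$ for an \emph{arbitrary} polytope and an \emph{arbitrary} interior point. An equilibrium stress of $T_P$ with the correct sign pattern exists and is closely tied to Izmestiev's canonical (Colin–de-Verdière type) matrix of the polytope — but that matrix carries a negative eigenvalue, so the stress it produces is not semidefinite and is not the one we want. The natural candidate that respects the positivity is built instead from the Wachspress coordinates of the chosen interior point (in the spirit of the affine‑rigidity theorem announced in the abstract), and establishing semidefiniteness and the exact corank of the resulting stress matrix is precisely the step that currently goes through only in the special cases: $P$ and $Q$ centrally symmetric, $Q$ a perturbation of $P$ (where one settles for the infinitesimal/local version), and $P$ and $Q$ combinatorially equivalent.
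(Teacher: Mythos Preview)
The statement is a \emph{conjecture}; the paper does not prove it and verifies only the three special cases you list at the end. So neither you nor the paper claims a full proof. The question is whether your proposed mechanism---super-stability via a proper equilibrium stress with positive semidefinite stress matrix $\Omega$ of corank $d+1$---is the right obstacle to isolate. It is not: it is strictly too strong and fails already for the $3$-cube, where the conjecture is known to hold. For the cube tensegrity (eight vertices plus the center $\ast$), the equilibrium at each vertex $i$ forces all three incident edge stresses to equal $-\tfrac12\omega_{i\ast}$, and connectedness then forces all $\omega_{i\ast}$ to coincide. So the proper stress is unique up to scale, and its stress matrix has $\Omega_{\ast\ast}=\sum_i\omega_{i\ast}<0$; hence no proper PSD stress exists. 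Your assertion that the semidefiniteness step ``goes through'' in the centrally symmetric case is therefore incorrect, and the paper's proof of \cref{res:centrally_symmetric} does not proceed via super-stability.

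The paper's route differs in an essential way: it never asks for a PSD stress. It introduces the $\alpha$-\emph{expansion} $\|P\|_\alpha^2=\tfrac12\sum_{i,j}\alpha_i\alpha_j\|p_i-p_j\|^2$, with $\alpha$ the Wachspress coordinates of an interior point, and proves (\cref{res:expansion_main_result}) that this quantity is monotone under shortening edges. The argument uses the Izmestiev matrix $M$---whose single \emph{positive} eigenvalue makes the associated stress emphatically non-PSD---but neutralises that eigenvalue by translating $q$ so that $\sum_i z_i q_i=0$, where $z$ is the Perron--Frobenius eigenvector of $M$ (distinct from $\alpha$); this forces $\tr(MX_qX_q\T)\le 0$ without any semidefiniteness hypothesis. \cref{lem:main} then reduces the conjecture to a purely geometric condition on the Wachspress map $\phi\colon P\to\conv(q)$: one needs some $x\in\Int(P)$ with $\|\phi(x)\|\le\|x\|$. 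This, not a PSD stress, is the actual open obstacle; the three special cases are dispatched by verifying it via symmetry, continuity, and surjectivity of $\phi$ respectively. Your ``linear image to isometry'' step is also bypassed: once \cref{lem:main} applies, equality throughout gives $\|p_i\|=\|q_i\|$ for all $i$ together with the affine-equivalence clause of \cref{res:expansion_main_result}, and these combine directly to an orthogonal map without any conic-at-infinity analysis.
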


Intuitively, \cref{conj:main} states that a polytope cannot become larger (or~``more expanded'' as measured in vertex-origin distances) while its edges are getting shorter.
It is clear that \cref{conj:main_rigid_intro} is a consequence of \cref{conj:main}, and we shall call the former the ``unique reconstruction version'' of the latter.
Here, the necessity of the precondition $0\in\Int(Q)$ can be seen even quicker: vertex-origin distances~can~be increased arbitrarily by translating the polytope just far enough away from the origin (see also \cref{fig:origin_outside_trivial_ex}).

\begin{figure}[h!]
    \centering
    \includegraphics[width=0.43\textwidth]{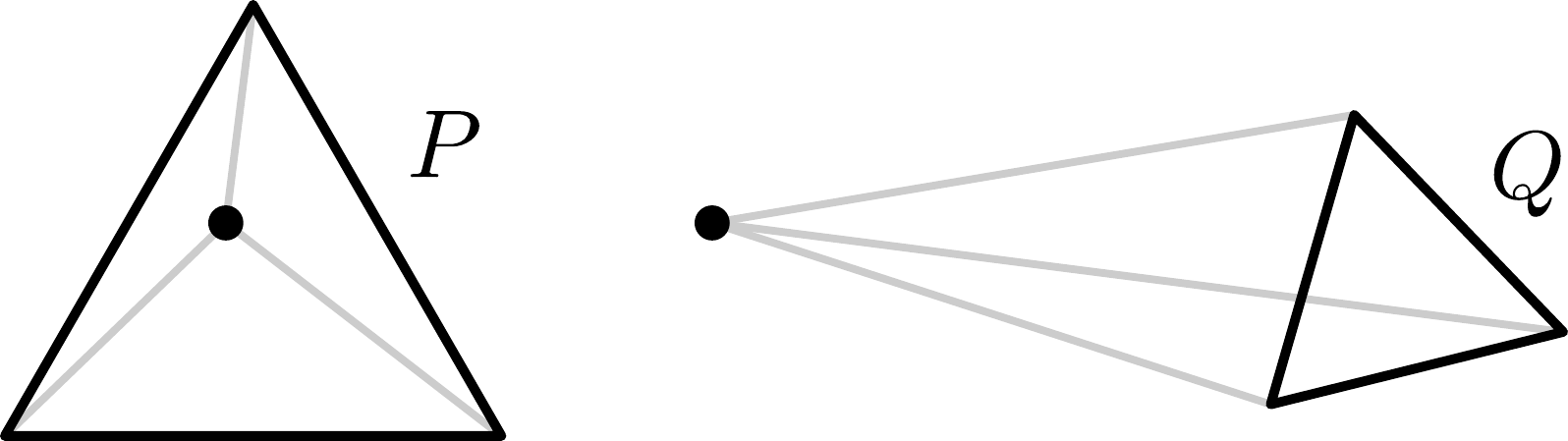}
    \caption{If $x\not\in\Int(Q)$, then it is possible for $Q$ to have shorter edges than $P$, while simultaneously also all vertices farther away from $x$.}
    \label{fig:origin_outside_trivial_ex}
\end{figure}

In this article we develop techniques that we feel confident point us the right~way towards a resolution of the conjectures.
We then verify the conjectures~in~the~follo\-wing three relevant special cases: 
\begin{itemize}
    \item $P$ and $Q$ are centrally symmetric (\cref{res:centrally_symmetric}),
    \item $Q$ is a slight perturbation of $P$ (\cref{thm:main_local}),
    \item $P$ and $Q$ are combinatorially equivalent (\cref{thm:main_comb_eq}). 
\end{itemize}
The last special case clarifies, in particular, the case of 3-dimensional polytopes.\nls
Also, our eventual formulations of the first two special cases will in fact be more general, replacing $Q$ by some embedding $q\:V(G_P)\to\RR^e$ of the edge-graph $G_P$, where~$q$ is no longer assumed to be the skeleton of any polytope.
These results can then~also be interpreted as claiming rigidity, local or universal, of certain bar-joint or tensegrity frameworks.

\subsection{Notation and terminology}
\label{sec:setting}

Throughout the article, all polytopes are convex and bounded, in particular, can be written as the convex hull of their vertices:
$$P=\conv\{p_1,...,p_n\}:=\Big\{\smash{\sum_{i}\alpha_i p_i \mid \alpha\in\Delta_n}\Big\},$$
where $\Delta_n:=\{x\in\RR^n_{\ge 0}\mid x_1+\cdots+x_n=1\}$ denotes the set of convex coefficients.

If not stated otherwise, $P\subset\RR^d$ will denote a polytope in $d$-dimensional space~for $d\ge 2$, though 
its affine hull $\aff(P)$ might be a proper subspace of $\RR^d$.
If $\dim\aff(P)=d$ we say that $P$ is \emph{full-dimensional}. 
Our polytopes are often \emph{pointed}, that is, they come with a special point $x\in\Int(P)$ (sometimes also on $\partial P$ or outside); but we usually translate $P$ so that $x$ is the origin.
So, instead of distances from the vertices to $x$, we just speak of \emph{vertex-origin distances}.

By $\F(P)$ we denote the \emph{face-lattice} of $P$, and by $\F_\delta(P)$ the subset of $\delta$-dimensional faces.
We shall assume a fixed enumeration $\F_0(P)=\{p_1,...,p_n\}$ of the polytope's vertices (\ie\ our polytopes are \emph{labelled}), in particular, the number of vertices will be denoted by $n$.
We also often use a polytope $Q\subset\RR^e$ whose vertices are denoted $\F_0(Q)=\{q_1,...,q_n\}$.

The edge-graph of $P$ is the finite simple graph $G_P=(V,E)$, where $V=\{1,...,n\}$ is compatible with the vertex labelling, that is, $i\in V$ corresponds to $p_i\in \F_0(P)$~and $ij\in E$ if and only if $\conv\{p_i,p_j\}\in\F_1(P)$.
The graph embedding given by $i\mapsto p_i$ (with edges embedded as line segments) is called \emph{(1-)skeleton} $\skel(P)$ of $P$.

When speaking of combinatorially equivalent polytopes $P$ and $Q$, we shall implicitly fix a face-lattice isomorphism $\phi \:\F(P)\xrightarrow{\sim} \F(Q)$ compatible with the vertex labels, \ie\ $\phi(p_i)=q_i$.
This also allows us to implicitly associate faces of $P$ to faces of $Q$, for example, for a face $\sigma\in\F(P)$ we can write $\sigma_Q$ for the corresponding face in $\F(Q)$.
Likewise, if $P$ and $Q$ are said to have isomorphic edge-graphs, we implicitly assume a graph isomorphism $G_P\xrightarrow{\sim}G_Q$ sending $p_i$ onto $q_i$.
We will then often say that $P$ and $Q$ have \emph{a common} edge-graph, say, $G_P$.

We write $P\simeq Q$ to denote that $P$ and $Q$ are isometric. Since our polytopes are usually suitably translated, if not stated otherwise, this isometry can be assumed~as realized by an orthogonal transformation.

Let us repeat \cref{conj:main} using our terminology:

\begin{conjectureX}{\ref{conj:main}}
Given polytopes $P\subset\RR^d$ and $Q\subset\RR^e$ with the same edge-graph $G_P=(V,E)$, so that
\begin{myenumerate}
    \item $0\in\Int(Q)$,
    \item edges in $Q$ as most as long as in $P$, \ie\
    $$\|q_i-q_j\|\le \|p_i-p_j\|,\quad\text{for all $ij\in E$},$$
    \item vertex-origin distances in $Q$ are at least as larger as in $P$, \ie\
    $$\|q_i\|\ge \|p_i\|,\quad\text{for all $i\in V$},$$
\end{myenumerate}
then $P\simeq Q$. 
\end{conjectureX}


\subsection{Structure of the article}
\label{sec:overview}

In \cref{sec:warmup} we prove the instructive special case of \cref{conj:main} where both $P$ and $Q$ are simplices.
While comparatively straightforward, the proof helps us to identify a quantity\nls --\nls we call it the \emph{expansion}~of~a~polytope -- that is at the core of a more general approach.

The goal of \cref{sec:expansion_version} is to show that the \enquote{expansion} of a polytope is monotone in its edge lengths, that is, decreases when the edge lengths shrink. 
In~fact,\nls we~verify this in the more general context that replaces $Q$ by a general embedding $q\: V(G_P)\to\RR^d$ of $P$'s edge-graph.
As a main tool we introduce~the~\emph{Wachspress coordinates} (a special class of generalized barycentric coordinates) and discuss a theo\-rem of Ivan Izmestiev.

In \cref{sec:expansion_rigidity_tensegrity} we apply these results to prove \cref{conj:main} for the three special cases: centrally symmetric, close-by and combinatorially equivalent polytopes. We also discuss the special case of inscribed polytopes.
We elaborate how our tools can potentially be used to attack \cref{conj:main}.


In \cref{sec:conclusions} we conclude our investigation with further thoughts on our results, notes on connections to the literature, as well as \emph{many} questions and future research directions.
Despite being a conclusion section, it is quite rich in content, as we found it more appropriate to gather many notes there rather than to repeatedly~interrupt the flow of~the main text.

\section{Warmup: a proof for simplices}
\label{sec:warmup}

To get acquainted with the task we discuss the instructive special case~of \cref{conj:main} where both $P$ and $Q$ are simplices. 
The proof is reasonably short~but~contains already central ideas for the general case.

\begin{theorem}\label{res:special_case_simplices}
Let $P,Q\subset \RR^d$ be two simplices so that
\begin{myenumerate}
    \item $0\in\Int(Q)$,
    \item edges in $Q$ are at most as long as in $P$, and 
    \item vertex-origin distances in $Q$ are at least as large as in $P$, 
\end{myenumerate}
then $P\simeq Q$. 
\begin{proof}
By \itm1 we can choose barycentric coordinates $\alpha\in\Int\Delta_n$ for the origin~in~$Q$, that is, $0=\sum_i\alpha_i q_i$.
Consider the following system of equalities and inequalities:
\begin{align}\notag
    \sum_i \alpha_i \|p_i\|^2
      &= \Big\|\sum_i \alpha_i p_i\Big\|^2 \!+ \tfrac12\sum_{i,j} \alpha_i\alpha_j \|p_i-p_j\|^2
    \\[-1.5ex]
    \rotatebox{90}{$\ge$} 
    \qquad
    & 
    \qquad \quad \;\,
    \rotatebox{90}{$\le$}
    \qquad \qquad \qquad \qquad\!
    \rotatebox{90}{$\le$}
    \\[-1.5ex]\notag
    \sum_i \alpha_i \|q_i\|^2
      &= \Big\|\sum_i \alpha_i q_i\Big\|^2 \!+ \tfrac12\sum_{i,j} \alpha_i\alpha_j \|q_i-q_j\|^2
\end{align}
\label{eq:simplex_case}%
The equalities of the first and second row can be verified by rewriting the norms as inner products followed by a straightforward computation. 
The vertical inequalities follow, from left to right, using \itm3, the definition of $\alpha$, and \itm2 respectively.

But considering this system of (in)equalities, we must conclude that all inequalities are actually satisfied with equality.
In particular, equality in the right-most terms yields $\|p_i-p_j\|=\|q_i-q_j\|$ for all $i,j\in V(G_P)$ (here we are using $\alpha_i>0$).\nlspace%
But sets of points with pairwise identical distances are isometric.
\end{proof}
\end{theorem}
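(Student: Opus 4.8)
The plan is to leverage the one feature that distinguishes a simplex from a general polytope: \emph{every} pair of vertices spans an edge, so hypothesis \itm2 controls all $\binom{n}{2}$ pairwise vertex distances, not merely a distinguished subset of them. First I would invoke \itm1: since $0\in\Int(Q)$ and $Q$ is a simplex, the origin has a unique representation $0=\sum_i\alpha_i q_i$ with barycentric coordinates $\alpha\in\Delta_n$ and, crucially, $\alpha_i>0$ for every $i$. I would then reuse exactly these weights $\alpha$ on the vertices of $P$, even though $\sum_i\alpha_i p_i$ is a priori just some point of $P$ and need not be the origin.

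The engine is the ``variance identity'': for any points $x_1,\dots,x_n$ in a Euclidean space and any $\alpha\in\Delta_n$,
\[
   \sum_i \alpha_i\|x_i\|^2 \;=\; \Big\|\sum_i\alpha_i x_i\Big\|^2 + \tfrac12\sum_{i,j}\alpha_i\alpha_j\|x_i-x_j\|^2,
\]
obtained by expanding each norm as an inner product and using $\sum_i\alpha_i=1$. Applying it to the $p_i$ and to the $q_i$ produces two decompositions, each of the form ``squared norm of an $\alpha$-weighted centroid'' plus ``$\alpha$-weighted mean of squared pairwise distances''. I would then compare the three pieces: by \itm3 the left-hand sides satisfy $\sum_i\alpha_i\|q_i\|^2\ge\sum_i\alpha_i\|p_i\|^2$; by the choice of $\alpha$ the centroid term of $Q$ is $0$ while that of $P$ is nonnegative; and by \itm2, since all pairs of vertices are edges, the pairwise-distance term of $P$ dominates that of $Q$ summand by summand. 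Concatenating these three comparisons closes a loop: $\sum_i\alpha_i\|q_i\|^2\ge\sum_i\alpha_i\|p_i\|^2=(\text{centroid}_P)+(\text{pairs}_P)\ge 0+(\text{pairs}_Q)=\sum_i\alpha_i\|q_i\|^2$.

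A closed chain of inequalities forces every one of them to be an equality. Equality in the pairwise-distance term, together with $\alpha_i\alpha_j>0$, yields $\|p_i-p_j\|=\|q_i-q_j\|$ for \emph{all} $i,j\in V$; and equality in the centroid term gives $\sum_i\alpha_i p_i=0=\sum_i\alpha_i q_i$, so that any isometry realizing the pairwise distances also respects this affine combination and hence fixes the origin, i.e.\ is orthogonal. Since a finite point configuration is determined up to isometry by its pairwise distances, this gives $P\simeq Q$. I expect the only delicate point — and the reason this clean argument is confined to simplices — to be the summand-by-summand comparison of the two pairwise-distance sums: it genuinely needs that no non-edge pairs exist, so that \itm2 is not merely an \emph{edge} condition. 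For a general polytope one would instead have to show that $\tfrac12\sum_{i,j}\alpha_i\alpha_j\|p_i-p_j\|^2$ still dominates its $Q$-analogue \emph{on average}, even though non-edge distances are free to grow; that ``expansion monotonicity'' is precisely what the later sections are built to supply, and it is what is absent here.
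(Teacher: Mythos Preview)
Your proof is correct and follows essentially the same route as the paper: choose barycentric coordinates of the origin in $Q$, apply the variance identity to both configurations, compare the three resulting terms via \itm3, the choice of $\alpha$, and \itm2, and conclude that the forced equalities yield $\|p_i-p_j\|=\|q_i-q_j\|$ for all $i,j$. Your additional remark that equality in the centroid term forces $\sum_i\alpha_i p_i=0$, and hence that the resulting isometry is orthogonal, is a nice clarification that the paper's proof leaves implicit.
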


Why can't we apply this proof to general polytopes?
The right-most sum in~\eqref{eq:simplex_case} iterates over all vertex pairs and measures, if you will, a weighted average of pairwise vertex distances in $P$.
In simplices each~vertex pair forms an edge, and hence, if all edges decrease in length, this average de\-creases as well.
In general polytopes however, when edge become shorter, some \enquote{non-edge vertex distances} might still~increase, and so the right-most inequality cannot be obtained in the same term-wise fashion. 
In fact, there is no reason to expect that the inequality holds at all.

It should then be surprising to learn that it actually does hold, at least in some controllable circumstances that we explore in the next section.
This will allow us to generalize \cref{res:special_case_simplices} beyond simplices.

\section{$\alpha$-expansion, Wachspress coordinates and the Izmestiev matrix}
\label{sec:expansion_version}

Motivated by the proof of the simplex case (\cref{res:special_case_simplices}) we define the following measure of size for a polytope (or graph embedding $p\: V(G_P)\to\RR^d$):

\begin{definition}
For $\alpha\in\Delta_n$ the \emph{$\alpha$-expansion} of $P$ is
$$\|P\|_\alpha^2 := \tfrac12\sum_{i,j} \alpha_i\alpha_j \|p_i-p_j\|^2.$$
\end{definition}

The sum in the definition iterates over all pairs of vertices and so the $\alpha$-expansion measures a weighted average of vertex distances, in particular, $\|P\|_\alpha$ is a translation invariant measure.
If all pairwise distances between vertices decrease, so does the~$\alpha$-expansion. 

The surprising fact, and main result of this section (\cref{res:expansion_main_result}), is that for a carefully chosen $\alpha\in\Delta_n$ the $\alpha$-expansion decreases already if only the edge lengths decrease, independent of what happens to other vertex distances.

In fact, this statement holds true in much greater generality and we state it already here (it mentions \emph{Wachspress coordinates} which we define in the next section; one should read this as \enquote{there exist $\alpha\in\Delta_n$ so that ...}):

\begin{theorem}\label{res:expansion_main_result}
Let $P\subset\RR^d$ be a polytope with edge-graph $G_P=(V,E)$ and let~$\alpha\in\Delta_n$ be the Wachspress coordinates of some interior point $x\in\Int(P)$.
If $q\: V$ $\to\RR^e$ is some embedding of $G_P$ whose edges are at most as long as in $P$, then
$$\|P\|_\alpha\ge \|q\|_\alpha,$$
with equality if and only if $q$ is an affine transformation of the skeleton $\skel(P)$,\nlspace all~ed\-ges of which are of the same length as in $P$.
\end{theorem}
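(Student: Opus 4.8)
## Proof Plan

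The plan is to reduce the statement to a quadratic-form inequality governed by a single positive semidefinite matrix — the Izmestiev matrix — attached to $P$ and its interior point $x$. First I would recall the key property of Wachspress coordinates $\alpha \in \Delta_n$: they are the unique generalized barycentric coordinates arising from the Wachspress weight functions, and there is a symmetric matrix $M = M(P,x)$ (the Izmestiev matrix, from Izmestiev's work on the Maxwell–Cremona correspondence / colin de Verdière-type matrices of polytopes) whose off-diagonal entries $M_{ij}$ are nonzero only on edges $ij \in E$, which satisfies $M \mathbf{1} = 0$ suitably weighted, is positive semidefinite of corank $d+1$ on the relevant space, and whose kernel is spanned by the coordinate functions of $\skel(P)$ together with the constant vector. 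The point of introducing $M$ is that the $\alpha$-expansion can be written as a quadratic form: $\|P\|_\alpha^2 = -\tfrac12 \sum_{ij} M_{ij} \|p_i - p_j\|^2$ up to the right normalization, or more precisely $\|P\|_\alpha^2$ equals a trace expression $\langle M, G_P \rangle$ where $G_P$ is the Gram-type matrix of the $p_i$; the essential feature is that only the \emph{edge} distances enter, because $M$ is supported on edges.

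Next I would set up the comparison. For the embedding $q \colon V \to \RR^e$, define the analogous quantity using the \emph{same} matrix $M$ (this is legitimate precisely because $M$ is supported on edges, so $\sum_{ij \in E} M_{ij}\|q_i - q_j\|^2$ makes sense). Since $M_{ij}$ has a definite sign on edges — here one needs the fact, from Izmestiev's theorem, that the edge entries of $M$ are negative (equivalently the diagonal dominance / discrete-Laplacian structure) — and since $\|q_i - q_j\| \le \|p_i - p_j\|$ on every edge, term-by-term comparison gives $\sum_{ij\in E} M_{ij}\|q_i - q_j\|^2 \ge \sum_{ij \in E} M_{ij}\|p_i - p_j\|^2$, hence $\|q\|_\alpha^{2,M} \le \|P\|_\alpha^2$ for the $M$-weighted expansion. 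The remaining step is to show that the genuine $\alpha$-expansion $\|q\|_\alpha^2$ (weighted by $\alpha_i\alpha_j$ over \emph{all} pairs) is bounded above by this $M$-weighted quantity. This is where positive semidefiniteness of $M$ is used: writing $\|q\|_\alpha^2$ in terms of the Gram matrix of the $q_i$ and using that $M \succeq 0$ with the prescribed kernel, one gets $\|q\|_\alpha^2 \le \langle M, \text{Gram}(q)\rangle$, with equality iff the columns of the configuration $q$ lie in the kernel of $M$ — i.e.\ iff $q$ is an affine image of $\skel(P)$. Chaining the two inequalities yields $\|q\|_\alpha \le \|P\|_\alpha$.

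For the equality case: equality in the second (PSD) step forces $q$ to be an affine transformation of $\skel(P)$; equality in the first (term-wise) step then forces $\|q_i - q_j\| = \|p_i - p_j\|$ on every edge, since each edge entry $M_{ij}$ is strictly negative. An affine map that preserves all edge lengths of $\skel(P)$ — and $\skel(P)$ affinely spans $\aff(P)$ since $P$ is a full-dimensional polytope in its affine hull — must be an isometry on $\aff(P)$, so $q$ and $\skel(P)$ have identical edge lengths as claimed. I would be careful to state that ``affine transformation'' here is meant in the sense allowed by the theorem, and that the edge-length-preserving conclusion is exactly the content of the ``with equality iff'' clause.

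The main obstacle, and the part that carries the real weight, is establishing and correctly invoking the Izmestiev matrix $M$ with all four properties simultaneously: (a) support on the edge-graph, (b) correct negativity/sign of edge entries, (c) positive semidefiniteness, and (d) kernel exactly $\Span\{\skel(P)\text{-coordinates}, \mathbf 1\}$ — and pinning down the precise normalization linking $M$ to the Wachspress coordinates $\alpha$ so that the identity $\|P\|_\alpha^2 = \langle M, \text{Gram}\rangle$ holds on the nose. This is the content of ``a theorem of Ivan Izmestiev'' advertised in the section header, and everything else is bookkeeping: rewriting squared norms as inner products, the $\tfrac12\sum_{i,j}$ algebra already rehearsed in the simplex warmup, and tracking when the inequalities are tight. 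I would expect the bulk of the section preceding this theorem to be devoted to constructing $M$ and verifying these properties, after which the proof above is short.
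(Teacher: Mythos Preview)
Your high-level strategy is exactly the paper's: introduce the Izmestiev matrix $M$ supported on the edges, rewrite the $\alpha$-expansion as a sum over edges via $M$, and then compare term by term. However, the specific spectral structure you assume for $M$ is wrong, and this is not a sign-convention issue --- it is precisely the place where the proof has nontrivial content.

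In the paper's convention $M_{ij}>0$ on edges, $\dim\ker M = d$ with $\ker M=\Span X_P$ (the coordinate columns only, \emph{not} including the constant vector --- indeed $M\mathbf 1=\alpha\neq 0$), and $M$ has a \emph{unique positive eigenvalue} $\theta_1$ of multiplicity one, with all remaining nonzero eigenvalues negative. So $M$ is \emph{not} semidefinite in either sign; it has signature $(1,d,n-d-1)$. Your claimed inequality ``$\|q\|_\alpha^2\le\langle M,\mathrm{Gram}(q)\rangle$ from PSD of $M$'' therefore fails as stated. The paper's decomposition is
\[
\|q\|_\alpha^2 \;=\; \sum_{ij\in E} M_{ij}\|q_i-q_j\|^2 \;-\;\Bigl\|\sum_i\alpha_i q_i\Bigr\|^2 \;+\;\tr(MX_qX_q^{\top}),
\]
and the delicate term is $\tr(MX_qX_q^{\top})$: one needs it to be $\le 0$, which is false for arbitrary $q$ because of the positive eigenvalue $\theta_1$. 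The paper's fix is to exploit translation-invariance of $\|\cdot\|_\alpha$ and translate $q$ so that $\sum_i z_i q_i=0$, where $z\in\Int(\Delta_n)$ is the Perron--Frobenius eigenvector to $\theta_1$; this kills the $\theta_1$-component of $X_q$ and forces $\tr(MX_qX_q^{\top})=\sum_{k\ge 2}\theta_k\,\tr((X_q^k)^{\top}X_q^k)\le 0$. Simultaneously one translates $P$ so that $x=0$, making $\sum_i\alpha_i p_i=0$ and $\tr(MX_PX_P^{\top})=0$. Your proposal misses both the extra positive eigenvalue and the Perron--Frobenius translation that neutralizes it, and also omits the middle term $-\|\sum_i\alpha_i q_i\|^2$ (which is generally nonzero after the $z$-translation, but has the right sign). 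Once you incorporate these, your outline becomes the paper's proof; the equality analysis you sketch then goes through as written, since equality in the trace term forces $\Span X_q\subseteq\ker M=\Span X_P$.
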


Indeed, \cref{res:expansion_main_result} is not so much about comparing $P$ with another polytope, but actually about comparing the skeleton $\skel(P)$ with some other graph embedding $q$ that might not be the skeleton of any polytope and might even be embedded in a lower- or higher-dimensional Euclidean space.
Morally, \cref{res:expansion_main_result} says: \emph{polytope skeleta are~maximally expanded for their edge lengths}, where \enquote{expansion}~here measures~an average of vertex distances with carefully chosen weights. 



The result clearly hinges on the existence of these so-called \emph{Wachspress coordinates}, which we introduce now.

\subsection{Wachspress coordinates}
\label{sec:Wachspress_Izmestiev}

In a simplex $P\subset\RR^d$ each point $x\in P$ can~be~expressed as a convex combination of the simplex's vertices in a unique way:
$$x=\sum_i \alpha_i p_i.$$
The coefficients $\alpha\in\Delta_n$ are called the \emph{barycentric coordinates} of $x$ in $P$.

In a general polytope $P\subset\RR^d$ there are usually many ways to express a point~$x\in P$ as a convex combination of the polytope's vertices.
In many applications however it is desirable to have a canonical choice, so to say \enquote{generalized barycentric~coordinates}.
Various such coordinates have been defined (see \cite{floater2015generalized} for an overview), one of them being the so-called \emph{Wachspress coordinates}.
Those were initially defined by Wachspress for polygons \cite{wachspress1975rational}, and later generalized to general polytopes by Warren et al.~\cite{warren1996barycentric,warren2007barycentric}. 
A construction, with a geometric interpretation due to  \cite{ju2005geometric}, is given in \cref{sec:relation_Wachspress_Izmestiev} below.

The relevance of the Wachspress coordinates for our purpose is however not so much in their precise definition, but rather in their relation to a polytope invariant of \enquote{higher rank} that we introduced next. 

\subsection{The Izmestiev matrix}

At the core of our proof of \cref{res:expansion_main_result} is the observation that the Wachspress coordinates are merely a shadow of a \enquote{higher rank} object that we call the \emph{Izmestiev matrix} of $P$; an $(n\x n)$-matrix associated~to~an~$n$-vertex polytope with $0\in\Int(P)$, whose existence and properties in connection with graph skeleta were established by Lovász in dimension three \cite{lovasz2001steinitz}, and by Izmestiev in general dimension \cite{izmestiev2010colin}. 
We summarize the findings:



\begin{theorem}
\label{res:Izmestiev}
Given a polytope $P\subset\RR^d$ with $0\in\Int(P)$ and edge-graph~$G_P=(V,$ $E)$,
there exists a symmetric matrix $M\in\RR^{n\x n}$ (the Izmestiev matrix of $P$) with~the following properties:
\begin{myenumerate}
    \item $M_{ij}>0$ if $ij\in E$,
    \item $M_{ij}=0$ if $i\not=j$ and $ij\not\in E$,
    \item $\dim\ker M=d$,
    \item $MX_P=0$, where $X_P\T=(p_1,...,p_n)\in\RR^{d\x n}$, and
    \item $M$ has a unique positive eigenvalue (of multiplicity one).
\end{myenumerate}
\end{theorem}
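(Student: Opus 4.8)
The plan is to recover Izmestiev's construction of $M$ as the Hessian of a volume function and to read off the five properties from elementary facts about polar duals, Perron--Frobenius theory, and the Brunn--Minkowski inequality; throughout I assume $P$ is full-dimensional (otherwise work inside $\aff P$). Since $0\in\Int(P)$, the polar dual $P^\circ=\{y:\langle p_i,y\rangle\le 1\text{ for all }i\}$ is a bounded $d$-polytope whose facets $F_i^\circ$ (lying in $\{\langle p_i,\cdot\rangle=1\}$) biject with the $p_i$, and whose ridges $R_{ij}=F_i^\circ\cap F_j^\circ$ biject with the edges $ij\in E$. For $t\in\RR^n$ near $0$ put $P^\circ(t):=\{y:\langle p_i,y\rangle\le 1+t_i\ \text{for all }i\}$ and $V(t):=\vol(P^\circ(t))$; this is a piecewise polynomial of degree $d$, twice continuously differentiable near $t=0$ (immediate if $P^\circ$ is simple, and in general part of Izmestiev's analysis), and I set $M:=\operatorname{Hess}V(0)$.

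\emph{Entries and kernel.} Pushing the hyperplane $\{\langle p_i,\cdot\rangle=1+t_i\}$ outward at unit rate in $t_i$ moves it at distance rate $1/\|p_i\|$, so $\partial_i V(t)=\vol_{d-1}(F_i^\circ(t))/\|p_i\|$ and hence $M_{ij}=\|p_i\|^{-1}\partial_j\vol_{d-1}(F_i^\circ)$. For $i\ne j$ with $ij\notin E$ the facets $F_i^\circ,F_j^\circ$ are non-adjacent, so perturbing $t_j$ does not move $F_i^\circ$ to first order and $M_{ij}=0$ (property (ii)); for $ij\in E$ a local computation along the ridge gives the manifestly symmetric positive quantity $\vol_{d-2}(R_{ij})/(\|p_i\|\|p_j\|\sin\theta_{ij})$, with $\theta_{ij}$ the angle between $p_i$ and $p_j$, so $M_{ij}>0$ (property (i)). For the kernel, write $X_P\in\RR^{n\times d}$ with rows $p_i\T$, so $(X_Pv)_i=\langle p_i,v\rangle$; directly from the halfspace description, $P^\circ(t+X_Pv)=P^\circ(t)+v$, hence $V$ is invariant under translations of $t$ by $\im X_P$, hence so is $\nabla V$, hence $\operatorname{Hess}V(0)$ annihilates $\im X_P$: this is property (iv), $MX_P=0$, and since $X_P$ is injective it yields $\dim\ker M\ge d$.

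\emph{The eigenvalue count.} First, $M$ is not negative semidefinite: the scaling direction satisfies $P^\circ(s\mathbf 1)=(1+s)P^\circ$, so $V(s\mathbf 1)=(1+s)^dV(0)$ and $\mathbf 1\T M\mathbf 1=d(d-1)\vol(P^\circ)>0$ (using $d\ge 2$). So $M$ has at least one positive eigenvalue, and since its positive off-diagonal support is that of the connected graph $G_P$, applying Perron--Frobenius to $M+cI$ for $c\gg 0$ shows the top eigenvalue is simple with a strictly positive eigenvector. Second, $M$ has at most one positive eigenvalue: from the halfspace description, $\lambda P^\circ(t)+(1-\lambda)P^\circ(t')\subseteq P^\circ(\lambda t+(1-\lambda)t')$, so by monotonicity of volume and Brunn--Minkowski $t\mapsto V(t)^{1/d}$ is concave near $0$; expanding $\operatorname{Hess}(V^{1/d})(0)\preceq 0$ gives $M\preceq \tfrac{d-1}{d\,V(0)}\nabla V(0)\nabla V(0)\T$, a rank-one positive semidefinite matrix, and no symmetric matrix dominated by a rank-one PSD matrix can be positive definite on a $2$-plane (every $2$-plane meets the kernel of the rank-one form). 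Combining the two observations gives exactly one positive eigenvalue, simple --- property (v).

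\emph{The exact kernel, and the hard part.} It remains to improve $\dim\ker M\ge d$ to equality. Write $M=c_1A+c_2\,ww\T$ with $A=\operatorname{Hess}(V^{1/d})(0)\preceq 0$, $w=\nabla V(0)$, and $c_1,c_2>0$. If $Mv=0$ then $0=v\T Mv=c_1v\T Av+c_2(w\T v)^2$ forces $v\in\ker A$ and $w\T v=0$, so $\ker M\subseteq\ker A\cap w^\perp$. Now $\ker A$ contains $\mathbf 1$ (because $V(s\mathbf 1)^{1/d}$ is affine in $s$) and $\im X_P$ (by the second paragraph), and equals exactly this $(d+1)$-dimensional span precisely by the rigidity case of Brunn--Minkowski (Minkowski's uniqueness of homothets: the only second-order volume-flat deformations of $P^\circ$ among polytopes with the prescribed facet normals are the scalings and translations). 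Finally $\langle\nabla V(0),X_Pv\rangle=0$ by translation invariance of volume, while $\langle\nabla V(0),\mathbf 1\rangle=\sum_i\vol_{d-1}(F_i^\circ)/\|p_i\|>0$, so $w^\perp$ cuts the $(d+1)$-space $\ker A$ down to a $d$-space containing $\im X_P$, hence equal to it; thus $\ker M=\im X_P$ is exactly $d$-dimensional (property (iii)), and the remaining $n-d-1$ eigenvalues are negative. The genuine obstacles are the two places where deeper input was quietly used: the twice-differentiability of $V$ at $t=0$ when $P^\circ$ is not simple (where I would lean on \cite{izmestiev2010colin}, and on \cite{lovasz2001steinitz} for $d=3$), and --- the real crux of property (iii) --- making rigorous the infinitesimal version of the Brunn--Minkowski rigidity statement, that is, that second-order volume-preserving deformations of $P^\circ$ are exactly translations; absent a clean infinitesimal formulation one falls back on Izmestiev's direct nondegeneracy argument for $M$ transverse to $\im X_P$.
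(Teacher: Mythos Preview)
The paper does not prove this theorem: it is stated as a summary of results due to Lov\'asz (in dimension three) and Izmestiev (in general), with the construction recalled in \cref{sec:relation_Wachspress_Izmestiev} and the spectral properties referred to \cite{izmestiev2010colin} and the appendix of \cite{narayanan2021spectral}. Your construction of $M$ as the Hessian of $t\mapsto\vol(P^\circ(t))$ at $t=0$ is exactly the one the paper records (equations \eqref{eq:Taylor_definition}--\eqref{eq:geometric_definition}), and your derivations of \itm1, \itm2, \itm4 via the geometric formula and the translation identity $P^\circ(t+X_Pv)=P^\circ(t)+v$ are the standard ones.

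Your treatment of \itm5 via Brunn--Minkowski is clean and correct: from $M\preceq \tfrac{d-1}{dV(0)}\,ww\T$ one indeed gets at most one positive eigenvalue counted with multiplicity, so the Perron--Frobenius step is redundant (though not wrong). For \itm3 your decomposition $M=c_1A+c_2\,ww\T$ and the deduction $\ker M\subseteq\ker A\cap w^\perp$ are fine. The substantive content you flag is exactly right: the two genuine inputs are (a) second differentiability of $V$ at $t=0$ when $P^\circ$ is non-simple, and (b) the identification $\ker A=\Span\{\mathbf 1\}\oplus\im X_P$, which is precisely the equality/rigidity analysis in the Alexandrov--Fenchel/Minkowski setting that Izmestiev carries out. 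Since you defer both to \cite{izmestiev2010colin}, your write-up is an accurate high-level account of that proof rather than an independent argument; this matches how the paper itself handles the theorem.
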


Izmestiev provided an explicit construction of this matrix that we discuss in~\cref{sec:relation_Wachspress_Izmestiev} below.
Another concise proof of the spectral properties of the Izmestiev~matrix can be found in the appendix of \cite{narayanan2021spectral}.


\begin{observation}\label{res:Izmestiev_observations}
Each of the properties \itm1 to \itm5 of the Izmestiev matrix will~be crucial for proving \cref{res:expansion_main_result} and we shall elaborate on each point below:
\begin{myenumerate}
    \item \cref{res:Izmestiev} \itm1 and \itm2 state that $M$ is some form of generalized adjacency matrix, having non-zero off-diagonal entries if and only if the polytope has an edge between the corresponding vertices.
    Note however that the theorem tells nothing directly about the diagonal entries of $M$.
    \item \cref{res:Izmestiev} \itm3 and \itm4 tell us precisely how the kernel of $M$ looks like, namely, $\ker M=\Span X_P$. The inclusion $\ker M\supseteq \Span X_P$ follows directly from \itm4. But since $P$ has at least one interior point (the origin) it must be a full-dimensional polytope, meaning that $\rank X_P=d$. Comparison of dimensions (via \itm 3) yields the claimed equality.
    \item let $\{\theta_1 > \theta_2 > \cdots > \theta_m\}$ be the spectrum of $M$. \cref{res:Izmestiev} \itm5 then tells us that $\theta_1>0$, $\theta_2=0$ and $\theta_k<0$ for all $k\ge 3$.
    \item $M':= M+\gamma\Id$ is a non-negative matrix if $\gamma>0$ is sufficiently large,\nlspace and is then subject to the \emph{Perron-Frobenius theorem} (see \cref{res:Perron_Frobenius}). Since the edge-graph $G_P$ is connected, the matrix $M'$ is \emph{irreducible}.
    The crucial information provided by the Perron-Frobenius theorem is that $M'$ has an eigenvector $z\in\RR^n$ to its largest eigenvalue (that is, $\theta_1+\gamma$), all entries~of which are positive.
    By an appropriate scaling we can assume $z\in\Int(\Delta_n)$, which is a $\theta_1$-eigenvector to the Izmestiev matrix $M$, and in fact, spans its $\theta_1$-eigenspace.
\end{myenumerate}
\end{observation}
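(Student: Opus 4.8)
The statement is a list of four elaborations on \cref{res:Izmestiev}, and accordingly the plan is to dispatch each one in turn; everything follows from \cref{res:Izmestiev} combined with elementary linear algebra and the Perron--Frobenius theorem, so the work lies purely in assembling standard facts in the right order. Item \itm1 requires nothing beyond observing that \cref{res:Izmestiev}~\itm1 and~\itm2 together say precisely that, for $i\neq j$, one has $M_{ij}\neq 0$ if and only if $ij\in E$, while neither hypothesis constrains the diagonal entries $M_{ii}$.

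For \itm2 I would argue by a dimension count. Since $0\in\Int(P)$, the polytope is full-dimensional, so $\aff(P)=\RR^d$; as $0\in\aff(P)$ this affine span is a linear subspace and hence equals $\Span\{p_1,\dots,p_n\}=\RR^d$, giving $\rank X_P=d$, i.e. $\dim\Span X_P=d$. By \cref{res:Izmestiev}~\itm4 we have $MX_P=0$, so $\Span X_P\subseteq\ker M$; since $\dim\ker M=d$ by \cref{res:Izmestiev}~\itm3, this inclusion is forced to be an equality, so $\ker M=\Span X_P$. For \itm3, recall that $M$ is symmetric, hence has real spectrum, which we list as distinct values $\theta_1>\cdots>\theta_m$. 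By \cref{res:Izmestiev}~\itm5 exactly one eigenvalue is positive and it is simple, so $\theta_1>0$ and $\theta_k\le 0$ for all $k\ge 2$; and since $0$ is an eigenvalue (its eigenspace is $\ker M$, of dimension $d\ge 2$), $0$ must appear among $\theta_2,\dots,\theta_m$, and being the largest among the non-positive eigenvalues it equals $\theta_2$, whence $\theta_k<0$ for $k\ge 3$.

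For \itm4 I would fix $\gamma>0$ large enough that $M':=M+\gamma\Id$ is entrywise non-negative: its off-diagonal entries coincide with those of $M$ and are non-negative by \cref{res:Izmestiev}~\itm1 and~\itm2, while the diagonal entries $M_{ii}+\gamma$ become non-negative once $\gamma\ge\max_i(-M_{ii})$. The matrix $M'$ is symmetric, non-negative, and irreducible: its support graph contains $G_P$ because $M'_{ij}=M_{ij}>0$ whenever $ij\in E$, and $G_P$ is connected, being the edge-graph of a polytope. Applying the Perron--Frobenius theorem (\cref{res:Perron_Frobenius}), the spectral radius $\rho(M')$ is a simple eigenvalue admitting a strictly positive eigenvector $z$. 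The eigenvalues of $M'$ are exactly the numbers $\theta_i+\gamma$, all positive for $\gamma$ large, so $\rho(M')=\max_i(\theta_i+\gamma)=\theta_1+\gamma$. Rescaling $z$ so that $\sum_i z_i=1$ puts $z\in\Int(\Delta_n)$, and $M'z=(\theta_1+\gamma)z$ rearranges to $Mz=\theta_1 z$; finally, simplicity of $\theta_1+\gamma$ as an eigenvalue of $M'$ means $\theta_1$ is a simple eigenvalue of $M$, so $z$ spans its $\theta_1$-eigenspace.

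The only point demanding a little care is the identification $\rho(M')=\theta_1+\gamma$ in \itm4: one must choose $\gamma$ large enough to push the \emph{entire} spectrum of $M$ above zero, so that the largest shifted eigenvalue also has the largest modulus and is therefore the Perron eigenvalue; a secondary point is recalling the connectedness of $G_P$ needed for irreducibility. Neither is a genuine obstacle, but both are needed for the argument to be airtight.
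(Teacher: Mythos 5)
Your proposal is correct and follows essentially the same reasoning the paper embeds in the observation itself: the inclusion-plus-dimension-count for the kernel, the placement of $0$ as $\theta_2$ via the unique positive eigenvalue, and the shift-by-$\gamma\Id$ plus Perron--Frobenius argument for the positive eigenvector $z\in\Int(\Delta_n)$. The extra care you take to push the whole spectrum positive so that $\theta_1+\gamma$ is the eigenvalue of largest modulus is a sensible precaution, though with the paper's formulation of Perron--Frobenius (which speaks of the largest eigenvalue of a non-negative irreducible symmetric matrix) it is not strictly needed.
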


Note that the properties \itm1 to \itm5 in \cref{res:Izmestiev} are invariant under scaling~of $M$ by a \emph{positive} factor.
As we verify in \cref{sec:relation_Wachspress_Izmestiev} below, $\smash{\sum_{i,j}M_{ij}>0}$,\nlspace and so we can fix the convenient normalization $\smash{\sum_{i,j}M_{ij}=1}$.
In fact, with this~normalization in place we can now reveal that the Wachspress coordinates emerge simply as the row sums of $M$:
\begin{equation}
\label{eq:Wachspress_is_Izmestiev_row_sum}
\alpha_i:=\sum_j M_{ij}, \quad\text{for all $i\in\{1,...,n\}$}.
\end{equation}
This connection has previously been observed in \cite[Section 4.2]{ju2005geometric} for 3-dimensional polytopes, and we shall verify the general case in the next section (\cref{res:Wachspress_is_Izmestiev_row_sum}).

\subsection{The relation between Wachspress and Izmestiev}
\label{sec:relation_Wachspress_Izmestiev}

The Wachspress coordinates and the Izmestiev matrix can be defined simultaneously in a rather elegant fashion: given a polytope $P\subset\RR^d$ with $d\ge 2$ and~$0\in\Int(P)$, as well as a vector~$\mathbf c=$ $(c_1,...,c_n)\in\RR^n$, consider the \emph{generalized polar dual} 
$$P^\circ(\mathbf c):=\big\{x\in\RR^d\mid \<x,p_i\>\le c_i\text{ for all $i\in V(G_P)$}\big\}.$$
We have that $P^\circ(\mathbf 1)$ with $\mathbf 1=(1,...,1)$ is the usual polar dual.
The (unnormalized) Wachspress coordinates $\tilde\alpha\in\RR^n$ of the origin and the (unnormalized) Izmestiev~matrix $\tilde M$ $\in$ $\RR^{n\x n}$ emerge as the coefficients in the Taylor expansion~of~the volume of $P^\circ(\mathbf c)$ at $\mathbf c=\mathbf 1$:
%
\begin{equation}\label{eq:Taylor_definition}
\vol\!\big(P^\circ(\mathbf c)\big)=\vol(P^\circ)+\<\mathbf c-\mathbf 
1,\tilde\alpha\>+\tfrac12(\mathbf c-\mathbf 1)\T\!\tilde M(\mathbf c-\mathbf 1)+\cdots.
\end{equation}
In other words,
\begin{equation}\label{eq:variational_definition}
\tilde\alpha_i := \frac{\partial\vol(P^\circ(\mathbf c))}{\partial c_i}\Big|_{\mathbf c=\mathbf 1}\quad\text{and}\quad
\tilde M_{ij} := \frac{\partial^2\vol(P^\circ(\mathbf c))}{\partial c_i\partial c_j}\Big|_{\mathbf c=\mathbf 1}.
\end{equation}
%
In this form, one might recognize the~(unnor\-malized) Izmes\-tiev matrix of $P$ as the \emph{Alexandrov ma\-trix} of the polar dual $P^\circ$.

Geometric interpretations for \eqref{eq:variational_definition} were given in \cite[Section 3.3]{ju2005geometric} and \cite[proof of Lemma 2.3]{izmestiev2010colin}: for a vertex $p_i\in \F_0(P)$ let $F_i\in\F_{d-1}(P^\circ)$ be~the corresponding dual~facet.
Likewise, for an edge $e_{ij}\in\F_1(P)$ let $\sigma_{ij}\in\F_{d-2}(P^\circ)$ be the corresponding dual face of codimension 2.
Then
\begin{equation}\label{eq:geometric_definition}
\tilde\alpha_i = \frac{\vol(F_i)}{\|p_i\|}\quad\text{and}\quad \tilde M_{ij}=\frac{\vol(\sigma_{ij})}{\|p_i\|\|p_j\|\sin\sphericalangle(p_i,p_j)}, 
\end{equation}
where $\vol(F_i)$ and $\vol(\sigma_{ij})$ are to be understood as relative volume.
The expression for $\tilde\alpha$ is (up to a constant factor) the \emph{cone volume} of $F_i$ in $P^\circ$\!, \ie\ the volume~of~the cone with base face $F_i$ and apex at the origin.
As such it is positive, which confirms again~that we can normalize to $\alpha\in\Delta_n$, and we see that $\alpha_i$ measures the fraction~of the cone vo\-lume at $F_i$ in the total volume of $P^\circ$.
That $\smash{\tilde M}$ can be normalized~follows from the next statement, which is a precursor to \eqref{eq:Wachspress_is_Izmestiev_row_sum}:

\begin{proposition}
%
    $\sum_j \tilde M_{ij} = (d-1)\tilde \alpha_i$.
\end{proposition}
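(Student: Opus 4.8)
The plan is to exploit the single structural fact about the generalized polar dual that makes everything work: the volume of $P^\circ(\mathbf c)$ is a homogeneous function of $\mathbf c$ of degree $d$. Writing $V(\mathbf c):=\vol(P^\circ(\mathbf c))$, one reads off directly from the defining inequalities that $P^\circ(\lambda\mathbf c)=\lambda\,P^\circ(\mathbf c)$ for every $\lambda>0$, and hence $V(\lambda\mathbf c)=\lambda^d V(\mathbf c)$; so $V$ is positively homogeneous of degree $d$ on the open cone of those $\mathbf c$ for which $P^\circ(\mathbf c)$ is a full-dimensional polytope with the origin in its interior — an open neighborhood of $\mathbf c=\mathbf 1$. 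Since the Wachspress coordinates exist, $V$ is differentiable near $\mathbf 1$, so Euler's identity for homogeneous functions yields $\sum_i c_i\,\partial V/\partial c_i=d\,V$ as an identity of functions on a neighborhood of $\mathbf 1$ (equivalently, each first partial $\partial V/\partial c_i$ is itself homogeneous of degree $d-1$).

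Given this, I would simply differentiate the identity $\sum_i c_i\,\partial_i V(\mathbf c)=d\,V(\mathbf c)$ with respect to $c_j$ and evaluate at $\mathbf c=\mathbf 1$; this is legitimate because the second derivatives of $V$ at $\mathbf 1$ exist, which is exactly the content of \eqref{eq:variational_definition} (equivalently of the second-order expansion \eqref{eq:Taylor_definition}). The left-hand side becomes $\partial_j V(\mathbf 1)+\sum_i\partial_i\partial_j V(\mathbf 1)$ and the right-hand side $d\,\partial_j V(\mathbf 1)$, so that
\[
\tilde\alpha_j+\sum_i\tilde M_{ij}
 \;=\; \partial_j V(\mathbf 1)+\sum_i\partial_i\partial_j V(\mathbf 1)
 \;=\; d\,\partial_j V(\mathbf 1)
 \;=\; d\,\tilde\alpha_j ,
\]
whence $\sum_i\tilde M_{ij}=(d-1)\tilde\alpha_j$, which is the claim after using the symmetry $\tilde M_{ij}=\tilde M_{ji}$ (it is a Hessian) and relabeling. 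As a consistency check I would note that restricting \eqref{eq:Taylor_definition} to the ray $\mathbf c=\lambda\mathbf 1$ and matching powers of $\lambda-1$ in $V(\lambda\mathbf 1)=\lambda^d\vol(P^\circ)$ recovers both $\langle\mathbf 1,\tilde\alpha\rangle=d\,\vol(P^\circ)$ and $\mathbf 1\T\tilde M\mathbf 1=d(d-1)\vol(P^\circ)$, in agreement with what one gets by summing the proposition over $i$.

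The computation itself is two lines; the only point requiring a word of care is the regularity of $V$ near $\mathbf c=\mathbf 1$, since the combinatorial type of $P^\circ(\mathbf c)$ need not be locally constant there when $P$ is not simplicial. However, $C^1$-regularity is precisely the classical continuity of the Wachspress coordinates, and twice-differentiability at $\mathbf 1$ is precisely what makes $\tilde M$ and the expansion \eqref{eq:Taylor_definition} meaningful at all; both are therefore already available from the cited work of Warren et al., Ju et al.\ and Izmestiev, so no additional analytic input is needed. If one wanted a fully self-contained treatment, one could further observe that $V$ is piecewise polynomial in $\mathbf c$ with derivatives matching to the required order across the combinatorial walls, but this will not be necessary here. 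I expect this last regularity bookkeeping to be the only real obstacle; the homogeneity argument itself is completely robust.
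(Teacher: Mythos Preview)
Your proof is correct and takes essentially the same approach as the paper: both exploit the degree-$d$ homogeneity of $\vol(P^\circ(\mathbf c))$ and invoke Euler's identity, then evaluate at $\mathbf c=\mathbf 1$. The only cosmetic difference is that the paper applies Euler's theorem directly to the degree-$(d-1)$ function $\partial_i V$, whereas you apply it to $V$ and then differentiate; these yield the same equation.
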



\begin{proof}
Observe first that $\vol(P^\circ(\mathbf c))$ is a homogeneous function of degree $d$, \ie\
$$\vol(P^\circ(t\mathbf c)) = \vol(tP^\circ(\mathbf c)) = t^d\vol(P^\circ(\mathbf c))$$
for all $t\ge 0$.
Each derivative $\partial\vol(P^\circ(\mathbf c))/\partial c_i$ is then homogeneous of degree $d-1$.
\emph{Euler's homogeneous function theorem} (\cref{res:Eulers_homogeneous_function_theorem}) yields
$$
\sum_j c_j \frac{\partial^2\vol(P^\circ(\mathbf c))}{\partial c_i\partial c_j} 
= (d-1)  \frac{\partial\vol(P^\circ(\mathbf c))}{\partial c_i}.
$$
Evaluating at $\mathbf c=1$ and using \eqref{eq:variational_definition} yields the claim. 
%
\end{proof}


We immediately see that 
$\sum_{i,j}\tilde M_{ij}>0$ and that we can normalize to $\sum_{i,j}M_{ij} = 1$. For the normalized quantities then indeed holds \eqref{eq:Wachspress_is_Izmestiev_row_sum}:

\begin{corollary}
\label{res:Wachspress_is_Izmestiev_row_sum}
$\sum_{i} M_{ij}=\alpha_j$ for all $j\in\{1,...,n\}$.
\end{corollary}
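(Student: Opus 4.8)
The statement to prove is Corollary~\ref{res:Wachspress_is_Izmestiev_row_sum}: after normalizing the Izmestiev matrix so that $\sum_{i,j} M_{ij} = 1$, one has $\sum_i M_{ij} = \alpha_j$ for all $j$, where $\alpha_j$ are the (normalized) Wachspress coordinates.

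The plan is to derive this directly from the Proposition immediately preceding it, namely $\sum_j \tilde M_{ij} = (d-1)\tilde\alpha_i$ for the unnormalized quantities, together with the symmetry of $\tilde M$ (which follows from \eqref{eq:variational_definition}, since mixed partials commute). First I would sum the Proposition's identity over $i$ to get $\sum_{i,j}\tilde M_{ij} = (d-1)\sum_i \tilde\alpha_i$. Since each $\tilde\alpha_i = \vol(F_i)/\|p_i\|$ is (up to a positive constant) a cone volume and hence strictly positive, the right-hand side is strictly positive, so $\sum_{i,j}\tilde M_{ij} > 0$; this is exactly what licenses normalizing $\tilde M$ by the positive scalar $\lambda := 1/\sum_{i,j}\tilde M_{ij}$ to obtain $M := \lambda \tilde M$ with $\sum_{i,j} M_{ij} = 1$. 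The normalized Wachspress coordinates are $\alpha := \tilde\alpha / \sum_k \tilde\alpha_k$, i.e. $\alpha_j = \tilde\alpha_j / \sum_k\tilde\alpha_k$.

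Now I compute $\sum_i M_{ij}$. Using symmetry $M_{ij} = M_{ji}$ and the scaled Proposition identity $\sum_i M_{ji} = (d-1)\lambda\tilde\alpha_j$, we get $\sum_i M_{ij} = (d-1)\lambda\tilde\alpha_j$. It remains to identify the constant $(d-1)\lambda$ with $1/\sum_k\tilde\alpha_k$. Indeed, summing $\sum_i M_{ij} = (d-1)\lambda\tilde\alpha_j$ over $j$ gives $1 = \sum_{i,j} M_{ij} = (d-1)\lambda\sum_j\tilde\alpha_j$, hence $(d-1)\lambda = 1/\sum_j\tilde\alpha_j$. Substituting back yields $\sum_i M_{ij} = \tilde\alpha_j / \sum_k\tilde\alpha_k = \alpha_j$, as desired.

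There is essentially no obstacle here — the corollary is a two-line bookkeeping consequence of the Proposition, the symmetry of $\tilde M$, and the positivity of the $\tilde\alpha_i$. The only point requiring a sentence of care is making the normalization conventions explicit: that $M$ is obtained from $\tilde M$ by a single positive scalar $\lambda$ chosen so that $\sum_{i,j}M_{ij}=1$, and that $\alpha$ denotes the correspondingly normalized Wachspress coordinates, so that the displayed equation \eqref{eq:Wachspress_is_Izmestiev_row_sum} is consistent with the definition of $\alpha$ as an element of $\Delta_n$. Once this is said, the computation above closes the argument.
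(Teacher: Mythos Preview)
Your argument is correct and is exactly the approach the paper has in mind: the corollary is stated as an immediate consequence of the preceding Proposition together with the normalization conventions, and you have spelled out precisely this bookkeeping (including the use of symmetry of $\tilde M$ to pass from row sums to column sums, and summing over $i$ to identify the scaling constant). There is nothing to add.
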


Lastly, the following properties of the Wachspress coordinates and the~Izmestiev matrix will be relevant and can be inferred from the above.

\begin{samepage}
\begin{remark}\quad
\label{rem:Wachspress_properties}
\begin{myenumerate}
    \item The Wachspress coordinates of the origin and the Izmestiev matrix depend continuously on the translation of $P$, and their normalized variants can be continuously extended to $0\in \partial P$. If the origin lies in the relative interior of a face $\sigma\in\F(P)$, then $\alpha_i>0$ if and only if $p_i\in \sigma$. In particular, if~$0\in$ $\Int(P)$, then $\alpha\in\Int\Delta_n$. 
    \item The Wachspress coordinates of the origin and the Izmestiev matrix are~invariant under linear transformation of $P$. 
    This can be inferred from \eqref{eq:geometric_definition} via an elementary computation, as was done for the Izmestiev matrix in \cite[Proposition 4.6.]{winter2021capturing}.
    
\end{myenumerate}
\end{remark}
\end{samepage}

\subsection{Proof of \cref{res:expansion_main_result}}
\label{sec:proof_of_main_result}



Recall the main theorem.

\begin{theoremX}{\ref{res:expansion_main_result}}
Let $P\subset\RR^d$ be a polytope with edge-graph $G_P=(V,E)$ and let~$\alpha\in\Delta_n$ be the Wachspress coordinates of some interior point $x\in\Int(P)$.
If $q\: V$ $\to\RR^e$ is some embedding of $G_P$ whose edges are at most as long as in $P$, then
$$\|P\|_\alpha\ge \|q\|_\alpha,$$
with equality if and only if $q$ is an affine transformation of the skeleton $\skel(P)$,\nlspace all~ed\-ges of which are of the same length as in $P$.
\end{theoremX}

The proof presented below is completely elementary, using little more than~linear algebra.
In \cref{sec:semi_definite_proof} the reader can find a second shorter proof based on~the~duality theory of semi-definite programming.

\begin{proof}
At the core of this proof is rewriting the $\alpha$-expansions $\|P\|_\alpha$ and $\|q\|_\alpha$ as a sum~of terms, each of which is non-increasing when transitioning from $P$ to $q$:
%
\begin{align}\label{eq:3_term_decomposition}
\|P\|_\alpha^2 \,
    &= \sum_{ij\in E} M_{ij}\|p_i-p_j\|^2 - \Big\|\sum_i\alpha_i p_i\Big\|^2 + \tr(MX_PX_P\T)
  \\[-1.5ex] &\notag
    \qquad\qquad\quad\; \rotatebox{90}{$\le$} 
    \qquad\qquad\qquad\;\;\, \rotatebox{90}{$\le$} 
    \qquad\qquad\qquad \rotatebox{90}{$\le$} 
  \\[-1.5ex]
  &\;\,\phantom{\ge} \sum_{ij\in E} M_{ij}\|q_i-q_j\|^2 \,- \Big\|\sum_i\alpha_i q_i\Big\|^2 + \tr(MX_qX_q\T) =\, \|q\|_\alpha^2. \notag
\end{align}
Of course, neither the decomposition nor the monotonicity of the terms is obvious; yet their proofs use little more than linear algebra.
We elaborate on this now.

For the setup, 
we recall that the $\alpha$-expansion is a translation invariant measure of size. 
We can therefore translate $P$ and $q$ to suit our needs:
\begin{myenumerate}
    \item translate $P$ so that $x=0$, that is, $\sum_i \alpha_i p_i = 0$.
    \item since then $0\in\Int(P)$, \cref{res:Izmestiev} ensures the existence of the Izmestiev matrix $M\in\RR^{n\x n}$. 
    \item
    Let $\theta_1>\theta_2>\cdots>\theta_m$ be the eigenvalues of $M$, where $\theta_1>0$ and $\theta_2=0$. 
    By~\cref{res:Izmestiev_observations} \itm4 there exists a unique $\theta_1$-eigenvector $z\in\Int(\Delta_n)$.
    \item translate $q$ so that $\sum_i z_i q_i=0$.
\end{myenumerate}

We are ready to derive the decompositions shown in \eqref{eq:3_term_decomposition}: the following equality can be verified straightforwardly by rewriting the square norms as inner products:
\begin{align*}
\tfrac12 \sum_{i,j} M_{ij} \|p_i-p_j\|^2
   &= \sum_i \Big(\sum_j M_{ij}\Big) \|p_i\|^2 - \sum_{i,j} M_{ij}\<p_i,p_j\>,
\end{align*}
%
We continue rewriting each of the three terms:
\begin{itemize}
    \item on the left: $M_{ij}\|p_i-p_j\|^2$ is only non-zero for $ij\in E$ (using \cref{res:Izmestiev}~\itm2). The sum can therefore be rewritten to iterate over the edges of $G_P$ (where we consider $ij,ji\in E$ the same and so we can drop the factor $\nicefrac12$)
    \item in the middle: the row sums of the Izmestiev matrix are exactly the Wachspress coordinates of the origin, that is, $\smash{\sum_{j} M_{ij}=\alpha_i}$.
    \item on the right: recall the matrix $X_P\in\RR^{d\x n}$ whose rows are the vertex~coordinates of $P$.
    The corresponding Gram matrix $X_P X_P\T$ has entries $(X_PX_P\T)_{ij}=\<p_i,p_j\>$.
\end{itemize}
By this we reach the following equivalent identity:
\begin{align*}
\sum_{ij\in E} M_{ij} \|p_i-p_j\|^2
   &= \sum_i \alpha_i \|p_i\|^2 - \sum_{i,j} M_{ij}(X_P X_P\T)_{ij},
\end{align*}
We continue rewriting the terms on the right side of the equation:
\begin{itemize}
    \item in the middle: the following transformation was previously used in the~simplex case (\cref{res:special_case_simplices}) and can be verified by straightforward expansion~of the squared norms: $$\sum_i\alpha_i \|p_i\|^2 = \tfrac12 \sum_{i,j}\alpha_i\alpha_j \|p_i-p_j\|^2 + \Big\|\sum_i \alpha_i p_i\Big\|^2.$$
    Note that the middle term is just the $\alpha$-expansion $\|P\|_\alpha^2$.
    \item on the right: the sum iterates over entry-wise products of the two matrices $M$ and $X_PX_P\T$, which can be rewritten as $\tr(M X_P X_P\T)$.
\end{itemize}
Thus, we arrive at
\begin{align*}
\sum_{ij\in E} M_{ij} \|p_i-p_j\|^2
   &= \|P\|_\alpha^2 + \Big\|\sum_i \alpha_i p_i\Big\|^2\! - \tr(M X_PX_P\T).
\end{align*}
This clearly rearranges to the first line of \eqref{eq:3_term_decomposition}.
An analogous sequence of transformations works for $q$ (we replace $p_i$ by $q_i$ and $X_P$ by $X_q$, but we keep the Izmestiev matrix of $P$).
This yields the second line of \eqref{eq:3_term_decomposition}.
It remains to verify the term-wise inequalities.

For the first term we have
$$
\sum_{ij\in E} M_{ij} \|p_i-p_j\|^2 \ge \sum_{ij\in E} M_{ij} \|q_i-q_j\|^2
$$
by term-wise comparison: we use that the sum is only over edges, that $M_{ij}>0$ for $ij\in E$ (by \cref{res:Izmestiev} \itm1), and that edges in $q$ are not longer than in $P$.

Next, by the wisely chosen translation in setup \itm1 we have $\sum_i \alpha_i p_i=0$, thus
$$ -\Big\|\sum_i \alpha_i p_i\Big\|^2\! = 0 \ge -\Big\|\sum_i \alpha_i q_i\Big\|^2.$$

The final term requires the most elaboration. By \cref{res:Izmestiev} \itm4 the Izmestiev matrix satisfies $MX_P=0$.
So it suffices to show that $\tr(M X_qX_q\T)$ is non-positive, as then already follows
\begin{equation}
\label{eq:trace_neg}
\tr(M X_P X_P\T) = 0 \overset?\ge \tr(M X_q X_q\T).    
\end{equation}
To prove $\smash{\tr(M X_q X_q\T)\le 0}$ consider the decomposition $\smash{X_q=X_q^1 + \cdots }$ $\smash{+\, X_q^m}$ where $MX_q^k = \theta_k X_q^k$ (since $M$ is symmetric, its eigenspaces are orthogonal and $X_q^k$ is the column-wise orthogonal projecting of $X_q$ onto the $\theta_k$-eigenspace). 
We compute
\begin{align*}
    \tr(M X_q X_q\T)
      &= \sum_{k,\ell} \tr(M X_q^k (X_q^\ell)\T)
    \\&= \sum_{k,\ell} \theta_k \tr(X_q^k (X_q^\ell)\T) && |\; \text{by $MX_q^k = \theta_k X_q^k$}
    \\&= \sum_{k,\ell} \theta_k \tr((X_q^\ell)\T X_q^k) && |\; \text{by $\tr(AB)=\tr(BA)$}
    \\&= \sum_{k} \theta_k \tr((X_q^k)\T X_q^k).  && |\; \text{since $(X_q^\ell)\T X_q^k = 0$ when $k\not=\ell$}
\end{align*}
Again, we have been wise in our choice of translation of $q$ in setup \itm4: $\sum_i z_i q_i=0$ can be written as $z\T X_q=0$.
Since $z$ spans the $\theta_1$-eigenspace, the columns~of~$X_q$~are therefore orthogonal to the $\theta_1$-eigenspace, hence $X_q^1=0$. We conclude
\begin{align}\label{eq:tr_final_step}
    \tr(M X_q X_q\T)
      &= \sum_{k\ge 2} \theta_k \tr((X_q^k)\T X_q^k) \le 0,
\end{align}
where the final inequality follows from two observations: first, the Izmestiev matrix $M$ has a unique positive eigenvalue $\theta_1$, thus $\theta_k\le 0$ for all $k\ge 2$ (\cref{res:Izmestiev}~\itm5); second $(X_q^k)\T X_q^k$ is a Gram matrix, hence is positive semi-definite and has a non-negative trace.

This finalizes the term-wise comparison and established the inequality \eqref{eq:3_term_decomposition}. It remains to discuss the equality case.
By now we see that the equality $\|P\|_\alpha=\|q\|_\alpha$ is equivalent to term-wise equality in \eqref{eq:3_term_decomposition}; and so we proceed term-wise.

To enforce equality in the first term
$$
\sum_{ij\in E} M_{ij} \|p_i-p_j\|^2 \overset!=  \sum_{ij\in E} M_{ij} \|q_i-q_j\|^2
$$
we recall again that $M_{ij}>0$ whenever $ij\in E$ by \cref{res:Izmestiev} \itm1.
Thus, we require equality $\|p_i-p_j\|=\|q_i-q_j\|$ for all edges $ij\in E$. And so edges in $q$ must be of~the same length as in $P$.

We skip the second term for now and enforce equality in the last term:
$$0=\tr(M X_P X_P\T) \overset!=\tr(M X_q X_q\T) \overset{\eqref{eq:tr_final_step}} = \sum_{k\ge 2} \theta_k \tr((X_q^k)\T X_q^k).$$
Since $\theta_k<0$ for all $k\ge 3$ (\cf\ \cref{res:Izmestiev_observations} \itm3), for the sum on the right~to~vanish we necessarily have
$$\tr((X_q^k)\T X_ q^k)=0\;\text{ for all $k\ge 3$}\quad\implies\quad X_q^k=0\;\text{ for all $k\ge 3$}.$$
%
Since we also already know $X_q^1=0$, we are left with $\smash{X_q=X_q^2}$, that is, the columns of $X_q$ are in the $\theta_2$-eigenspace (aka.\ the kernel) of $M$. 
In particular,\nlspace \mbox{$\Span X_q\subseteq\ker M$} $=\Span X_P$, where the last equality follows by \cref{res:Izmestiev_observations} \itm2.
It is well-known that if two matrices satisfy $\Span X_q\subseteq\Span X_P$, then the rows of $X_q$ are linear transformations of the rows of $X_P$, that is, $T X_P\T=X_q\T$ for some linear map $T\:\RR^d\to\RR^e$, or equivalently, $q_i =  T p_i$ for all $i\in V$ (see \cref{res:linear_algebra} in the appendix for a short reminder of the proof).
Therefore, $q$ (considered with its original translation prior to the setup) must have been an \emph{affine} transformation of $\skel(P)$.

Lastly we note that equality in the middle term of \eqref{eq:3_term_decomposition} yields no new constraints.
In fact, by $\Span X_q\subseteq \ker M$ we have $MX_q=0$ and
$$\sum_i\alpha_iq_i = \sum_i\Big(\sum_j M_{ij}\Big) q_i=\sum_j\Big(\sum_i M_{ij} q_i\Big)=0=\sum_i\alpha_i p_i.$$
%
%
Thus, identity in the middle term follows already from identity in the last term.

For the other direction of the identity case assume that $q$ is an affine transformation of $\skel(P)$ with the same edge lengths.
Instead of setup \itm4 assume a
translation of $q$ for which it is a \emph{linear} transformation of $\skel(P)$, \ie\ $\smash{X_q\T = T X_P\T}$ for some linear map $T\:\RR^d\to\RR^e$.
Hence $\smash{\sum_i \alpha_i p_i = \sum_i \alpha_i q_i=0}$ and $M X_P=MX_q=0$, and \eqref{eq:3_term_decomposition} reduces to
$$\|P\|_\alpha^2 = \sum_{ij\in E} M_{ij}\|p_i-p_j\|^2 - 0 + 0 = \sum_{ij\in E} M_{ij}\|q_i-q_j\|^2 - 0 + 0 = \|q\|_\alpha^2.$$
%
\end{proof}


As an immediate consequence we have the following:

\begin{corollary}\label{res:polytope_by_lenghts_and_alpha}
A polytope is uniquely determined (up to affine equivalence) by its edge-graph, its edge lengths and the Wachspress coordinates of some interior point.
\begin{proof}
Given polytopes $P_1$ and $P_2$ with the same edge-graph and edge lengths as well as points $x_i\in\Int(P_i)$ with the same Wachspress coordinates $\alpha\in\Delta_n$.\nls  By~\cref{res:expansion_main_result} we have $\|P_1\|_\alpha\ge\|P_2\|_\alpha\ge\|P_1\|_\alpha$, thus $\|P_1\|_\alpha=\|P_2\|_\alpha$.
Then $P_1$~and~$P_2$ are~affinely equi\-valent by the equality case of \cref{res:expansion_main_result}.
\end{proof}
\end{corollary}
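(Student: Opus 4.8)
The plan is to deduce the statement directly from \cref{res:expansion_main_result} by a two-sided comparison. First I would set up the hypotheses carefully: let $P_1\subset\RR^{d_1}$ and $P_2\subset\RR^{d_2}$ be polytopes with a fixed isomorphism of their edge-graphs, so that their vertices are labelled $p_1,\dots,p_n$ and $q_1,\dots,q_n$ compatibly and $\|p_i-p_j\|=\|q_i-q_j\|$ for every edge $ij\in E$; and suppose there are points $x_1\in\Int(P_1)$ and $x_2\in\Int(P_2)$ with a common Wachspress coordinate vector $\alpha\in\Delta_n$. The goal is to show that $P_1$ and $P_2$ are affinely equivalent.

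Next I would apply \cref{res:expansion_main_result} with $P=P_1$ and $q=\skel(P_2)$: this is a legitimate embedding of the common edge-graph, and each of its edges has exactly the length of the corresponding edge of $P_1$, hence in particular is \enquote{at most as long}. The theorem then yields $\|P_1\|_\alpha\ge\|\skel(P_2)\|_\alpha=\|P_2\|_\alpha$. Since the hypotheses are symmetric in $P_1$ and $P_2$, the same argument with the roles reversed gives $\|P_2\|_\alpha\ge\|P_1\|_\alpha$. Combining these, $\|P_1\|_\alpha=\|P_2\|_\alpha$, so equality holds in the first application.

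I would then invoke the equality clause of \cref{res:expansion_main_result}: equality forces $\skel(P_2)$ to be an affine transformation of $\skel(P_1)$, \ie\ there is an affine map $T$ with $Tp_i=q_i$ for all $i\in V$. Such a $T$ carries $P_1=\conv\{p_1,\dots,p_n\}$ onto $P_2=\conv\{q_1,\dots,q_n\}$. To upgrade this to a genuine affine equivalence (which matters if $d_1\ne d_2$), I would note, as in the proof of \cref{res:expansion_main_result}, that after centering both skeleta the linear part of $T$ maps the row space of $X_{P_1}$ onto that of $X_{P_2}$, so $\dim\aff(P_2)\le\dim\aff(P_1)$; the symmetric inequality gives equality, and then $T$ restricts to an affine isomorphism $\aff(P_1)\to\aff(P_2)$, so $P_1$ and $P_2$ are affinely equivalent.

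I do not expect a genuine obstacle here: all the substance, namely the non-obvious fact that the $\alpha$-expansion is monotone under shrinking only the \emph{edge} lengths, already lives in \cref{res:expansion_main_result}, so the corollary is a purely formal consequence. The only care needed is bookkeeping: phrasing \enquote{same Wachspress coordinates} relative to the fixed vertex labelling, and supplying the short dimension count above so that \enquote{affine image on the vertex set} is genuinely \enquote{affine equivalence of the polytopes} when the ambient dimensions disagree.
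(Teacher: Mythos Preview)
Your proposal is correct and follows essentially the same approach as the paper: apply \cref{res:expansion_main_result} in both directions to obtain $\|P_1\|_\alpha=\|P_2\|_\alpha$, then invoke the equality case. Your additional dimension count to upgrade ``affine image on vertices'' to ``affine equivalence of polytopes'' when the ambient dimensions differ is extra bookkeeping the paper leaves implicit, but the core argument is identical.
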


Remarkably, this reconstruction works across all combinatorial types~and dimen\-sions.
That the reconstruction is only up to affine equivalence rather than~iso\-metry is due to examples such as rhombi and the zonotope in \cref{fig:length_preserving_linear_trafo}. In general, such flexibility via an affine transformation happens exactly if ``the edge directions lie on a conic at infinity''
(see \cite[Proposition 4.2]{connelly2005generic} or 
\cite[Proposition 1.4]{connelly2018affine}).

\begin{figure}[h!]
    \centering
    \includegraphics[width=0.53\textwidth]{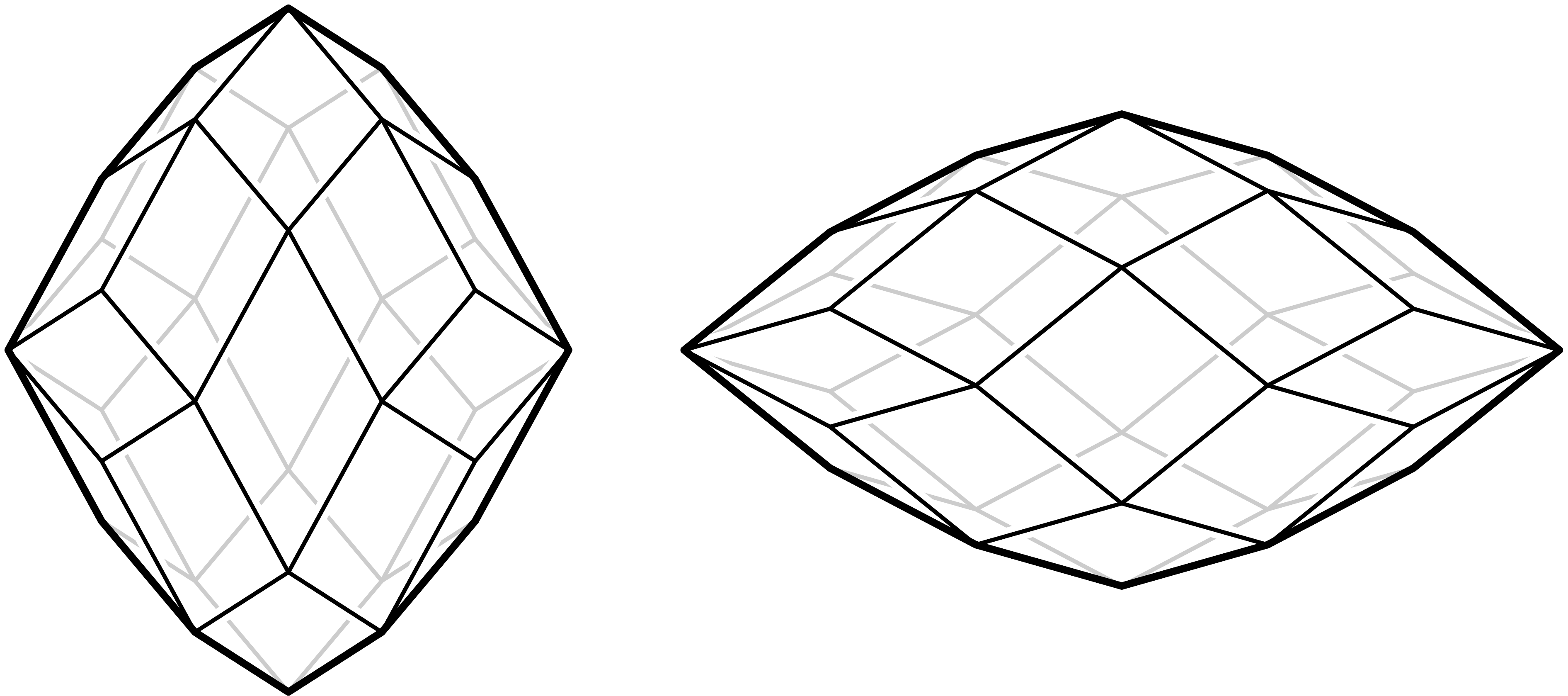}
    \caption{\mbox{Two affinely equivalent (but not isometric) polytopes~with~the} same edge length. The edge directions trace a circle on the ``plane~at~infinity''.}
    \label{fig:length_preserving_linear_trafo}
\end{figure}

Lastly, the reconstruction permitted by \cref{res:polytope_by_lenghts_and_alpha} is feasible in practice. This follows from a reformulation of \cref{res:expansion_main_result} as a semi-definite program, which can be solved in polynomial time. This is elaborated on in the alternative proof given in \cref{sec:semi_definite_proof}.

\section{Rigidity, tensegrity and reconstruction}
\label{sec:expansion_rigidity_tensegrity}

Our reason for pursuing \cref{res:expansion_main_result} in \cref{sec:expansion_version} was to transfer the proof of the simplex case (\cref{res:special_case_simplices}) to general polytopes with the eventual goal of verifying the main conjecture and its corresponding \enquote{unique reconstruction version}:



\begin{conjectureX}{\ref{conj:main}}
Given polytopes $P\subset\RR^d$ and $Q\subset\RR^e$ with common edge-graph.\nlspace If
\begin{myenumerate}
    \item $0\in \Int(Q)$,
    \item edges in $Q$ are at most as long as in $P$, and
    \item vertex-origin distances in $Q$ are at least as large as in $P$,
\end{myenumerate}
then $P\simeq Q$.
\end{conjectureX}


\begin{conjectureX}{\ref{conj:main_rigid_intro}}
A polytope $P$ with $0\in\Int(P)$ is uniquely determined (up to~isometry) by its edge-graph, edge lengths and vertex-origin distances.
\end{conjectureX}

In contrast to our formulation of \cref{res:expansion_main_result}, both of the above conjectures~are \emph{false} when stated for a general graph embedding $q\:V(G_P)\to\RR^e$ instead of $Q$,\nls even if we require $0\in\Int\conv(q)$.
%
The following counterexample was provided by Joseph Doolittle \cite{410625}:




\begin{example}\label{ex:cube}
The 3-cube $P:=[-1,1]^3\subset\RR^3$ is inscribed in a sphere of~radius $\smash{\sqrt 3}$.
\Cref{fig:counterexample_cube} shows an inscribed embedding $q\:V(G_P)\to\RR^2$ with the same circumradius and edge lengths, collapsing $G_P$ onto a path.
In the circumcircle each edge spans~an arc of length
$$2\sin^{-1}(1/\sqrt 3)\approx 70.5287^\circ > 60^\circ.$$
The three edges therefore suffice to reach more than half around the circle.
In other words, the convex hull of $q$ contains the circumcenter in its interior.

A full-dimensional counterexample in $\RR^3$ can be obtained by interpreting $q$~as embedded in $\RR^2\times\{0\}$ follows by a slight perturbation.
\end{example}

\begin{figure}[h!]
    \centering
    \includegraphics[width=0.5\textwidth]{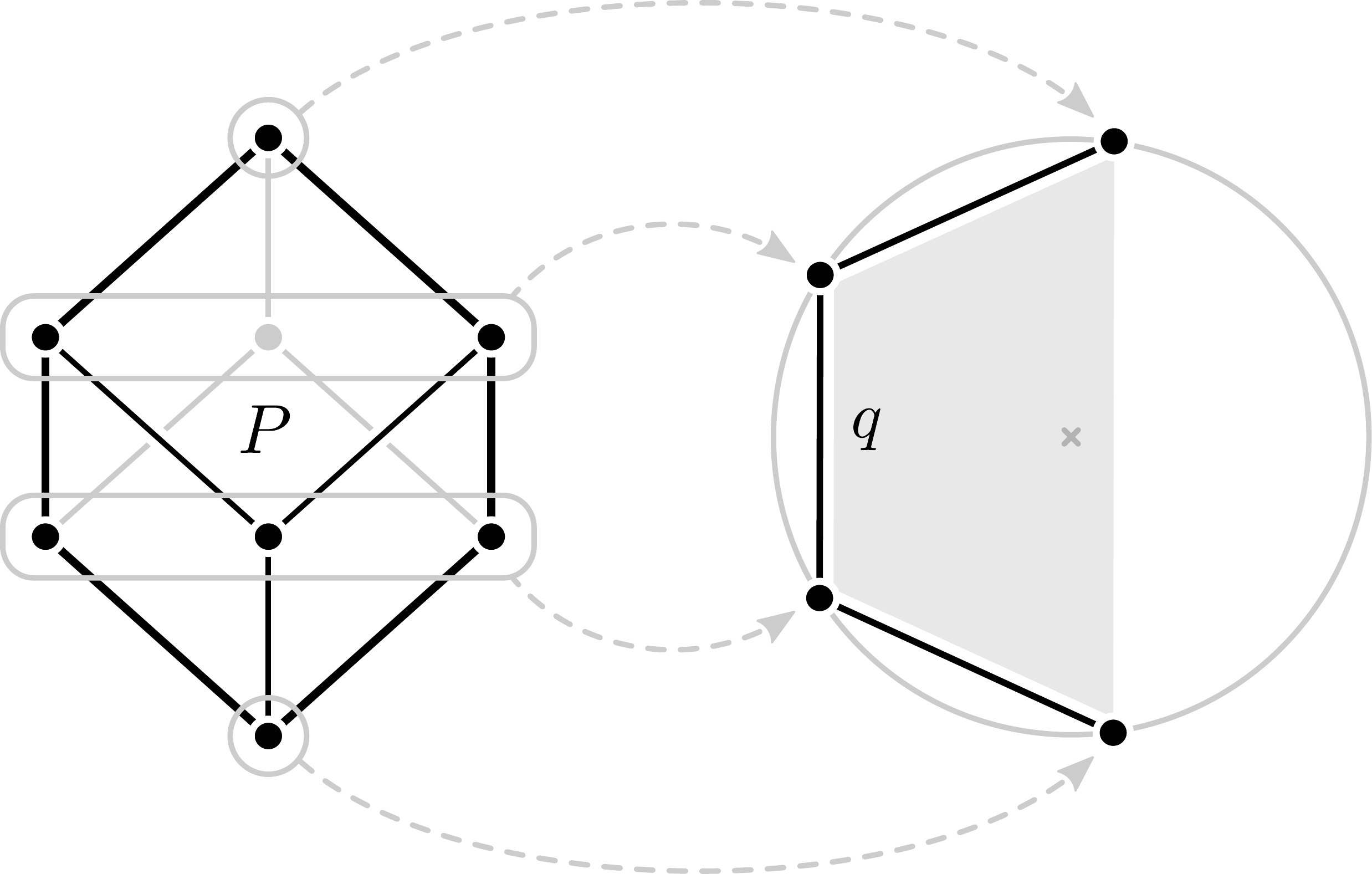}
    \caption{\mbox{A 2-dimensional embedding of the edge-graph~of~the~3-di\-men}\-\mbox{sio\-nal cube with the same circumradius and edge lengths as the unit cube,} also containing the origin in its convex hull.}
    \label{fig:counterexample_cube}
\end{figure}

Potential fixes to the \enquote{graph embedding versions} of \cref{conj:main_rigid_intro} and \cref{conj:main} are discussed in \cref{sec:fixing_q_version_conjecture}.






While the general \cref{conj:main} will stay open, we are confident that our~methods point the right way and highlight the essential difficulties. 
We overcome~them in three relevant special cases, for some of which we actually can replace~$Q$ with a graph~embedding $q\:V(G_P)\to\RR^e$. Those are
%
\begin{myenumerate}
    \item $P$ and $q$ are centrally symmetric (\cref{res:centrally_symmetric}). 
    \item $q$ is a sufficiently small perturbation of $\skel(P)$ (\cref{thm:main_local}).
    \item $P$ and $Q$ are combinatorially equivalent (\cref{thm:main_comb_eq}).
\end{myenumerate}

\subsection{The remaining difficulty}

On trying to generalize the proof of \cref{res:special_case_simplices} beyond simplices using \cref{res:expansion_main_result} we face the following difficulty:
%
\cref{res:expansion_main_result} requires the $\alpha\in\Delta_n$ to be Wachspress coordinates of an interior point $x\in\Int(P)$, whereas in the proof of \cref{res:special_case_simplices} we use that $\alpha$ is a set of barycentric coordinates of $0\in$ $\Int(Q)$. 
While we have some freedom in choosing $x\in\Int(P)$, and thereby $\alpha\in\Delta_n$, it is not clear that any such choice yields barycentric coordinates for $0\in\Int(Q)$.
In fact, this is the only obstacle preventing us from proving \cref{conj:main} right away.
For convenience we introduce the following map:

\begin{definition}\label{def:Wachspress_map}
Given polytopes $P\subset\RR^d$ and $Q\subset\RR^e$, the \emph{Wachspress map}~$\phi\: P\to Q$ is defined as follows: for $x\in P$ with Wachspress coordinates $\alpha(x)\in\Delta_n$ set
$$\phi(x):=\sum_i \alpha_i(x) q_i.$$
%
\end{definition}

In cases where we are working with a graph embedding $q\:V(G_P)\to \RR^e$ instead of $Q$ we have an analogous map $\phi\: P\to\conv(q)$.

Our previous discussion amounts to checking whether the origin is in the image of $\Int(P)$ \wrt\ $\phi$.
While this could be reasonably true if $\dim P \ge \dim Q$, it~is~certainly too much to hope for if $\dim P<\dim Q$: the image of $\phi(\Int(P))\subset Q$ is of a smaller dimension than $Q$ and easily ``misses'' the origin.
Fortunately, we can ask for less, which we now formalize in \itm1 of the following lemma:

\begin{lemma}\label{lem:main}
Let $P\subset\RR^d$ be a polytope and $q\:V(G_P)\to\RR^e$ some embedding. If
\begin{myenumerate}
    \item there exists an $x\in\Int(P)$ with $\|\phi(x)\|\le\|x\|$ (\eg\ $\phi(x)=0$),
    \item edges in $q$ are at most as long as in $P$, and
    \item vertex-origin distances in $q$ are at least as large as in $P$,
\end{myenumerate}
then $q\simeq\skel(P)$ (via an orthogonal transformation). 
%

\begin{proof}
Choose $x\in \Int P$ with $\|\phi(x)\|\le \|x\|$, and note that its Wachspress coordi\-nates $\alpha\in\Int\Delta_n$ are strictly positive.
In the remainder we follow closely the proof of \cref{res:special_case_simplices}: consider the system of (in)equalities:
\begin{align*}
    \sum_i \alpha_i \|p_i\|^2
      &= \Big\|\sum_i \alpha_i p_i\Big\|^2 \!+
         \tfrac12\sum_{i,j} \alpha_i\alpha_j \|p_i-p_j\|^2
      = \;\; \|x\|^2 \;\;\;\,+ \|P\|_\alpha^2
    \\[-1.5ex]
    \rotatebox{90}{$\ge$} 
    \qquad
    & 
    \qquad \qquad \qquad \qquad \qquad \qquad \qquad \qquad \qquad \,\;\;
    \rotatebox{90}{$\le$}
    \qquad\;\;\;\;\;\,
    \rotatebox{90}{$\le$}
    \\[-1.5ex]
    \sum_i \alpha_i \|q_i\|^2
      &= \Big\|\sum_i \alpha_i q_i\Big\|^2 \!+
         \tfrac12\sum_{i,j} \alpha_i\alpha_j \|q_i-q_j\|^2\,
      = \|\phi(x)\|^2 \,+\, \|q\|_\alpha^2,
\end{align*}
where the two rows hold by simple computation and the vertical inequalities follow (from left to right) by \itm3, \itm1, and \itm2 + \cref{res:expansion_main_result} respectively.
It follows~that all inequali\-ties are actually equalities.
In particular, since $\alpha_i>0$ we find both~$\|p_i\|$ $=$ $\|q_i\|$ for all $i\in V$ and $\|p_i-p_j\|=\|q_i-q_j\|$ for all $i,j\in V(G_P)$, establishing that $q$ and $\skel(P)$~are indeed isometric via an orthogonal transformation.
%
\end{proof}
\end{lemma}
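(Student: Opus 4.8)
The plan is to run the chain-of-(in)equalities argument from the simplex case (\cref{res:special_case_simplices}) almost verbatim, invoking \cref{res:expansion_main_result} at the single point where the simplex proof relied on $\alpha$ being barycentric coordinates of the origin in $Q$.

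First I would fix an $x\in\Int(P)$ with $\|\phi(x)\|\le\|x\|$ (this is hypothesis \itm1) and let $\alpha\in\Delta_n$ be its Wachspress coordinates; since $x$ is interior, \cref{rem:Wachspress_properties}\itm1 gives $\alpha\in\Int\Delta_n$, \ie\ $\alpha_i>0$ for all $i$, and this strict positivity is what will eventually upgrade equalities of $\alpha$-weighted sums to termwise equalities. By the defining property of the Wachspress coordinates $\sum_i\alpha_i p_i=x$, and by \cref{def:Wachspress_map} $\sum_i\alpha_i q_i=\phi(x)$. Next I would record the elementary identity $\sum_i\alpha_i\|p_i\|^2=\bigl\|\sum_i\alpha_i p_i\bigr\|^2+\|P\|_\alpha^2$ (expand the squared norms into inner products; valid for any $\alpha\in\Delta_n$) and its analogue with $p$ replaced by $q$, which become $\sum_i\alpha_i\|p_i\|^2=\|x\|^2+\|P\|_\alpha^2$ and $\sum_i\alpha_i\|q_i\|^2=\|\phi(x)\|^2+\|q\|_\alpha^2$. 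Comparing the two left-hand sides against the two right-hand sides gives the chain
$$\sum_i\alpha_i\|p_i\|^2\le\sum_i\alpha_i\|q_i\|^2=\|\phi(x)\|^2+\|q\|_\alpha^2\le\|x\|^2+\|P\|_\alpha^2=\sum_i\alpha_i\|p_i\|^2,$$
where the first inequality is hypothesis \itm3 together with $\alpha_i>0$, and the second combines $\|\phi(x)\|\le\|x\|$ (hypothesis \itm1) with $\|q\|_\alpha\le\|P\|_\alpha$ (hypothesis \itm2 and \cref{res:expansion_main_result}). Since the chain closes up, every inequality must be an equality.

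It remains to read off the metric consequences. Equality in the first inequality, with $\alpha_i>0$ and the termwise bound $\|q_i\|\ge\|p_i\|$, forces $\|p_i\|=\|q_i\|$ for every $i\in V$. Equality in the second forces both $\|\phi(x)\|=\|x\|$ and $\|P\|_\alpha=\|q\|_\alpha$; by the equality clause of \cref{res:expansion_main_result} the latter says $q$ is an affine image of $\skel(P)$ all of whose edges have the same length as in $P$, and a short argument -- using that $G_P$ is connected and that this affine map additionally preserves every vertex-origin distance -- promotes it to an orthogonal transformation, giving $\|p_i-p_j\|=\|q_i-q_j\|$ for all $i,j\in V$. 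Then the point configurations $\{0,p_1,\dots,p_n\}$ and $\{0,q_1,\dots,q_n\}$ have identical pairwise distances, hence are congruent via an orthogonal map, \ie\ $q\simeq\skel(P)$.

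I do not expect a genuine obstacle here: the only substantial input is \cref{res:expansion_main_result}, which is already proved, and everything else is the bookkeeping of the simplex argument. The two points deserving care are, first, checking that the $\alpha$ handed to \cref{res:expansion_main_result} really is a vector of Wachspress coordinates of an interior point of $P$, so that its hypotheses genuinely apply, and, second, verifying in the equality case that ``affine image of $\skel(P)$ with matching edge lengths'' together with ``matching vertex-origin distances'' forces an honest isometry rather than merely an affine map. Note that no retranslation of $P$ or $q$ is needed, which is consistent since hypotheses \itm2, \itm3 and the conclusion all refer to distances from the fixed origin, and $\|\cdot\|_\alpha$ is translation invariant in any case.
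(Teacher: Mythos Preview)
Your proof is correct and follows essentially the same approach as the paper: set up the two-row identity $\sum_i\alpha_i\|p_i\|^2=\|x\|^2+\|P\|_\alpha^2$ and its $q$-analogue, compare termwise via hypotheses \itm1--\itm3 and \cref{res:expansion_main_result}, and force all inequalities to collapse.

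The one place you are actually \emph{more} careful than the paper is the equality case. The paper asserts directly that ``$\|p_i-p_j\|=\|q_i-q_j\|$ for all $i,j$'' once equality holds, but this does not follow termwise from $\|P\|_\alpha=\|q\|_\alpha$ (there is no a priori bound on non-edge distances). You correctly route through the equality clause of \cref{res:expansion_main_result} to get that $q$ is an affine image of $\skel(P)$ with matching edge lengths, and then observe that matching vertex-origin distances on top of this forces the affine map to be an isometric linear embedding. That last step is a genuine (if short) linear-algebra argument: writing $q_i=Ap_i+b$, the edge and norm constraints give $(A^\top A-I)p_i+A^\top b\perp(p_i-p_j)$ for every edge $ij$; since the edges at each vertex of a full-dimensional polytope span $\RR^d$, this yields $A^\top A=I$ and $b=0$. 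The paper has precisely this lemma in a suppressed passage, so your instinct that it deserves mention is well founded.
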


The only way for \cref{lem:main} \itm1 to fail is if $\|\phi(x)\|>\|x\|$ for all $x\in\Int(P)$.\nls
By \itm2 and \itm3 we have $\|\phi(x)\|=\|x\|$ whenever $x$ is a vertex or in an edge of~$P$, which makes it plausible that \itm1 should not fail, yet it seems hard to exclude~in~general.

In each of the three special cases of \cref{conj:main} discussed below we actually managed to verify $0\in\phi(\Int(P))$ in order to apply \cref{lem:main}.

\subsection{Central symmetry}
\label{sec:central_symmetry}

Let $P\subset\RR^d$ be \emph{centrally~symmetric} (more precisely, \emph{origin symmetric}), that is, $P=-P$.
%
%
This induces an involution \mbox{$\iota\: V(G_P)\to V(G_P)$} with $p_{\iota(i)}=-p_i$ for all $i\in V(G_P)$. We say that~an~embedding $q\: V(G_P)\to\RR^d$ of the edge-graph is centrally symmetric if $q_{\iota(i)}=$ $-q_i$ for all $i\in V(G_P)$.

\begin{theorem}[centrally symmetric version]\label{res:centrally_symmetric}
Given a centrally symmetric polytope $P\subset\RR^d$ and a centrally symmetric graph embedding $q\: V(G_P)\to\RR^e$, so that
\begin{myenumerate}
    \item edges in $q$ are at most as long as in $P$, and
    \item vertex-origin distances in $q$ are at least as large as in $P$,
\end{myenumerate}
then $q\simeq \skel(P)$.
\begin{proof}
Since $P$ is centrally symmetric, we have $0\in\relint(P)$ and we can find~Wachspress coordinates $\alpha\in\Int(\Delta_n)$ of the origin in $P$.
Since the Wachspress coordinates are invariant under a linear transformation (as noted in \cref{rem:Wachspress_properties} \itm2), it holds $\alpha_i = \alpha_{\iota(i)}$. 
For the Wachspress map $\phi$ follow
%
$$
\phi(0) 
= \tfrac12 \sum_{i\in V} \alpha_i q_i + \tfrac12 \sum_{i\in V} \alpha_{\iota(i)} q_{\iota(i)}
= \tfrac12 \sum_{i\in V} \alpha_i q_i - \tfrac12\sum_{i\in V} \alpha_i q_i = 0,
$$
The claim then follows via \cref{lem:main}
\end{proof}
\end{theorem}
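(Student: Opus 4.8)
The plan is to reduce the centrally symmetric case to \cref{lem:main} by verifying its hypothesis \itm1, namely that the origin lies in the image $\phi(\Int(P))$ of the Wachspress map. Since $P=-P$ is origin-symmetric, the origin is an interior point of $P$ (indeed its center of symmetry), so it has well-defined Wachspress coordinates $\alpha\in\Int(\Delta_n)$. First I would record the obvious combinatorial symmetry: central symmetry of $P$ induces an involution $\iota$ on $V(G_P)$ with $p_{\iota(i)}=-p_i$, and $\iota$ is a graph automorphism of $G_P$.

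The key step is to show $\alpha_i=\alpha_{\iota(i)}$ for all $i$. For this I would invoke \cref{rem:Wachspress_properties} \itm2: the Wachspress coordinates of the origin are invariant under linear transformations of $P$. Applying the linear map $x\mapsto -x$, which fixes the origin and sends $P$ to itself while permuting vertices via $\iota$, shows that the coordinate vector is invariant under the permutation $\iota$, i.e.\ $\alpha_i=\alpha_{\iota(i)}$. (One should be slightly careful that \cref{rem:Wachspress_properties} is about the origin specifically, which is fine here since the origin is the center of symmetry and is fixed by the map.)

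Granting this, the computation of $\phi(0)$ is a one-line cancellation: pairing up the sum $\sum_i\alpha_i q_i$ over the orbits of $\iota$ and using $q_{\iota(i)}=-q_i$ together with $\alpha_i=\alpha_{\iota(i)}$ gives $\phi(0)=\tfrac12\sum_i\alpha_i q_i+\tfrac12\sum_i\alpha_{\iota(i)}q_{\iota(i)}=\tfrac12\sum_i\alpha_i q_i-\tfrac12\sum_i\alpha_i q_i=0$. (If $\iota$ happens to have fixed points, i.e.\ some $p_i=-p_i=0$, that would contradict $0\in\Int(P)$ and $p_i$ being a vertex, so $\iota$ is fixed-point-free; in any case the pairing argument only needs $q_{\iota(i)}=-q_i$, which forces the contribution of any fixed point to vanish.) Since $0\in\Int(P)$ we certainly have an $x=0\in\Int(P)$ with $\phi(x)=0$, so hypothesis \itm1 of \cref{lem:main} holds; combined with the assumed edge-length inequality \itm1 and vertex-origin distance inequality \itm2 of the theorem (which are exactly \itm2 and \itm3 of \cref{lem:main}), the lemma yields $q\simeq\skel(P)$.

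I do not anticipate a serious obstacle here: the only subtle point is making sure the linear-invariance statement for Wachspress coordinates genuinely applies at the origin and genuinely gives the permutation-equivariance $\alpha_i=\alpha_{\iota(i)}$ rather than just some weaker invariance; once that is in place, everything reduces to \cref{lem:main}, which does the real work. An alternative, more self-contained route to $\alpha_i=\alpha_{\iota(i)}$ would be to argue directly from the geometric formula \eqref{eq:geometric_definition}: central symmetry of $P$ makes the polar dual $P^\circ$ centrally symmetric as well, so the dual facet $F_{\iota(i)}$ is the reflection of $F_i$ and has the same relative volume, while $\|p_{\iota(i)}\|=\|p_i\|$; hence $\tilde\alpha_{\iota(i)}=\tilde\alpha_i$ and the same holds after normalization.
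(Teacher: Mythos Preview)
Your proposal is correct and follows essentially the same route as the paper: use the linear-invariance of Wachspress coordinates (\cref{rem:Wachspress_properties}~\itm2) under $x\mapsto -x$ to get $\alpha_i=\alpha_{\iota(i)}$, cancel $\phi(0)=0$, and apply \cref{lem:main}. Your remarks on fixed points of $\iota$ and the alternative derivation of $\alpha_i=\alpha_{\iota(i)}$ via \eqref{eq:geometric_definition} are sound but not needed for the argument.
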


It is clear that \cref{res:centrally_symmetric} can be adapted to work with other types of symmetry that uniquely fix the origin, such as irreducible symmetry groups.

\Cref{res:centrally_symmetric} has a natural interpretation in the language of classical rigidity~theory, where it asserts the universal rigidity of a certain tensegrity framework. In this form it was proven up to dimension three by Connelly \cite[Theorem 5.1]{connelly2006stress}. We elaborate further on this in \cref{sec:classical_rigidity}.

It is now tempting to conclude the unique reconstruction version of \Cref{res:centrally_symmetric}, answering \cref{conj:main_rigid_intro} for centrally symmetric polytope.
There is~however~a~subtlety: 
our notion of ``central symmetry'' for the graph embedding $q\:V(G_P)\to\RR^e$ as used in \cref{res:centrally_symmetric} has been relative to $P$, in that it forces $q$ to have the same pairs of antipodal vertices as $P$.
It is however not true that any two centrally symmetric polytopes with the same edge-graph have this relation. 
David E.\ Speyer \cite{antipodalNotUnique} constructed a 12-vertex 4-polytope whose edge-graph has an automorphism that does not preserve antipodality of vertices.

\subsection{Local uniqueness}
\label{sec:local_rigidity}


Given a polytope $P\subset\RR^d$ consider the space 
$$\mathcal E_P:=\{q:V(G_P)\to\RR^d\}$$
of $d$-dimensional embeddings of $G_P$.
We then have $\skel(P)\in\mathcal E_P$.
Since $\mathcal E_P\cong~\RR^{n\x d}$ (in some reasonable sense) we can pull back a metric $\mu\:\mathcal E_P\times\mathcal E_P\to\RR_{+}$.\;



\begin{theorem}[local version]\label{thm:main_local}
Given a polytope $P\subset\RR^d$ with $0\in\Int(P)$, there~exists an $\eps >0$ with the following property: if $q\:V(G_P)\to\RR^d$ is an embedding~with
\begin{myenumerate}
    \item $q$ is $\eps$-close to $\skel(P)$, \ie\ $\mu(q,\skel(P))<\eps$,
    \item edges in $q$ are at most as long as in $P$, and
    \item vertex-origin distances in $q$ are at least as large as in $P$,
\end{myenumerate}
then $q\simeq \skel(P)$.
\end{theorem}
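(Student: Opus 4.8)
The strategy is to reduce everything to \cref{lem:main}: it suffices to produce a point $x\in\Int(P)$ with $\phi(x)=0$, where $\phi\:P\to\conv(q)$, $\phi(x)=\sum_i\alpha_i(x)q_i$, is the Wachspress map (\cref{def:Wachspress_map}) and $\alpha(x)\in\Delta_n$ are the Wachspress coordinates of $x$ \emph{in $P$} (which depend on $P$ only, not on $q$). Indeed, for such an $x$ condition \cref{lem:main} \itm1 holds since $\|\phi(x)\|=0\le\|x\|$, while \cref{lem:main} \itm2 and \itm3 are exactly the hypotheses \itm2 and \itm3 of the theorem, so \cref{lem:main} already yields $q\simeq\skel(P)$. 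Note that for $q=\skel(P)$ one has $\phi(x)=\sum_i\alpha_i(x)p_i=x$, i.e.\ $\phi=\id_P$; the content of the theorem is therefore that $0$ stays inside $\phi(\Int(P))$ when $q$ is perturbed slightly.

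First I would fix, once and for all, a radius $\rho>0$ with $\overline{B}_\rho\subset\Int(P)$, where $\overline{B}_\rho\subset\RR^d$ denotes the closed Euclidean ball of radius $\rho$ about the origin; this is possible precisely because $0\in\Int(P)$ (so $P$ is full-dimensional). Since $\mu$ induces the standard topology on $\mathcal E_P\cong\RR^{n\times d}$, I may then choose $\eps>0$ small enough that $\mu(q,\skel(P))<\eps$ entails $\|q_i-p_i\|<\rho$ for every $i\in V$. For such $q$ and \emph{every} $x\in P$ this gives the uniform bound
$$\|\phi(x)-x\|=\Big\|\sum_i\alpha_i(x)(q_i-p_i)\Big\|\le\sum_i\alpha_i(x)\,\|q_i-p_i\|<\rho,$$
using $x=\sum_i\alpha_i(x)p_i$ together with $\alpha(x)\in\Delta_n$.

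Next I would consider the continuous map $g\:\overline{B}_\rho\to\RR^d$, $g(x):=x-\phi(x)$ (continuity of $\phi$ follows from that of the Wachspress coordinates, \cref{rem:Wachspress_properties} \itm1). By the bound above, $\|g(x)\|<\rho$ for all $x\in\overline{B}_\rho$, so $g$ is a continuous self-map of the compact convex set $\overline{B}_\rho$; Brouwer's fixed point theorem gives a fixed point $x_0=g(x_0)$. Then $\phi(x_0)=x_0-g(x_0)=0$ and $\|x_0\|=\|g(x_0)\|<\rho$, hence $x_0\in\Int(P)$. Feeding this $x_0$ into \cref{lem:main} as in the first paragraph yields $q\simeq\skel(P)$, completing the proof.

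I do not expect a genuine obstacle here: all the weight is carried by \cref{res:expansion_main_result} and the reduction \cref{lem:main}, and the local case only needs that $0$ survives in the image of the Wachspress map under a small perturbation of $q$ — the short Brouwer argument above. The only points needing a little care are that the perturbation estimate for $\phi$ be uniform over all of $P$ (ensured by $\alpha(x)\in\Delta_n$), that the fixed point land in the \emph{open} interior of $P$ rather than on $\partial P$ (ensured by the strict inequality $\|g(x)\|<\rho$), and that the threshold $\rho$, and hence $\eps$, depend only on $P$.
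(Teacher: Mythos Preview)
Your proof is correct and follows the same overall strategy as the paper --- reduce to \cref{lem:main} by exhibiting $x\in\Int(P)$ with $\phi(x)=0$ --- but the execution of that step is genuinely different and in fact cleaner. The paper argues via uniform continuity of $(q,x)\mapsto\phi_q(x)$ on a compact set, builds a linear homotopy $q(t)=(1-t)\skel(P)+tq$, and then invokes the homological \cref{res:topological_argument} to conclude that the origin survives in the image. You instead exploit the barycentric identity $x=\sum_i\alpha_i(x)p_i$ to get the explicit bound $\|\phi(x)-x\|\le\max_i\|q_i-p_i\|<\rho$ uniformly in $x$, which turns the problem into a one-line application of Brouwer's fixed point theorem to $g(x)=x-\phi(x)$ on $\overline B_\rho$. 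Your route is more elementary (Brouwer instead of homology) and avoids the compactness and homotopy bookkeeping entirely; the paper's route has the advantage that \cref{res:topological_argument} is a reusable tool, later applied again in the combinatorially-equivalent case (\cref{res:Wachspress_map_surjective}), where no such direct perturbation estimate is available.
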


\Cref{thm:main_local} as well can be naturally interpreted in the language of rigidity~theory (see \cref{sec:classical_rigidity}).
The proof below makes no use of this language. 
%




In order to prove \cref{thm:main_local} we again show $0\in\phi(\Int(P))$, which requries more work this time: since $0\in\Int(P)$, there exists an $\eps$-neighborhood $B_\eps(0)\subset P$~of the origin. 
The hope is that for a sufficiently small perturbation of the vertices of~$P$~the image of $B_\eps(0)$ under $\phi$ is still a neighborhood of the origin.

This hope is formalized and verified in the following lemma, which we separated from the proof of \cref{thm:main_local} to reuse it in \cref{sec:combinatorial_equivalence}.
Its proof is standard and is included in \cref{sec:topological_argument}:


\begin{lemmaX}{D.1}
Let $K\subset\RR^d$ be a compact convex set with $0\in\Int(K)$ and $f\:K\times[0,1]$ $\to\RR^d$ a homotopy with $f(\free,0)=\id_K$. 
If the restriction $f|_{\partial K}\: \partial K\times[0,1]\to\RR^d$ yields a homotopy of $\partial K$ in $\RR^d\nozero$, then $0\in\Int f(K,1)$.
\end{lemmaX}

In other words: if a \enquote{well-behaved} set $K$ contains the origin in its interior,\nlspace and~it is~deformed so that its boundary never crosses the origin, then the origin stays~in\-side.

\begin{proof}[Proof of \cref{thm:main_local}]
Since $0\in\Int(P)$ there exists a $\delta>0$ with $B_\delta(0)\subset P$.

Fix some compact neighborhood $N\subset\mathcal E_P$ of $\skel(P)$.
Then $N\times P$ is compact in $\mathcal E_P\times \RR^d$ and the map
$$N\times P\to\RR^d,\;\;(q,x) \mapsto \phi_q(x):=\sum_i \alpha_i(x)q_i$$
is uniformly continuous: there exists an $\epsilon>0$ so that whenever $\mu(q,q')+\|x-x'\|< \eps$, we have $\|\phi_q(x)-\phi_{q'}(x')\|<\delta/2$. 
We can assume that $\eps$ is sufficiently small,\nls so~that $B_\eps(\skel(P))\subset N$.
We show that this $\eps$ satisfies the statement of the theorem.

Suppose that $q$ is $\eps$-close to $\skel(P)$, then
$$\mu(\skel(P),q) + \|x-x\|<\eps \;\,\implies\;\, \|x-\phi_q(x)\|=\|\phi_{\skel(P)}(x) - \phi_q(x)\|<\delta/2.$$
The same is true when replacing $q$ by any linear interpolation $q(t):=(1-t)\skel(P)+t q$ with $t\in[0,1]$.
Define the following homotopy:
$$f\: B_\delta(0)\times[0,1]\to\RR^d,\quad (x,t)\mapsto \phi_{q(t)}(x).$$
We have $f(\free,0)=\id$. 
That is, if $x\in\partial B_\delta(0)$ then $\|f(x,0)\|=\delta$, as well as
$$\|f(x,t)\| \ge \|f(x,0)\| - \|f(x,0)-f(x,t)\| \ge \delta - \delta/2=\delta/2,$$
and $f(x,t)\not=0$ for all $t\in[0,1]$.
Since $B_\delta(0)$ is compact and convex, the homotopy $f$ satisfies the conditions of \cref{res:topological_argument} and we conclude $0\in f(B_\delta(0),1)=\phi_q(B_\delta(0))$ $\subseteq\phi_q(\Int(P))$.
Then $q\simeq \skel(P)$ follows via \cref{lem:main}.
\end{proof}

\begin{figure}[h!]
    \centering
    \includegraphics[width=0.7\textwidth]{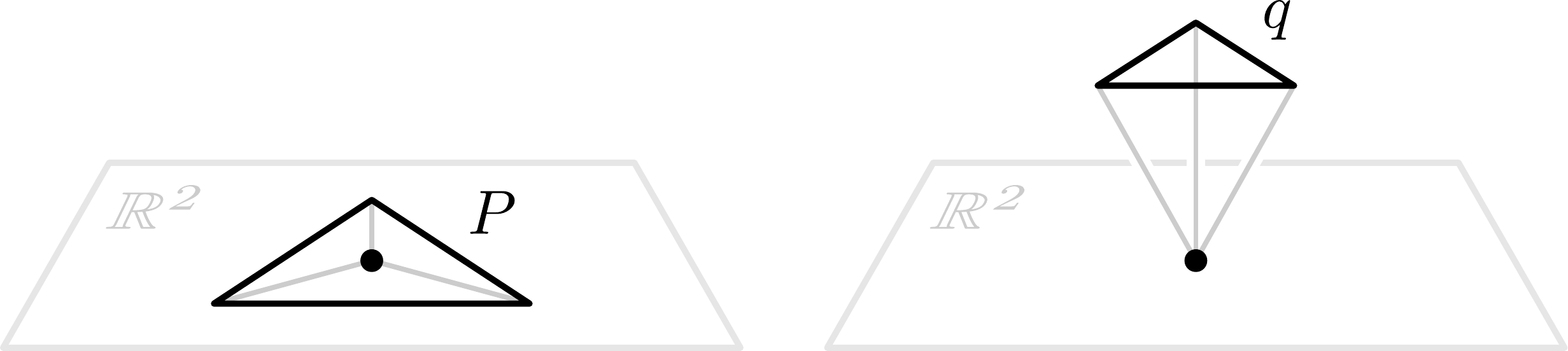}
    \caption{If $q$ in \cref{thm:main_local} is not restricted to $\aff(P)$, the \mbox{vertex-ori}\-gin distances can be increased by moving out of the affine hull.}
    \label{fig:dimension_increase_ex}
\end{figure}

The polytope $P$ in \cref{thm:main_local} is assumed to be full-dimensional.
This is necessary, since allowing $\skel(P)$ to deform beyond its initial affine hull already permits counterexamples such as shown in \cref{fig:dimension_increase_ex}. Even restricting to deformations with $0\in\Int\conv(q)$ is not sufficient, as shown in the next example:

\begin{example}
\label{ex:cube_counterexample_2}
Consider the 3-cube as embedded in $\RR^3\times\{0\}\subset\RR^3\times\RR^2\cong\RR^5$.
Let $p_1,...,p_8\in\RR^3$ $\times$ $\{0\}$ be its vertices, and let $q_1,...,q_8\in\{0\}\times\RR^2$ be the vertices as embedded in \cref{fig:counterexample_cube} (on the right). Since both share the same edge lengths and vertex-origin distances, so does the em\-bedding $tp+sq$ whenever $t^2+s^2=1$.
Observe further that for both $p_i$ and $q_i$~the origin can be written as a convex combination using the same coefficients $\alpha\in\Delta_n$ (use an $\alpha$ with $\alpha_i=\alpha_j$ whenever $p_i=-p_j$). It follows $0\in$ $\Int\conv(tp+sq)$. 
\end{example}






\subsection{Combinatorially equivalent polytopes}
\label{sec:combinatorial_equivalence}

In this section we consider combi\-natorially equivalent polytopes $P,Q\subset\RR^d$ and prove the following:

\begin{theorem}[combinatorial equivalent version]\label{thm:main_comb_eq}
Let $P,Q\subset\RR^d$ be combinatorially equivalent polytopes so that
\begin{myenumerate}
    \item $0\in\Int(Q)$
    \item edges in $Q$ are at most as long as in $P$, and
    \item vertex-origin distances in $Q$ are at least as large as in $P$.
\end{myenumerate}
Then $P\cong Q$.
\end{theorem}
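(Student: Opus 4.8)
The strategy is to reduce to \cref{lem:main}, applied with the reference polytope $P$ and the embedding $q:=\skel(Q)$ of the common edge-graph $G_P=G_Q$. Hypotheses \itm2 and \itm3 of the theorem are precisely conditions \itm2 and \itm3 of \cref{lem:main}, so the entire task is to verify its condition \itm1, and for that it is enough to exhibit a point $x\in\Int(P)$ with $\phi(x)=0$, where $\phi\:P\to Q$ is the Wachspress map of \cref{def:Wachspress_map}. Granting this, \cref{lem:main} gives $\skel(Q)\simeq\skel(P)$ via an orthogonal transformation, hence $P$ and $Q$ are isometric.

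First I would record how $\phi$ respects faces. By \cref{rem:Wachspress_properties} \itm1 (applied after translating $x$ to the origin), if $x$ lies in the relative interior of a face $\sigma\in\F(P)$ then the Wachspress coordinates $\alpha(x)$ are supported on the vertices of $\sigma$; since the fixed face-lattice isomorphism carries the vertex set of $\sigma$ onto that of $\sigma_Q$, this yields $\phi(\sigma)\subseteq\sigma_Q$, and in particular $\phi(\partial P)\subseteq\partial Q$. Writing $\psi\:Q\to P$ for the Wachspress map with the roles of $P$ and $Q$ exchanged, the same argument gives $\psi(\sigma_Q)\subseteq\sigma$. Consequently $\phi\circ\psi\:Q\to Q$ is a continuous self-map of $Q$ that maps each face of $Q$ into itself; in particular it maps every facet of $Q$ into itself.

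The key step is then topological. Define the straight-line homotopy $f\:Q\times[0,1]\to\RR^d$ by $f(y,t)=(1-t)\,y+t\,\phi(\psi(y))$, so that $f(\free,0)=\id_Q$ and $f(\free,1)=\phi\circ\psi$. On a facet $F$ of $Q$ both $y$ and $\phi(\psi(y))$ lie in $F$, so by convexity of $F$ the whole segment $f(\{y\}\times[0,1])$ stays inside $F\subseteq\partial Q$; since $0\in\Int Q$ we have $0\notin F$, hence $f|_{\partial Q}$ is a homotopy of $\partial Q$ in $\RR^d\nozero$. Applying \cref{res:topological_argument} with $K=Q$ (legitimate because $0\in\Int Q$ by hypothesis \itm1) gives $0\in\Int f(Q,1)=\Int\big((\phi\circ\psi)(Q)\big)\subseteq\phi(\psi(Q))\subseteq\phi(P)$. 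Finally, since $\phi(\partial P)\subseteq\partial Q$ and $0\notin\partial Q$, any $x\in P$ with $\phi(x)=0$ must lie in $\Int P$; such an $x$ exists by the inclusion just obtained. This establishes $0\in\phi(\Int P)$, and \cref{lem:main} concludes the proof.

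\textbf{Main obstacle.} Everything rests on the single point $0\in\phi(\Int P)$, and this is the only place where combinatorial equivalence (as opposed to a mere common edge-graph) is used: it supplies the face structure on which the homotopy argument lives, and it is exactly what fails in \cref{ex:cube}, where $0$ genuinely leaves $\phi(\Int P)$. It is tempting to instead prove that $\phi$ is a homeomorphism $P\to Q$, but that is more than is needed; the face-preserving property of $\phi\circ\psi$ together with the degree-type statement \cref{res:topological_argument} is the efficient route. One should also keep in mind that $\phi$ must be the Wachspress map specifically, since both the face-support property in \cref{rem:Wachspress_properties} \itm1 and the estimate imported through \cref{lem:main}/\cref{res:expansion_main_result} are features of Wachspress coordinates and not of arbitrary generalized barycentric coordinates.
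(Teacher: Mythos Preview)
Your proof is correct and follows essentially the same route as the paper: face-preservation of the Wachspress maps, the composition $\phi\circ\psi\:Q\to Q$, a straight-line homotopy on $Q$, the topological lemma (\cref{res:topological_argument}) with $K=Q$, and finally \cref{lem:main}. The one noteworthy difference is that the paper first proves full surjectivity of $\phi$ (\cref{res:Wachspress_map_surjective}) by induction on dimension and only then picks a preimage of $0$, whereas you go straight for the single point $0\in\phi(\Int P)$ and thereby avoid the induction altogether; your version is a legitimate streamlining, while the paper's version yields the surjectivity of the Wachspress map as a result of independent interest.
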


In particular, since the combinatorics of polytopes up to dimension three~is~deter\-mined by the edge-graph, this proves \cref{conj:main} for $d,e\le 3$.

Once again the proof uses \cref{lem:main}.
Since $0\in\Int(Q)$, we can verify \mbox{$0\in\phi(\Int(P))$} by showing that the Wachspress map $\phi\: P\to Q$ is surjective.
This statement is of inde\-pendent interest, since the question whether $\phi$ is \emph{bijective} is a \mbox{well-known} open problem for $d\ge 3$ (see \cref{conj:Wachspress_injective}).
Our proof of surjectivity uses~\cref{res:topological_argument} and the following:





\begin{lemma}\label{res:Wachspress_map_sends_boundary}
Given a face $\sigma\in\F(P)$, the Wachspress map $\phi$ sends $\sigma$ onto the~corresponding face $\sigma_Q\in\F(Q)$.
In particular, $\phi$ sends $\partial P$ onto $\partial Q$.
\end{lemma}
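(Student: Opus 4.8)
The plan is to prove the two inclusions $\phi(\sigma)\subseteq\sigma_Q$ and $\sigma_Q\subseteq\phi(\sigma)$ separately. The first is a soft consequence of the \emph{locality} of Wachspress coordinates; the second needs a degree-theoretic argument and is where the topological lemma \cref{res:topological_argument} enters.

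For $\phi(\sigma)\subseteq\sigma_Q$: given $x\in\sigma$, let $\tau\in\F(P)$ be the smallest face of $P$ containing $x$; then $\tau\subseteq\sigma$ and $x\in\relint(\tau)$, so by \cref{rem:Wachspress_properties} \itm1 we have $\alpha_i(x)>0$ precisely for those $i$ with $p_i\in\tau$. Hence $\phi(x)=\sum_{p_i\in\tau}\alpha_i(x)q_i$ is a convex combination of the vertices of $\tau_Q$, so $\phi(x)\in\tau_Q\subseteq\sigma_Q$ (using that the fixed face-lattice isomorphism is compatible with vertex labels, so $p_i\in\tau\iff q_i\in\tau_Q$). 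In the special case $\sigma=\{p_i\}$ this already forces $\alpha(p_i)=e_i$ and $\phi(p_i)=q_i$, which settles the $0$-dimensional faces.

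For the reverse inclusion it suffices, combining the first inclusion applied to every proper face with the decomposition $\sigma_Q=\relint(\sigma_Q)\cup\bigcup_{\tau\subsetneq\sigma}\relint(\tau_Q)$, to show that $\relint(\sigma_Q)\subseteq\phi(\sigma)$ for every face $\sigma$. This is trivial for $\dim\sigma=0$, so let $k:=\dim\sigma\ge1$ and fix $z\in\relint(\sigma_Q)$. Identify $\aff(\sigma)$ and $\aff(\sigma_Q)$ with $\RR^k$, translating so that $z=0$ lies in the $\RR^k$-interior of the compact convex set $\sigma_Q$. Since $P$ and $Q$ are combinatorially equivalent, I would fix a homeomorphism $h\:\sigma\to\sigma_Q$ carrying each face of $\sigma$ onto the corresponding face of $\sigma_Q$ (the standard iterated-coning construction over the skeleta provides one). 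Now consider
\[
 f\:\sigma_Q\times[0,1]\to\RR^k,\qquad f(y,t):=(1-t)\,y+t\,\phi\big(h^{-1}(y)\big).
\]
Then $f(\free,0)=\id_{\sigma_Q}$, and for $y\in\partial\sigma_Q$ lying in the proper face $\rho_Q$ of $\sigma_Q$ one has $h^{-1}(y)\in\rho$, hence $\phi(h^{-1}(y))\in\rho_Q$ by the first inclusion, so $f(y,t)\in\rho_Q$ for all $t$; since a proper face of $\sigma_Q$ is disjoint from $\relint(\sigma_Q)\ni 0$, this gives $f(y,t)\neq 0$. Thus \cref{res:topological_argument} applies with $K=\sigma_Q$ and yields $0\in\Int f(\sigma_Q,1)=\Int\phi\big(h^{-1}(\sigma_Q)\big)=\Int\phi(\sigma)\subseteq\phi(\sigma)$, i.e.\ $z\in\phi(\sigma)$. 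As $z$ was arbitrary, $\relint(\sigma_Q)\subseteq\phi(\sigma)$, which together with the first inclusion gives $\phi(\sigma)=\sigma_Q$. The ``in particular'' then follows from $\partial P=\bigcup\sigma$ and $\partial Q=\bigcup\sigma_Q$ over the facets $\sigma$ of $P$.

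The hard part is the reverse inclusion, and specifically the point that surjectivity of $\phi$ on $\partial\sigma$ does not by itself force surjectivity on $\sigma$ — a boundary map of degree zero could leave interior points of $\sigma_Q$ uncovered. Since $\phi$ is not known to be injective (its bijectivity is open for $d\ge 3$, cf.\ \cref{conj:Wachspress_injective}), one cannot control $\phi$ directly and must route the degree computation through a reference homeomorphism $h$ and homotopy invariance, which is exactly the role played by \cref{res:topological_argument}. The only place the geometry of Wachspress coordinates is genuinely used is in verifying that this homotopy never meets $z$ on $\partial\sigma$, which is precisely the locality statement (the first inclusion) applied on the proper faces.
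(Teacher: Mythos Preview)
Your proof is correct. Note, though, that you have done more than the paper does at this point: despite the word ``onto'' in the statement, the paper's own proof of this lemma establishes only the inclusion $\phi(\sigma)\subseteq\sigma_Q$ (your first paragraph), and defers surjectivity to the next lemma (\cref{res:Wachspress_map_surjective}), where it is proved for $P$ itself by induction on dimension. Your second paragraph essentially anticipates that argument, carried out face-by-face.

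The one genuine difference is in the choice of auxiliary map. You compose $\phi$ with an arbitrary face-preserving homeomorphism $h\:\sigma\to\sigma_Q$ to obtain a self-map of $\sigma_Q$ that is linearly homotopic to the identity through maps avoiding the chosen interior point; the paper instead composes $\phi$ with the \emph{reverse} Wachspress map $\psi\:Q\to P$ to the same effect. Both work, and both feed into \cref{res:topological_argument} in the same way. The paper's choice is slightly more canonical (no need to invoke the existence of $h$), while yours has the mild advantage that the argument runs directly on every face without an explicit induction on dimension.
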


\begin{proof}
Given a point $x\in\relint(\sigma)$ with Wachspress coordinates $\alpha\in\Delta_n$, the coeffi\-cient $\alpha_i$ is non-zero if and only if the vertex $p_i$ is in $\sigma$ (\cref{rem:Wachspress_properties} \itm1). 
The claim $\phi(x)\in\sigma_Q$ follows immediately.
\end{proof}

\begin{lemma}\label{res:Wachspress_map_surjective}
The Wachspress map $\phi\:P\to Q$ is surjective.
\begin{proof}
We proceed by induction on the dimension $d$ of $P$.
For $d=1$ the Wachspress map is linear and the claim is trivially true. 
For $d>1$ recall that $\phi$ sends $\partial P$ to~$\partial Q$ (by \cref{res:Wachspress_map_sends_boundary}).
By induction hypothesis, $\phi$ is surjective on each proper face,\nls thus sur\-jective on all of $\partial P$.


To show surjectivity in the interior, we fix $x\in\Int(Q)$; we show $x\in\im\phi$.
Let $\psi\:Q\to P$ be the Wachspress map in the other direction (which is usually \emph{not} the inverse of $\phi$) and define $\rho:=\phi\circ\psi\:Q\to Q$.
Note that by \cref{res:Wachspress_map_sends_boundary} $\rho$ sends each face of $Q$ to itself and is therefore homotopic to the identity on $Q$ via the following linear homotopy:
$$f\:Q\times[0,1]\to Q,\; (y,t)\mapsto (1-t)x+t\rho(x).$$
Since faces of $Q$ are closed under convex combination, $f(\free,t)$ sends $\partial Q$ to itself for all $t\in[0,1]$.
Thus, $f$ satisfies the assumptions of \cref{res:topological_argument} (with $K$ chosen as $Q$), and therefore $x\in f(Q,1)=\im \rho\subset\im\phi$.
\end{proof}
\end{lemma}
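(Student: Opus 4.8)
The plan is to prove surjectivity of $\phi$ by induction on $d=\dim P$, exploiting that each $k$-face $\sigma$ of $P$ is a $k$-polytope combinatorially equivalent to the corresponding face $\sigma_Q$ of $Q$, and that $\phi$ restricts on $\sigma$ to the Wachspress map of $\sigma$: by \cref{rem:Wachspress_properties} \itm1 the coordinate $\alpha_i(x)$ vanishes for $p_i\notin\sigma$ whenever $x\in\relint\sigma$, and the surviving coordinates are, by the standard restriction property of Wachspress coordinates, exactly those of the face $\sigma$. The base case $d=1$ is immediate: on a segment $\phi$ is affine in the single barycentric parameter and clearly maps $P$ onto $Q$.

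For $d>1$ I would treat $\partial Q$ and $\Int Q$ separately. The boundary is the only place the induction is used: for every facet $\sigma\in\F_{d-1}(P)$ the restriction $\phi|_\sigma\:\sigma\to\sigma_Q$ is onto by the induction hypothesis, so $\phi(\partial P)=\partial Q$, and in particular $\partial Q\subseteq\im\phi$ (this also sharpens \cref{res:Wachspress_map_sends_boundary} from an \enquote{into} to an \enquote{onto} statement).

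For the interior, fix an arbitrary $x\in\Int Q$; I want $x\in\im\phi$. Let $\psi\:Q\to P$ be the Wachspress map in the reverse direction and set $\rho:=\phi\circ\psi\:Q\to Q$. Applying \cref{res:Wachspress_map_sends_boundary} first to $\psi$ and then to $\phi$ shows that $\rho$ maps every face $\tau\in\F(Q)$ into $\tau$. Hence the straight-line homotopy $h\:Q\times[0,1]\to Q$, $h(y,t)=(1-t)y+t\rho(y)$, is well defined by convexity of $Q$, satisfies $h(\cdot,0)=\id_Q$, and --- because each face is convex and $\rho$-invariant --- carries every face of $Q$ into itself; in particular it keeps $\partial Q$ inside $\partial Q$, so $h(y,t)\neq x$ for all $y\in\partial Q$ and $t\in[0,1]$. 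Now \cref{res:topological_argument}, applied to the compact convex set $Q$ with $x$ in the role of the origin (\ie\ to the translate $Q-x$), gives $x\in\Int h(Q,1)=\Int\rho(Q)\subseteq\phi(\psi(Q))\subseteq\im\phi$. Since $x$ was an arbitrary interior point, $\Int Q\subseteq\im\phi$, and with the boundary case this gives $\im\phi=Q$.

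I expect the interior step to be the main obstacle, and within it the crucial point is ensuring that the deformation never pushes $\partial Q$ across the chosen point $x$; this is exactly what the face-preservation of $\rho=\phi\circ\psi$ buys us, after which the topological lemma \cref{res:topological_argument} closes the argument. An essentially equivalent route is a degree argument --- the restriction $\phi\:\partial P\to\partial Q$ has degree $\pm1$ by induction, forcing surjectivity --- but formalizing a mapping degree for these, in general non-injective, maps amounts to the same work. The remaining point that deserves a careful check is the restriction property used in the boundary step: that a Wachspress coordinate function of $P$, restricted to a face $\sigma$, coincides with the corresponding Wachspress coordinate function of $\sigma$.
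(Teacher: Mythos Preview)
Your proof is correct and follows essentially the same route as the paper: induction on $d$ for the boundary, then the composite $\rho=\phi\circ\psi$ together with the straight-line homotopy and \cref{res:topological_argument} for the interior. You even flag the one subtlety the paper glosses over, namely that $\phi|_\sigma$ is the Wachspress map of the face $\sigma$ (needed to invoke the induction hypothesis); this does hold and follows from the limiting description in \cref{rem:Wachspress_properties}\,\itm1 together with the variational definition \eqref{eq:variational_definition}.
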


The proof of \cref{thm:main_comb_eq} follows immediately:

\begin{proof}[Proof of \cref{thm:main_comb_eq}]
Since $0\in\Int(Q)$ and the Wachspress map $\phi\:P\to Q$~is~sur\-jective (by \cref{res:Wachspress_map_surjective}), there exists $x\in P$ with $\phi(x)=0$.
Since $\phi(\partial P)=\partial Q$ (by \cref{res:Wachspress_map_sends_boundary}), we must have $x\in\Int(P)$.
$P\simeq Q$ then follows via \cref{lem:main}.
\end{proof}



\begin{corollary}
\label{res:comb_type_rigid}
A polytope with the origin in its interior is uniquely determined~by its face-lattice, its edge lengths and its vertex-origin distances.
\end{corollary}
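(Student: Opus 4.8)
The plan is to deduce the corollary directly from \cref{thm:main_comb_eq}, of which it is essentially an unwinding. Suppose $P$ and $Q$ are polytopes, each containing the origin in its interior, together with a face-lattice isomorphism $\F(P)\xrightarrow{\sim}\F(Q)$ compatible with the vertex labelling, and suppose that corresponding edges have equal length and corresponding vertices have equal distance to the origin. First I would observe that combinatorially equivalent polytopes have the same dimension (the dimension is read off from the length of a maximal chain in the face-lattice), so, passing to affine hulls if necessary, we may take $P,Q\subset\RR^d$ for a common $d$; the given isomorphism then exhibits $P$ and $Q$ as combinatorially equivalent in the sense fixed in \cref{sec:setting}.

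Now all three hypotheses of \cref{thm:main_comb_eq} are in force: \itm1 is precisely the assumption $0\in\Int(Q)$; \itm2 holds because corresponding edges of $Q$ are in fact exactly as long as in $P$, hence in particular at most as long; and \itm3 holds because corresponding vertex-origin distances in $Q$ are exactly, hence at least, as large as in $P$. Therefore \cref{thm:main_comb_eq} applies and gives $P\simeq Q$, i.e.\ $P$ and $Q$ are isometric via an orthogonal transformation fixing the origin (the distinguished point carrying the metric data). This is exactly the claimed unique reconstruction.

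There is no genuine obstacle here beyond what already went into \cref{thm:main_comb_eq} (and, through it, \cref{lem:main} and \cref{res:expansion_main_result}); the only points worth flagging are that combinatorial equivalence forces equality of dimensions, so one never meets the harder mixed-dimensional situation of \cref{conj:main}, and that the apparent asymmetry of \cref{thm:main_comb_eq} — the interiority condition being imposed on $Q$ alone — is immaterial here, since in the reconstruction setting both polytopes carry the marked point in their interior and one could equally run the argument with the roles of $P$ and $Q$ exchanged.
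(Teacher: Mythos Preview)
Your proposal is correct and takes essentially the same approach as the paper: the corollary is stated immediately after \cref{thm:main_comb_eq} without an explicit proof, being an obvious specialization of that theorem to the case where the inequalities in \itm2 and \itm3 are equalities. Your additional remark that combinatorial equivalence forces equal dimensions, and hence one may work in a common ambient $\RR^d$, is a reasonable point to spell out but is implicit in the paper's conventions of \cref{sec:setting}.
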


If the origin lies not in $P$ then a unique reconstruction is not guaranteed (re\-call \cref{fig:origin_outside_tnontriv_ex}).
However, if $0\in\partial P$ then we can say more.
Recall the \emph{tangent cone} of $P$ at a face $\sigma\in\F(P)$: 
%
$$T_P(\sigma):=\cone\{x-y\mid x\in P,\, y\in\sigma\}.$$

\begin{theorem}
Let $P,Q\subset\RR^d$ be combinatorially equivalent polytopes with~the~following properties:
\begin{myenumerate}
    \item $0\in\relint(\sigma_Q)$ for some face $\sigma_Q\in\F(Q)$, $\sigma_P$ is the corresponding~face~in~$P$, and $P$ and $Q$ have isometric tangent cones at $\sigma_P$ and $\sigma_Q$.
    \item edges in $Q$ are at most as long as in $P$.
    \item vertex-origin distances in $Q$ are at least as large as in $P$.
\end{myenumerate}
Then $P\cong Q$.
\end{theorem}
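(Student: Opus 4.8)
\emph{Proof plan.}
The plan is to reduce this to \cref{thm:main_comb_eq} by nudging the distinguished point slightly into the interior of $P$ (and of $Q$). Throughout I use that, $P$ and $Q$ being combinatorially equivalent, the Wachspress map $\phi\:P\to Q$ is surjective (\cref{res:Wachspress_map_surjective}) and carries $\relint(\sigma_P)$ onto $\relint(\sigma_Q)$ (\cref{res:Wachspress_map_sends_boundary}).

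First I would establish a \emph{monotonicity property of $\phi$}: $\|\phi(x)\|\ge\|x\|$ for all $x\in P$. For $x\in\Int(P)$ with Wachspress coordinates $\alpha\in\Int\Delta_n$ this is exactly the computation behind \cref{lem:main} read as a chain
\[
\|\phi(x)\|^2+\|q\|_\alpha^2=\textstyle\sum_i\alpha_i\|q_i\|^2\ge\sum_i\alpha_i\|p_i\|^2=\|x\|^2+\|P\|_\alpha^2\ge\|x\|^2+\|q\|_\alpha^2,
\]
the first inequality coming from the hypothesis that vertex-origin distances in $Q$ are at least as large as in $P$, and the second from \cref{res:expansion_main_result} together with the hypothesis that edges in $Q$ are at most as long as in $P$; the boundary case follows by continuity. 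Now pick $x_0\in\relint(\sigma_P)$ with $\phi(x_0)=0$ (possible since $\phi$ is onto and $0\in\relint(\sigma_Q)$); monotonicity forces $\|x_0\|\le\|\phi(x_0)\|=0$, so $x_0=0$. Hence in fact $0\in\relint(\sigma_P)$ and $\phi(0)=0$ — and if $\sigma_P=P$ this is already \cref{thm:main_comb_eq}, so I may assume $\sigma_P\subsetneq P$.

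Next I would use the tangent-cone hypothesis to localize. Since $0$ lies in the relative interiors of $\sigma_P$ and $\sigma_Q$ we have $T_P(\sigma_P)=\cone(P)$ and $T_Q(\sigma_Q)=\cone(Q)$, two cones whose common lineality space is $\aff(\sigma_P)=\aff(\sigma_Q)$; any isometry taking the first onto the second must have its translation part inside that lineality space, hence may be chosen orthogonal. Applying it to $P$ — which changes neither edge lengths nor vertex-origin distances — I may assume $\cone(P)=\cone(Q)=:C$, and then the tangent-cone property gives an $\varepsilon>0$ with $P\cap B_\varepsilon(0)=C\cap B_\varepsilon(0)=Q\cap B_\varepsilon(0)$: \emph{$P$ and $Q$ coincide in a neighbourhood of the origin}.

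Finally I would re-centre. Choose $\hat x\in\Int(P)\cap B_\varepsilon(0)$ with $\hat x\perp\aff(\sigma_P)$ (possible since $C$ is full-dimensional); by the previous paragraph $\hat x\in\Int(Q)$ too. Since $\|q_i-\hat x\|^2-\|p_i-\hat x\|^2=(\|q_i\|^2-\|p_i\|^2)-2\langle q_i-p_i,\hat x\rangle$ and the first summand is always $\ge0$, the desired inequality $\|q_i-\hat x\|\ge\|p_i-\hat x\|$ holds automatically for $p_i\in\sigma_P$ (there the second summand vanishes, as $q_i,p_i\in\aff(\sigma_P)$) and, for $\hat x$ small, whenever $\|q_i\|>\|p_i\|$. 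Granting it for \emph{all} $i$, the hypotheses of \cref{thm:main_comb_eq} hold with $\hat x$ in place of the origin, and $P\cong Q$ follows. The step I expect to be the real difficulty is precisely the leftover: for vertices $p_i\notin\sigma_P$ with $\|q_i\|=\|p_i\|$, shrinking $\hat x$ does not help, and one has to orient $\hat x$ inside the transverse cone $C\cap\aff(\sigma_P)^\perp$ so that $\langle q_i-p_i,\hat x\rangle\le 0$ for all such $i$ at once. For vertices lying in faces that contain $\sigma_P$ this should be forced by the common local cone $C$ (those $q_i-p_i$ point ``outward''), but in general I would expect to need either a sharper choice of $\hat x$ or an induction on the codimension of $\sigma_P$, shaving off one transverse dimension at a time.
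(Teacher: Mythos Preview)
Your monotonicity observation $\|\phi(x)\|\ge\|x\|$ is correct and is a pleasant byproduct of the machinery; it cleanly yields $0\in\relint(\sigma_P)$ and $\phi(0)=0$. Your Step~2 is also essentially fine (though the phrase ``common lineality space $\aff(\sigma_P)=\aff(\sigma_Q)$'' is premature --- that equality only holds \emph{after} you apply the orthogonal map, not before).

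The gap you yourself identify in Step~3 is genuine and is not easily closed along the lines you sketch. For vertices with $\|q_i\|=\|p_i\|$ and $p_i\notin\sigma_P$, the condition $\langle q_i-p_i,\hat x\rangle\le 0$ is a linear constraint on $\hat x$, and there is no reason the intersection of all these half-spaces with the transverse cone $C\cap\aff(\sigma_P)^\perp$ should have nonempty interior. Your suggestion to ``orient $\hat x$'' amounts to finding a point in this intersection, which in general need not exist; and the alluded-to induction on codimension has no clear inductive step, since passing to a facet containing $\sigma$ still leaves you with a boundary origin and the same obstruction one dimension down. In short, shifting the origin cannot be made to preserve hypothesis \itm3 without further information about the $q_i-p_i$.

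The paper sidesteps this entirely by going the other way: rather than moving the origin \emph{into} the polytopes, it grows the polytopes \emph{around} the origin. Concretely, it first applies \cref{thm:main_comb_eq} to the faces $\sigma_P,\sigma_Q$ themselves (so $\sigma_P\simeq\sigma_Q$), then uses the tangent-cone isometry and induction on dimension to show that the facets through $\sigma$ are pairwise isometric. It then picks a point $x_Q$ \emph{beyond} $\sigma_Q$ (and its image $x_P$ beyond $\sigma_P$) and forms $P'=\conv(P\cup\{x_P\})$, $Q'=\conv(Q\cup\{x_Q\})$. The new edges, from $x_Q$ to vertices of the facets through $\sigma_Q$, have lengths determined by the tangent cone and the facet shapes --- all of which match --- so $P',Q'$ satisfy the hypotheses of \cref{thm:main_comb_eq} with the origin now in the interior. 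Taking $x_Q\to 0$ gives $P\simeq Q$. The point is that adding a vertex creates new edge-length constraints that are \emph{automatically} equalities, whereas moving the origin creates new vertex-distance constraints that need not go the right way.
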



Property \itm1 is always satisfied if, for example, $\sigma_Q$ is a facet of $Q$, or if $\sigma_Q$~is~a face of co\-dimension two at which $P$ and $Q$ agree in the dihedral angle.

\begin{proof}
The proof is by induction on the dimension $d$ of the polytopes. The induction base $d=1$ is clearly satisfied.
In the following we assume $d\ge 2$.

Note first that we can apply \cref{thm:main_comb_eq} to $\sigma_P$ and $\sigma_Q$ to obtain $\sigma_P\simeq\sigma_Q$ via an orthogonal transformation $T\:\RR^d\to\RR^d$, in particular $0\in\relint(\sigma_P)\subseteq P$.
By \itm1 this transformation extends to the tangent cones at these faces.
Let $F_{1,Q},...,F_{m,Q}\in\F_{d-1}(Q)$ be the facets of $Q$ that contain $\sigma_Q$, and let $F_{1,P},...,F_{m,P}\in\F_{d-1}(P)$ be the corresponding facets in $P$.
Then $F_{i,P}$ and $F_{i,Q}$ too have isometric tangent cones at $\sigma_P$ \resp\ $\sigma_Q$, and $F_{i,P}\simeq F_{i,Q}$ follows by induction hypothesis.

Now choose a point $x_Q\in\RR^d$ \emph{beyond} the face $\sigma_Q$ (\ie\ above all facet-defining hyper\-planes that contain $\sigma_Q$, and below the others) so that $x_P:=T x_Q$ is beyond the face $\sigma_P$.
Consider the polytopes $Q':=\conv(Q\cup\{x_Q\})$ and $P':=\conv(P\cup\{x_P\})$.
Since $x_Q$ lies beyond $\sigma_Q$, each edge of $Q'$ is either an edge of $Q$, or is an edge between $x_Q$ and a vertex of some facet $F_{i,Q}$; and analogously for $P'$.
The lengths of edges incident to $x_Q$ depend only on the shape of the tangent cone and the shapes of the facets $F_{i,Q}$, hence are the same for corresponding edges in $P'$.
Thus, $P'$ and $Q'$ satisfy the preconditions of \cref{thm:main_comb_eq}, and we have $P'\simeq Q'$.

Finally, as $x_Q\to 0$, we have $Q'\to Q$ and $P'\to P$ (in the Hausdorff metric), which shows that $P\simeq Q$.
\end{proof}

Thus, if the origin lies in the interior of $P$ or a facet of $P$ then \cref{thm:main_comb_eq}~applies. If the origin lies in a face of codimension three, then counterexamples exist.

\begin{example}
Consider the pentagons from \cref{fig:origin_outside_tnontriv_ex} as lying in the plane $\RR^2\times\{1\}$, with their former origins now at $(0,0,1)$. Consider the pyramids 
$$P^*:= \conv(P\cup\{0\})\quad\text{and}\quad Q^*:=\conv(Q\cup\{0\}).$$
These polytopes have the origin in a vertex (a face of codimension three), have the same edge-graphs, edge lengths and vertex-origin distances, yet are not isometric. Examples with the origin in a high-dimensional face of codimension three can be constructed by considering prisms over $P^*$ \resp\ $Q^*$.
\end{example}

We do not know whether \cref{thm:main_comb_eq} holds if the origin lies in a face of codimen\-sion two (see \cref{q:codimension_two}).


\subsection{Inscribed polytopes}
\label{sec:inscribed}

It is worthwhile to formulate versions of \cref{thm:main_comb_eq} for inscribed polytopes, that is, polytopes where all vertices lie on a common sphere -- the circumsphere.
For inscribed polytopes we can write down a direct monotone relation between edge lengths and the circumradius. 

\begin{corollary}[inscribed version]\label{res:inscribed_version}
Given two combinatorially equivalent polytopes $P,Q\subset\RR^d$ so that
\begin{myenumerate}
    \item $P$ and $Q$ are inscribed in spheres of radii $r_P$ and $r_Q$ respectively,
    \item $Q$ contains its circumcenter in the interior, and
    \item edges in $Q$ are at most as long as in $P$, 
\end{myenumerate}
Then $r_P\ge r_Q$, with equality if and only if $P\simeq Q$.
\begin{proof}
Translate $P$ and $Q$ so that both circumcenters lie at the origin.
Suppose~that $r_P\le r_Q$. Then all preconditions of \cref{thm:main_comb_eq} are satisfied, which yields $P\simeq Q$, hence $r_P=r_Q$.
\end{proof}
\end{corollary}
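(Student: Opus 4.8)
The plan is to deduce \cref{res:inscribed_version} directly from \cref{thm:main_comb_eq}, exploiting the fact that an inscribed polytope, once its circumcenter is moved to the origin, has all of its vertex-origin distances equal to its circumradius. This converts the ``edge lengths vs.\ circumradius'' statement into the ``edge lengths vs.\ vertex-origin distances'' statement already established.

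First I would translate $P$ and $Q$ so that both circumcenters lie at the origin; this is legitimate because ``inscribed'' means the vertices lie on a common sphere, whose center (the circumcenter) is a well-defined point. After the translation one has $\|p_i\|=r_P$ and $\|q_i\|=r_Q$ for every $i\in V$. Hypothesis \itm3 is unaffected, since edge lengths are translation invariant, and hypothesis \itm2 becomes precisely $0\in\Int(Q)$.

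Next I would argue by contradiction. Suppose $r_P<r_Q$. Then $\|q_i\|=r_Q>r_P=\|p_i\|$ for all $i\in V$, so the vertex-origin distances in $Q$ are at least as large as those in $P$; together with $0\in\Int(Q)$ and $\|q_i-q_j\|\le\|p_i-p_j\|$ for all $ij\in E$, this is exactly the hypothesis list of \cref{thm:main_comb_eq}. That theorem then yields $P\simeq Q$ (via an orthogonal transformation), which forces $r_P=r_Q$, contradicting $r_P<r_Q$. Hence $r_P\ge r_Q$.

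For the equality clause: if $r_P=r_Q$, then after the same translation $\|q_i\|=\|p_i\|$ for all $i$, so again all three hypotheses of \cref{thm:main_comb_eq} hold and $P\simeq Q$; conversely, isometric polytopes trivially have equal circumradii. I do not anticipate any genuine obstacle here — the entire content is carried by \cref{thm:main_comb_eq} — the only points needing a moment's care are the bookkeeping around the common translation and the observation that hypothesis \itm2 is exactly the interiority condition required to invoke that theorem.
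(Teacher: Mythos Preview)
Your proof is correct and follows essentially the same approach as the paper: translate both circumcenters to the origin, then invoke \cref{thm:main_comb_eq}. The paper is marginally slicker in that it assumes $r_P\le r_Q$ (rather than splitting into the strict case and the equality case), which immediately yields $P\simeq Q$ and hence $r_P=r_Q$ in one stroke; but this is purely cosmetic.
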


This variant in particular has already found an application in proving the finitude of so-called \enquote{compact sphere packings} with spheres of only finitely many different radii \cite{kikianty2023compact}.

Interestingly, the corresponding \enquote{unique reconstruction version} does not require any assumptions about the location of the origin or an explicit value for the circum\-radius.
In fact, we do not even need to apply our results, as it already follows from Cauchy's rigidity theorem (\cref{res:Cauchy_rigidity}).

\begin{corollary}
An inscribed polytope of a fixed combinatorial type is uniquely~determined, up to isometry, by its edge lengths.
%
\begin{proof}
The case $d=2$ is straightforward: given any circle, there is only~a~single~way (up to isometry) to place edges of prescribed lengths. Also, there is only~a~single~radius for the circle for which the edges reach around the circle exactly once and close up perfectly.
This proves uniqueness for polygons.

If $P$ is of higher dimension then its 2-dimensional faces are still inscribed, have prescribed edge lengths, and by the 2-dimensional case above, corresponding 2-faces in $P$ and $Q$ are therefore isometric.
Then $P\simeq Q$ follows from Cauchy's~rigidity~theorem (\cref{res:Cauchy_rigidity}).
\end{proof}
\end{corollary}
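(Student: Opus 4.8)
The plan is to reduce to the planar case --- polygons inscribed in a circle --- and then lift to all dimensions via Cauchy's rigidity theorem. For the planar case, let $P,Q\subset\RR^2$ be two convex $n$-gons, each inscribed in a circle, that are combinatorially equivalent and have equal corresponding edge lengths $\ell_1,\dots,\ell_n$ in the common cyclic order of their vertices. The vertices of a convex polygon inscribed in a circle of radius $R$ necessarily occur in cyclic order along that circle, so consecutive vertices $v_i,v_{i+1}$ cut off an arc of angular width $\delta_i\in(0,2\pi)$, with $\sum_i\delta_i=2\pi$ and $\ell_i=2R\sin(\delta_i/2)$. At most one $\delta_i$ can exceed $\pi$ (two such would already sum past $2\pi$), and this happens precisely when the circumcenter lies outside the polygon. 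In the principal case, where both circumcenters are interior, every $\delta_i\in(0,\pi)$, hence $\delta_i=2\arcsin\!\big(\ell_i/(2R)\big)$, so the circumradius is forced to solve
$$\sum_{i=1}^n 2\arcsin\!\Big(\frac{\ell_i}{2R}\Big)=2\pi.$$
The left-hand side is continuous, strictly decreasing in $R$ on $[\tfrac12\max_i\ell_i,\infty)$, and tends to $0$; hence there is at most one solution, so $r_P=r_Q$, the $\delta_i$ coincide for $P$ and $Q$, and $P,Q$ agree after a rigid motion of the circle, i.e.\ $P\simeq Q$. The remaining configurations (one or both circumcenters exterior) are handled the same way: the unique major arc must be the one cut off by the then necessarily unique longest edge, and an analogous monotonicity analysis again pins down $R$ --- one checks in addition that the ``interior'' and ``exterior'' regimes cannot both realize the same cyclic edge-length sequence.

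For $\dim P=\dim Q=d\ge 3$, fix a combinatorial equivalence between $P$ and $Q$. Every $2$-face $F$ of $P$ has all its vertices on $S\cap\aff(F)$, where $S$ is the circumsphere of $P$ and $\aff(F)$ is a plane; this intersection is a circle, so $F$ is inscribed, and the edges of $F$ are edges of $P$, hence of prescribed length. The corresponding $2$-face $F_Q$ of $Q$ is inscribed with the same edge lengths, so the planar case gives $F\simeq F_Q$. Thus $P$ and $Q$ have the same combinatorial type and pairwise congruent $2$-faces, and Cauchy's rigidity theorem (\cref{res:Cauchy_rigidity}) yields $P\simeq Q$.

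The step I expect to be the real obstacle is the claim that the edge lengths alone determine the circumradius. The monotonicity argument settles this cleanly when the circumcenter is interior, but the exterior case requires more care: that at most one arc is major, that it is realized by the strictly longest edge, a separate uniqueness statement for the transcendental equation that then governs $R$, and the exclusion of mixed ``interior''/``exterior'' realizations. Everything else --- the descent through $2$-faces and the appeal to Cauchy's theorem --- is routine.
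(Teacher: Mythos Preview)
Your proposal is correct and follows essentially the same route as the paper: establish the planar case for inscribed polygons, observe that every $2$-face of an inscribed polytope is itself an inscribed polygon with prescribed edge lengths, and then invoke Cauchy's rigidity theorem. The only difference is that you work out the $d=2$ case in detail (monotonicity of the arcsine sum, the interior/exterior circumcenter dichotomy), whereas the paper dismisses it as ``straightforward'' in two sentences; your care there is warranted but not a departure in strategy.
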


\section{Conclusion, further notes and many open questions}
\label{sec:conclusions}

We conjectured that a convex polytope is uniquely determined up to isometry~by its edge-graph, edge lengths and the collection of distances between its vertices and some interior~point, across all dimensions and combinatorial types (\cref{conj:main_rigid_intro}).
We~also~posed~a~more general conjecture expressing the idea that polytope skeleta, given their edge lengths, are maximally expanded (\cref{conj:main}).
We developed techniques~based~on~Wachs\-press coordinates and the so-called Izmestiev matrix that led to us to resolve three relevant special cases: centrally symmetric~polytopes (\cref{res:centrally_symmetric}), small perturbations (\cref{thm:main_local}), and combinatorially equivalent polytopes (\cref{thm:main_comb_eq}).
We feel confident that our approach already highlights the essential difficulties in verifying the general case.


In this section we collected further thoughts on our results, notes on connections to the literature, as well as many questions and future research directions.


\subsection{Consequences of the conjectures}
\label{sec:vast_generalization}


\cref{conj:main_rigid_intro} vastly generalizes several known ``reconstruction from the edge-graph'' results.
The following is a special case of \cref{conj:main_rigid_intro}: an inscribed polytopes with all edges of the same length would~be uniquely determined by its edge-graph.
This includes the following special cases:
\begin{itemize}
    \item The reconstruction of matroids from their base exchange graph: a matroid can be identified with its matroid base polytopes, which is a 01-polytopes (hence inscribed) and has all edges of length $\sqrt 2$.
    This reconstruction has been initially proven in \cite{holzmann1973graphical} and recently rediscovered in \cite{pineda2022reconstructibility}.
    \item The reconstruction of simultaneously vertex- and edge-transitive polytopes from their edge-graph: this was proven in \cite{winter2020symmetric,winter2021spectral}, essentially using the tools of this article.
\end{itemize}
It would imply an analogous reconstruction from the edge-graph for classes of~polytopes such as the uniform polytopes or higher-dimensional inscribed Johnson~solids \cite{johnson1966}.

Secondly, a positive answer to \cref{conj:main_rigid_intro} would also resolve Question~6.6~in \cite{winter2021capturing} on whether the metric coloring can capture the Euclidean symmetries~of~a~polytope.


\subsection{\Cref{conj:main} for graph embeddings}
\label{sec:fixing_q_version_conjecture}

In \cref{ex:cube} we show that~\cref{conj:main} does not hold when replacing $Q$ by some more general graph embedding $q\:V(G_P)\to\RR^e$ of $G_P$, even if $0\in\Int\conv(q)$.


Our intuition for why this fails, and also what distinguishes it from the setting of our conjectures and the verified special cases is, that the embedding of \cref{ex:cube} does not ``wrap around the origin'' properly.
It is not quite clear what this means for an embedding of a graph, except that it feels right to assign this quality to polytope skeleta, to embeddings close to them, and also to centrally symmetric embeddings.

One possible formalization of this idea is expressed in the conjecture below, that is even stronger than \cref{conj:main} (the idea is due to Joseph Doolittle):


\begin{conjecture}\label{conj:josephs_conjecture}
Given a polytope $P\subset\RR^d$ and a graph embedding $q\:V(G_P)\to\RR^e$ of its edge-graph $G_P$, so that
\begin{myenumerate}
    \item for each vertex $i\in V(G_P)$ the cone 
    $$C_i:=q_i + \cone\{q_j-q_i\mid ij\in E(G_P)\}$$
    contains the origin in its interior,
    \item edges in $q$ are at most as long as in $P$, and
    \item vertex-origin distances in $q$ are at least as large as in $P$,
\end{myenumerate}
then $\skel(P)\simeq q$.
\end{conjecture}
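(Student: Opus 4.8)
The plan is to run the argument of the three verified special cases and reduce everything to \cref{lem:main}: writing $\phi\colon P\to\RR^e$ for the Wachspress map of \cref{def:Wachspress_map}, it suffices to produce some $x\in\Int(P)$ with $\|\phi(x)\|\le\|x\|$. The key preliminary observation is that, for \emph{every} $x\in\Int(P)$ with Wachspress coordinates $\alpha=\alpha(x)$, the identity $\sum_i\alpha_i\|p_i\|^2=\|x\|^2+\|P\|_\alpha^2$ (which holds because $\sum_i\alpha_i p_i=x$) combines with hypothesis \itm3 and \cref{res:expansion_main_result} to give
\[
\|\phi(x)\|^2+\|q\|_\alpha^2=\sum_i\alpha_i\|q_i\|^2\;\ge\;\sum_i\alpha_i\|p_i\|^2=\|x\|^2+\|P\|_\alpha^2\;\ge\;\|x\|^2+\|q\|_\alpha^2,
\]
so $\|\phi(x)\|\ge\|x\|$ holds \emph{unconditionally}. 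Hence condition \itm1 of \cref{lem:main} can be satisfied only with equality, and then \cref{lem:main} itself yields $\skel(P)\simeq q$. The conjecture thus reduces to the single assertion $0\in\phi(\Int P)$: if $\phi(x)=0$ with $x\in\Int P$, the displayed inequality forces $x=0$, so $0\in\Int(P)$, $\phi(0)=0$, and \cref{lem:main} applies.

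Hypothesis \itm1 enters first through an elementary consequence, namely $0\in\Int\conv(q)$. Given $u\ne 0$, pick a vertex $i$ maximising $\<q_i,u\>$; then $\<q_j-q_i,u\>\le 0$ for all $ij\in E$, so the full-dimensional cone $C_i-q_i=\cone\{q_j-q_i\mid ij\in E\}$ lies in the closed half-space $\{v:\<v,u\>\le 0\}$, and the interior of such a cone is contained in the open half-space. Since $0\in\Int C_i$ means $-q_i\in\Int(C_i-q_i)$, this forces $\<q_i,u\> > 0$, i.e.\ $\max_i\<q_i,u\> > 0$; as $u$ was arbitrary, $0\in\Int\conv(q)$. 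The same local data also controls $\phi$ on the boundary: because Wachspress coordinates are supported on closed vertex stars (\cref{rem:Wachspress_properties} \itm1), $\phi$ maps $\partial P$ into the subcomplex $\bigcup_F\conv\{q_i\mid p_i\in F\}$ over facets $F$ of $P$, just as in \cref{res:Wachspress_map_sends_boundary}, while the behaviour of $\phi$ near each vertex $p_i$ is governed by the cone $C_i$. Morally, \itm1 asserts that near every vertex the embedding $q$ ``wraps around the origin'' correctly --- the feature absent in \cref{ex:cube}.

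The heart of the proof --- and, I expect, the real obstacle --- is to upgrade ``$0\in\Int\conv(q)$ and $\phi(\partial P)$ stays in the facet subcomplex'' to ``$0\in\phi(\Int P)$''. I would attempt this by a degree/homotopy argument of the kind used for \cref{thm:main_local} and \cref{res:Wachspress_map_surjective} via \cref{res:topological_argument}: show that $\phi|_{\partial P}\colon\partial P\to\conv(q)\setminus\{0\}$ is not null-homotopic, by patching together the local statements ``$0\in\Int C_i$'' along the face lattice of $P$. Several difficulties must be overcome: there is no combinatorially matching target polytope to homotope against; the assertion ``$\phi$ maps a neighbourhood of $p_i$ into $C_i$'' is only true to first order and needs an error estimate; and, most seriously, \itm1 only forces $e$ to be at most the minimum degree of $G_P$, so $e$ may strictly exceed $d=\dim P$ (e.g.\ for cross-polytopes), in which case a winding-number computation on $\partial P\cong S^{d-1}$ is not available. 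One natural sub-goal would be to show that \itm1 together with \itm2 and \itm3 already forces $q$ to be, up to an affine map, a $d$-dimensional embedding, reducing to $e=d$; alternatively one would need a topological input insensitive to $e$. Crossing this gap is, I believe, the crux of the conjecture.

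Finally, I would keep the rigidity-theoretic reading in view: the statement is exactly the universal rigidity of the tensegrity on the vertices of $P$ together with the origin, with the edges declared cables and the vertex-origin segments declared struts. A positive-semidefinite equilibrium stress of maximal rank for this tensegrity would prove the conjecture through Connelly's criterion; the Izmestiev matrix of \cref{res:Izmestiev} supplies a maximal-rank equilibrium stress (adjoin the origin as a vertex with the stresses $-\alpha_i$), but that stress is indefinite and hence insufficient by itself. Hypothesis \itm1 is then the natural candidate for the extra non-degeneracy needed either to upgrade it or to produce a genuinely semidefinite stress --- which is in keeping with the conjecture still being open.
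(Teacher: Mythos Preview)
This statement is presented in the paper as an open \emph{conjecture}; the paper gives no proof, so there is nothing to compare against. Your proposal is not a proof either, and you say so yourself: the step ``show $0\in\phi(\Int P)$'' is named as the crux and left unresolved.

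That said, the parts you do carry out are correct and in one place sharpen the paper. The paper remarks after \cref{lem:main} that the only way its hypothesis \itm1 can fail is if $\|\phi(x)\|>\|x\|$ for all $x\in\Int(P)$, and calls this ``hard to exclude in general''. You actually prove that under hypotheses \itm2 and \itm3 of the conjecture one always has $\|\phi(x)\|\ge\|x\|$ for every $x\in\Int(P)$: this drops out of the two-row identity in the proof of \cref{lem:main} read with the inequalities reversed, together with \cref{res:expansion_main_result}. Hence \cref{lem:main} \itm1 can only ever be met with equality, and in particular $0\in\phi(\Int P)$ would force $0\in\Int(P)$ with $\phi(0)=0$. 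This is a clean observation the paper does not make explicit.

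Your derivation of $0\in\Int\conv(q)$ from hypothesis \itm1 is correct and more careful than the paper's one-line remark. Your diagnosis of the remaining obstacle --- a degree or homotopy argument that must survive the case $e>d$, where $\partial P\cong S^{d-1}$ carries no top homology in $\RR^e\setminus\{0\}$ --- is accurate, and this is precisely why the conjecture is stated as open.
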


Note that since $\bigcap_i C_i\subseteq\conv(q)$, \itm1 already implies $0\in\Int\conv(q)$.

\subsection{Classical rigidity of frameworks}
\label{sec:classical_rigidity}

We previously commented on a \mbox{natural~in}\-terpretation of \cref{res:centrally_symmetric} and \cref{thm:main_local} in the language of classical~rigidity theory  (we refer to \cite{connelly1996second} for any rigidity specific terminology used below).

Consider the edges~of $P$ as \emph{cables} that can contract but not expand, and connect all vertices of $P$ to the origin using \emph{struts} that can expand but not contract.
This is known as a \emph{tensegrity framework}, and we shall call it the \emph{tensegrity} of $P$.
\Cref{thm:main_local} then asserts that these tensegrities are always (locally) rigid.

Using the language of rigidity, a number of natural follow up questions arise.\nls
So it turns out that swapping cables and struts does not necessarily~preserve rigidity; see \Cref{fig:twisting_cube} for an example.
As a consequence, the tensegrity of a polytope is not necessarily \emph{infinitesimally rigid}, because infinitesimally rigid frameworks stay rigid under swapping cables and struts.


\begin{figure}[h!]
    \centering
    \includegraphics[width=0.7\textwidth]{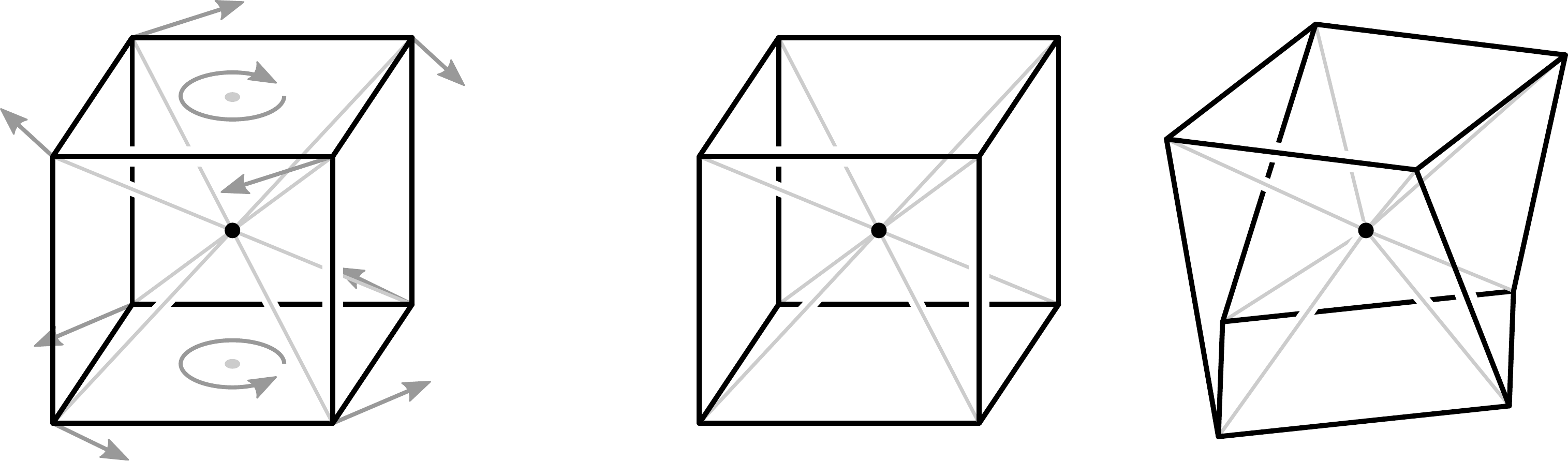} 
    \caption{Already the skeleton of the cube is not rigid if considered as a tensegrity framework with struts for edges and central cables. Twisting the top and bottom face lengthens the edge struts but keeps the central cables of a fixed length. This corresponds to the infinitesimal flex shown on the left.}
    \label{fig:twisting_cube}
\end{figure}

Lacking first-order rigidity, we might ask for higher-order rigidity instead: 

\begin{question}
    Is the tensegrity of a polytope always \emph{second-order rigid}, or perhaps even \emph{prestress stable}?
\end{question}

For an interpretation of \cref{res:centrally_symmetric} as a tensegrity framework, consider a cable at each edge as before, but each central strut now connects a vertex $p_i$ to~its antipodal counterpart $-p_i$, and is fixed in its center to the origin.
\Cref{res:centrally_symmetric} then asserts that this tensegrity framework is \emph{universally rigid}, \ie\ it has a unique~realization across all dimensions.

Here too, swapping cables and struts does not preserve universal or even global rigidity (see \cref{fig:octagon_ex}).
It does not preserve local rigidity either (see \cref{ex:twisting_4_cube}).

\begin{figure}[h!]
    \centering
    \includegraphics[width=0.37\textwidth]{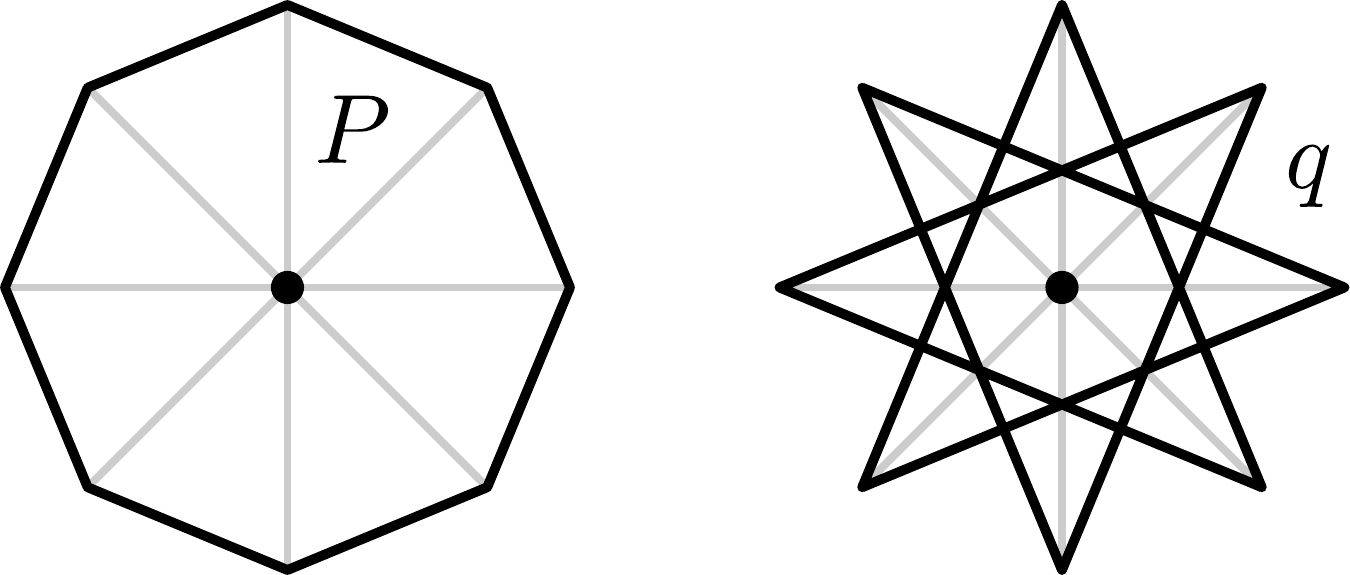}
    \caption{An octagon and an embedding of its edge-graph with longer edges but equally long central cables, showing that the respective tensegrity framework is not globally rigid under forced central symmetry.}
    \label{fig:octagon_ex}
\end{figure}



\begin{example}
\label{ex:twisting_4_cube}
Consider the 4-cube with its ``top'' and ``bottom'' facets (which are~3-cubes) embedded in the hyperplanes $\RR^3\times\{\pm 1\}$ respectively.
We flex the~skeleton~as follows: deform the~top facet as shown in \cref{fig:twisting_cube}, and the bottom~facet~so~as~to keep the framework centrally symmetric, while keeping both inside their respective hyperplanes.
The edge struts~inside the facets become longer, and the edge~struts~between the facets have previously been of minimal length between the hyperplanes, can therefore also only increase in length.
The lengths of the central cables stay~the same.
\end{example}

As a consequence, the centrally symmetric tensegrity frameworks too are not~necessarily infinitesimally rigid.






\subsection{Schlegel diagrams}
\label{sec:Schlegel_diagram}

Yet another interpretation of the frameworks discussed in \cref{sec:classical_rigidity} is as skeleta of special \emph{Schlegel diagrams}, namely, of pyramids whose base facet is the polytope $P$. It is then natural to ask whether a general Schlegel diagram is rigid as well (this was brought up by Raman Sanyal).

\begin{figure}[h!]
    \centering
    \includegraphics[width=0.42\textwidth]{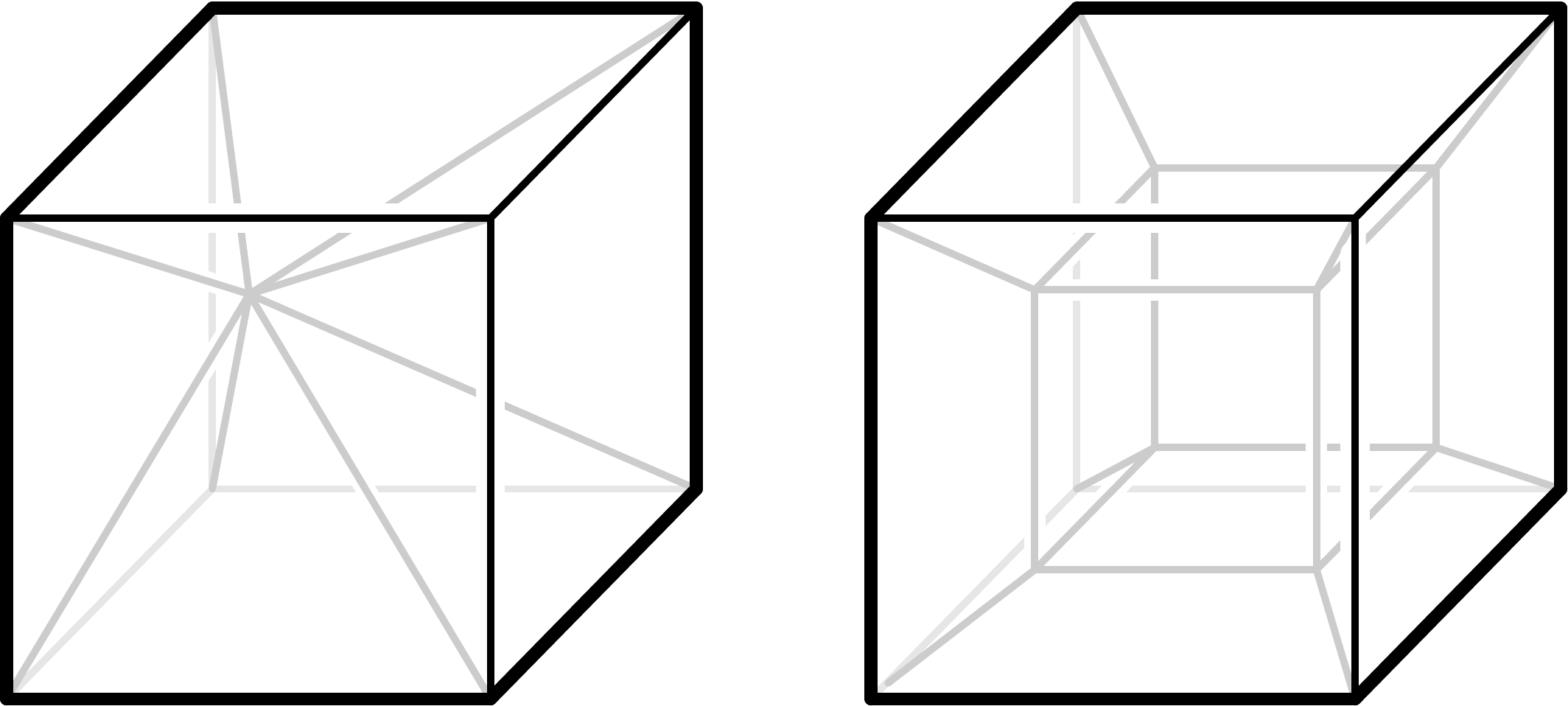}
    \caption{Two Schlegel diagrams of 4-polytopes: of the pyramid with the 3-cube as base facet (left) and of the 4-cube (right).}
    \label{fig:Schlegel_diagram}
\end{figure}

The question of rigidity for Schlegel diagrams is already interesting in dimension two, that is, for Schlegel diagrams of 3-polytopes. 
The edge-graphs of many 3-poly\-topes are too sparse to be generically rigid in $\RR^2$, and~so one might expect that~most of their Schlegel diagrams are flexible.
Indeed, flexible Schlegel diagrams exist (see \cref{fig:flex_Schlegel}, left).

\begin{figure}[h!]
    \centering
    \includegraphics[width=0.67\textwidth]{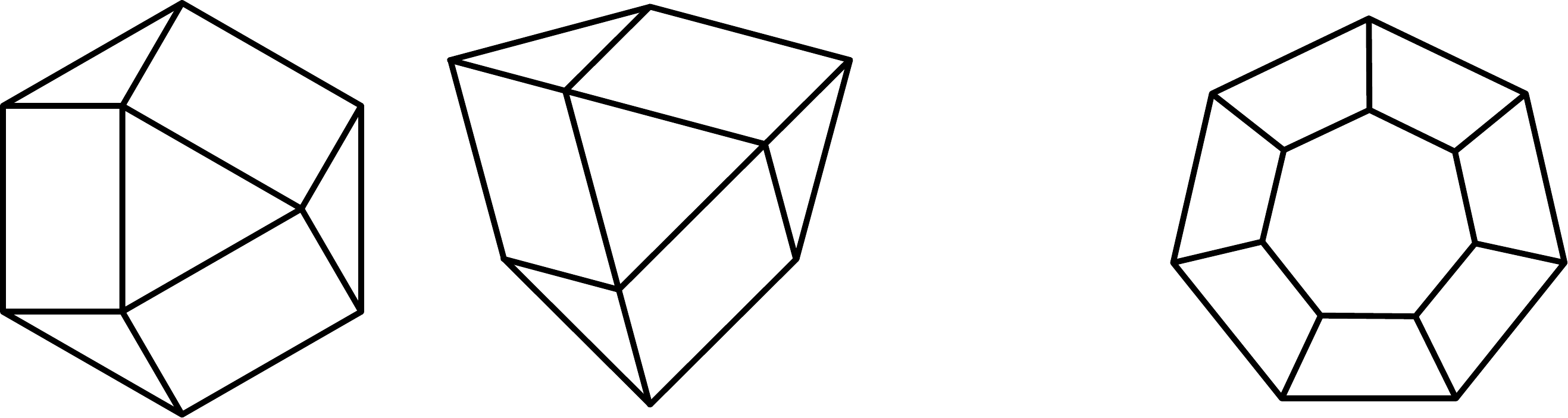}
    \caption{A flexible Schlegel diagram (left), and a rigid Schlegel diagram (right).}
    \label{fig:flex_Schlegel}
\end{figure}

Surprisingly however, this seems to be the exception rather than the rule.
For example, we believe that Schlegel diagrams of $(2n+1)$-gonal prisms are always rigid (see \cref{fig:flex_Schlegel}, right).
Since Schlegel diagrams are very special realizations (they are projections of convex objects), the generic ones among them might very well be rigid. This is not clear so far.

\begin{question}
    Is a generic Schlegel diagram rigid?
\end{question}

Above we considered Schlegel diagrams as bar-joint frameworks. 
If we consider them as tensegrity frameworks then it is easy to find generically flexible examples (see \cref{fig:Schlegel_diagram_and_twist}).
Schlegel diagrams are also not necessarily globally rigid (see \cref{fig:Schlegel_diagram_cube_folding}).

\begin{figure}[h!]
    \centering
    \includegraphics[width=0.37\textwidth]{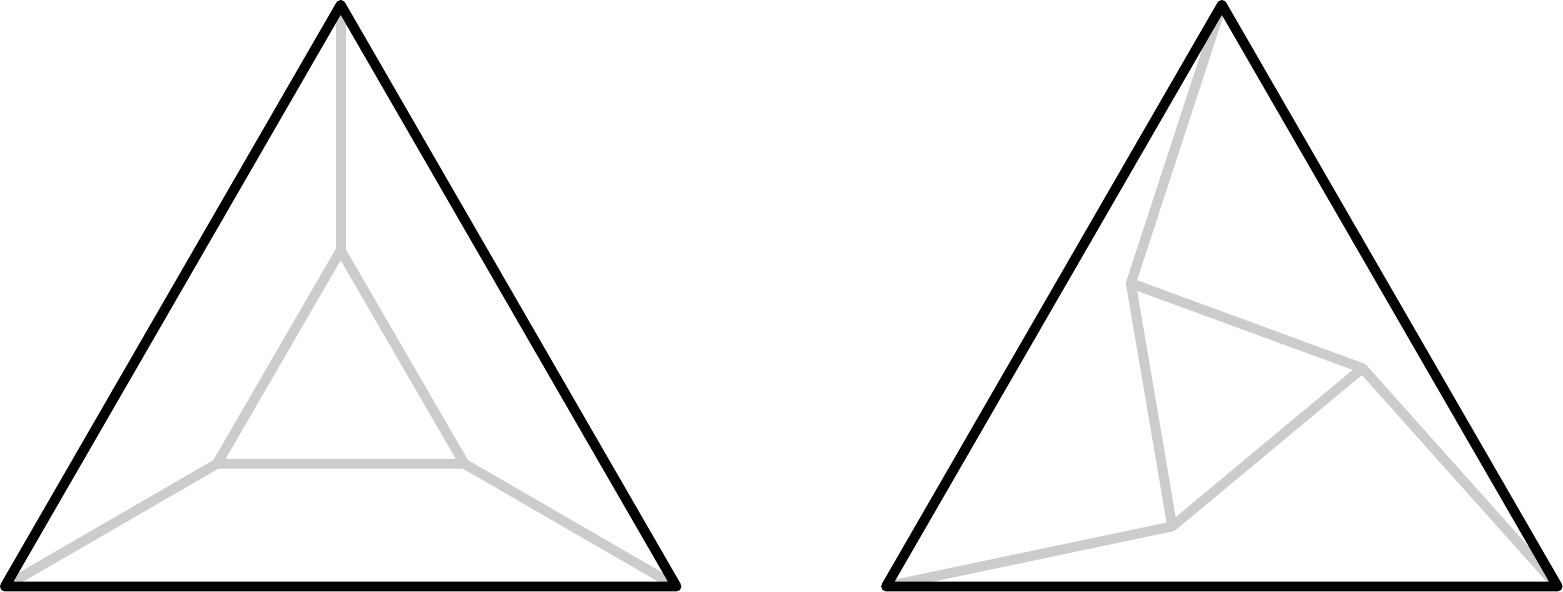}
    \caption{The skeleton of Schlegel diagram of a triangular prism with cables on the outside and struts on the inside is not rigid. Twisting the inner triangle increases the lengths of struts and fixes all other lengths.}
    \label{fig:Schlegel_diagram_and_twist}
\end{figure}

\begin{figure}[h!]
    \centering
    \includegraphics[width=0.37\textwidth]{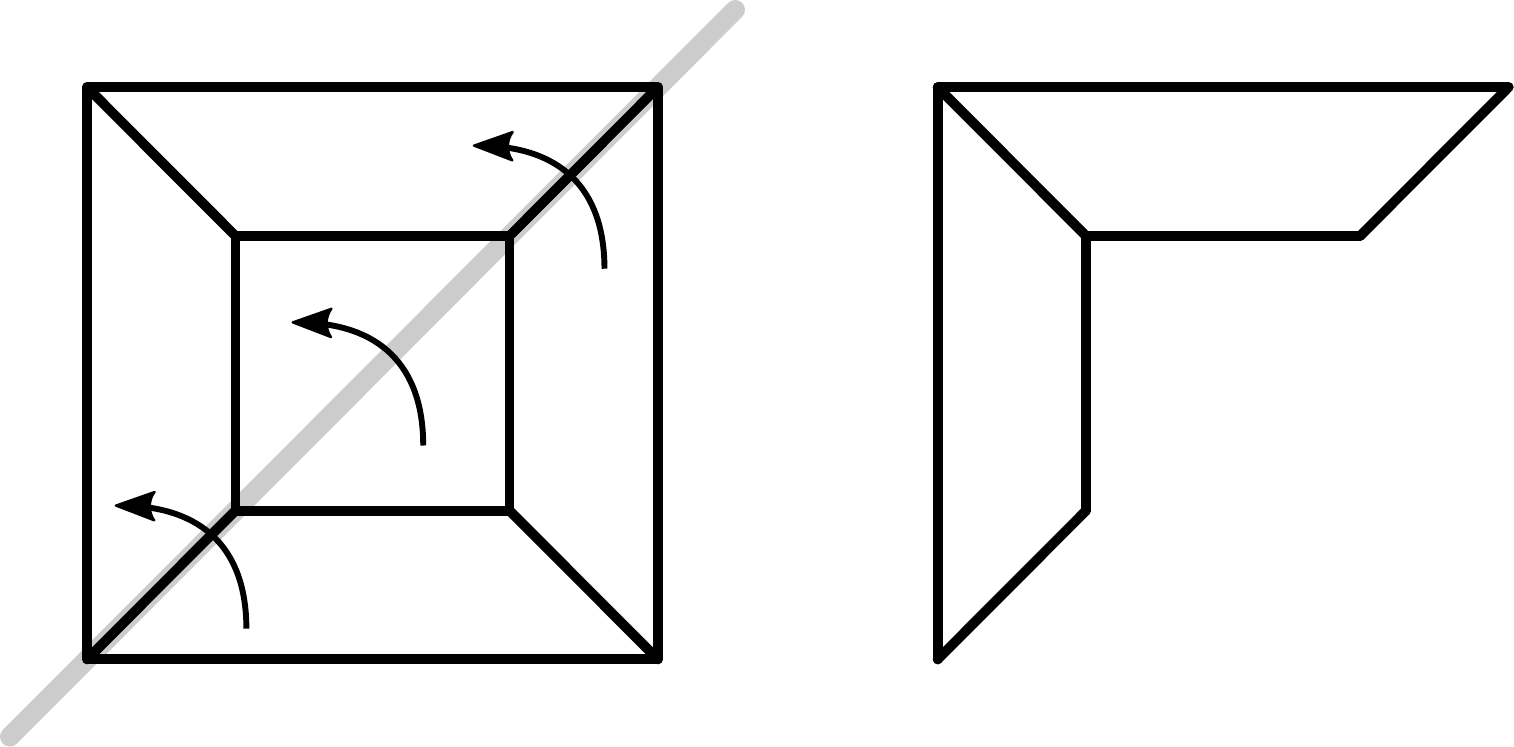}
    \caption{Folding the Schlegel diagram of the 3-cube along a diagonal preserves all edge lengths.}
    \label{fig:Schlegel_diagram_cube_folding}
\end{figure}

\subsection{Stoker's conjecture}
\label{sec:Stokers_conjecture}

Stoker's conjecture asks whether the dihedral angles of a polytope determine its face angles, and thereby its overall shape to some degree.
Recall that \emph{dihedral angles} are the angle at which facets meet in faces of codimension two, whereas \emph{face angles} are the dihedral angles of the facets.
Stoker's~conjecture was asked in 1968 \cite{stoker1968geometrical}, and a proof was claimed recently by Wang and Xie \cite{wang2022gromov}:

\begin{theorem}[Wang-Xie, 2022]\label{res:Stokers_conjecture}
Let $P_1$ and $P_2$ be two combinatorially equivalent polytopes such~that corresponding dihedral angles are equal.
Then all corresponding face angles are equal as well.
\end{theorem}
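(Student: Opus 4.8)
Our own methods are metric in nature -- they pin a polytope down through \emph{edge lengths} and \emph{vertex--origin distances} -- whereas Stoker's hypothesis is purely angular. Passing to polar duals turns the dihedral angles at ridges of $P_i$ into data dual to the edge lengths of $P_i^\circ$, but the polar translation of \cref{res:comb_type_rigid} would in addition need the distances of the facets of $P_i^\circ$ from the origin, which Stoker's hypothesis does not provide. So I do not expect a clean reduction to the results of this paper: the Wachspress / Izmestiev / $\alpha$-expansion machinery is suited to metric data and seems to contribute nothing directly. The plan is instead to prove \cref{res:Stokers_conjecture} by a deformation-plus-rigidity argument of the kind suggested by the title of \cite{wang2022gromov}.

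\textbf{Setup.} Let $\mathcal X$ be the real-analytic moduli space of labelled convex $n$-polytopes combinatorially equivalent to $P_1$, taken modulo isometries and dilations; near any full-dimensional polytope it is a manifold. Write $\Theta\colon\mathcal X\to\RR^{R}$ for the map recording the dihedral angle at each of the $R$ codimension-$2$ faces, and $\Phi\colon\mathcal X\to\RR^{A}$ for the map recording all face angles, i.e.\ the dihedral angles of the facets at their own codimension-$2$ faces. The theorem is the implication $\Theta(P_1)=\Theta(P_2)\ \Rightarrow\ \Phi(P_1)=\Phi(P_2)$. The proof has a local half and a global half.

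\textbf{Local half: infinitesimal Stoker.} First I would show that at every $P\in\mathcal X$ one has $\ker d\Theta_P\subseteq\ker d\Phi_P$, i.e.\ a first-order flex fixing all dihedral angles also fixes all face angles. The input is the Euclidean Schl\"afli differential formula: applied to a facet $F$ (itself an $(n-1)$-polytope) it gives the linear relation $\sum_{G}\vol_{n-3}(G)\,d\alpha_{F,G}=0$ over the codimension-$2$ faces $G\subset F$; the remaining relations come from viewing $\partial P$ as a flat cone manifold whose cone angles are the $2\theta$'s, so that a flex fixing the $\theta$'s is a closed $1$-form in the associated deformation complex, and a Bochner--Weitzenb\"ock / integration-by-parts identity on the doubled polytope forces it to be tangent to the facets, hence to fix the face angles. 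This is the genuinely analytic step, and it is in essence the infinitesimal Stoker theorem of Mazzeo--Montcouquiol (and related work of Schlenker).

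\textbf{Global half (the main obstacle).} Upgrading ``$d\Phi$ vanishes on $\ker d\Theta$'' to ``$\Phi$ is constant on the level set $\Theta^{-1}(\theta_0)$'' is the hard part, since that level set need not be connected and one cannot simply integrate along a path. Here I would double $P_1$ and $P_2$ across their boundaries to obtain flat manifolds with cone singularities along the codimension-$2$ faces whose cone angles agree, use the equality of dihedral angles together with convexity to construct a distance-nonincreasing, combinatorics-preserving map from one to the other, and invoke a Gromov-type rigidity/comparison theorem (a volume or Bishop--Gromov comparison forcing such a map to be an isometry facet by facet). Once each facet $F_1\subset\partial P_1$ is identified isometrically with its counterpart $F_2\subset\partial P_2$, the face angles -- being intrinsic to the facets -- coincide, which is exactly the claim. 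The honest summary: the local half is standard (Schl\"afli plus a vanishing theorem), and essentially all the difficulty, as well as everything that is new relative to this paper, sits in the comparison-geometry argument of the global half.
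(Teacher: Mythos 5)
You should first note that the paper does not prove this statement at all: it is quoted verbatim as an external result of Wang and Xie \cite{wang2022gromov}, so there is no internal proof to compare against. Your contextual remarks are consistent with the paper's own framing -- the paper explicitly states that it is not aware of an easy implication in either direction between Stoker's conjecture and its own \cref{res:Stokers_conjecture_variant}, and your observation that the polar translation of \cref{res:comb_type_rigid} would additionally require facet--origin distances is exactly the point the paper makes. So your decision not to attempt a reduction to the Wachspress/Izmestiev machinery is the right call.

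As a proof, however, your proposal has a genuine gap precisely where you acknowledge it. The local half is defensible (the Schl\"afli differential and the infinitesimal Stoker theorem of Mazzeo--Montcouquiol are real inputs), but even granting it, $\ker d\Theta_P\subseteq\ker d\Phi_P$ only controls $\Phi$ along paths inside a level set of $\Theta$, and you correctly note that $\Theta^{-1}(\theta_0)$ need not be connected -- so the local half alone proves nothing about two polytopes that are not joined by a flex. The global half is then stated as a wish list: you posit a distance-nonincreasing, combinatorics-preserving map between the doubled cone manifolds and then ``invoke a Gromov-type rigidity/comparison theorem'' to force it to be a facetwise isometry. Neither the construction of that map from equality of dihedral angles alone, nor the identification and verification of the hypotheses of the comparison theorem in the singular flat setting, is supplied; this is essentially the entire content of the Wang--Xie paper. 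The proposal is therefore an accurate roadmap of the known strategy rather than a proof, and it should be presented as such (or the theorem should simply be cited, as the paper does).
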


Our results allow us to formulate a semantically similar statement.
%
The following is a direct consequence of \cref{res:comb_type_rigid} when expressed for the polar dual polytope:

\begin{corollary}
\label{res:Stokers_conjecture_variant}
Let $P_1$ and $P_2$ be two combinatorially equivalent polytopes such that corresponding dihedral angles and facet-origin distances are equal.
Then $P_1\simeq P_2$.
\end{corollary}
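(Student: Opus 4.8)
The plan is to reduce \Cref{res:Stokers_conjecture_variant} to \Cref{res:comb_type_rigid} by passing to polar duals. First, recall the precise dictionary between a polytope $P$ (with $0\in\Int P$) and its polar dual $P^\circ$: vertices of $P^\circ$ correspond to facets of $P$, edges of $P^\circ$ correspond to faces of codimension two (ridges) of $P$, and the face-lattice of $P^\circ$ is the face-lattice of $P$ turned upside down. Under this correspondence, if $F$ is a facet of $P$ with outer unit normal $u_F$ at distance $h_F$ from the origin, then the corresponding vertex of $P^\circ$ is the point $u_F/h_F$; hence the \emph{vertex-origin distance} of that vertex of $P^\circ$ equals $1/h_F$, i.e.\ is determined by (and determines) the facet-origin distance $h_F$ of $F$. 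I would state this as a short lemma.

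Next I would verify that an \emph{edge length} in $P^\circ$ is determined by the \emph{dihedral angle} of $P$ along the corresponding ridge together with the two facet-origin distances at its endpoints. Indeed, if $F_i, F_j$ are the two facets of $P$ meeting along a ridge $R$, with unit normals $u_i, u_j$ and origin-distances $h_i, h_j$, then the corresponding edge of $P^\circ$ joins $u_i/h_i$ and $u_j/h_j$, so its squared length is $1/h_i^2 + 1/h_j^2 - 2\langle u_i,u_j\rangle/(h_i h_j)$. The quantity $\langle u_i,u_j\rangle = -\cos\vartheta_R$, where $\vartheta_R$ is the interior dihedral angle of $P$ along $R$; so the edge length in $P^\circ$ is a function of $\vartheta_R$, $h_i$, $h_j$ alone. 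This is the one place where a (completely routine) computation is needed, and it is also the step I expect to require the most care in bookkeeping — not because it is hard, but because one must be consistent about interior versus exterior angles and about the sign conventions in the polar duality map; I would carry it out explicitly.

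With these two observations, the proof is immediate. Given $P_1, P_2$ combinatorially equivalent with equal corresponding dihedral angles and equal corresponding facet-origin distances, form $P_1^\circ$ and $P_2^\circ$ (using the fixed interior point $0$; since $0\in\Int P_k$ we have $0\in\Int P_k^\circ$). These are combinatorially equivalent (dual face-lattices of combinatorially equivalent polytopes are isomorphic), and by the lemma they have the same edge-graph, the same edge lengths (by the dihedral-angle computation), and the same vertex-origin distances (by the facet-distance dictionary). \Cref{res:comb_type_rigid} then gives $P_1^\circ \simeq P_2^\circ$ via an orthogonal transformation fixing $0$. Finally, polar duality with respect to the origin commutes with orthogonal transformations — if $T\in\OO(d)$ then $(TK)^\circ = T(K^\circ)$ — so $P_1 = (P_1^\circ)^\circ \simeq (P_2^\circ)^\circ = P_2$, again via an orthogonal transformation. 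This completes the proof; the only genuine work beyond invoking \Cref{res:comb_type_rigid} is the elementary polar-duality bookkeeping sketched above.
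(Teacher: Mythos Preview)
Your proposal is correct and is exactly the approach the paper intends: the paper states only that the corollary is ``a direct consequence of \cref{res:comb_type_rigid} when expressed for the polar dual polytope,'' and you have correctly spelled out the polar-duality dictionary (facet-origin distances $\leftrightarrow$ reciprocal vertex-origin distances, dihedral angles together with facet distances $\leftrightarrow$ dual edge lengths) that makes this work.
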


While the assumptions in \cref{res:Stokers_conjecture_variant} are unlike stronger compared~to~Sto\-ker's conjecture (we require facet-origin distances), we also obtain isometry instead of~just identical face angles.
While related, we are not aware that either of \cref{res:Stokers_conjecture}~or \cref{res:Stokers_conjecture_variant} follows from the other one easily.

\subsection{Pure edge length constraints}

Many polytopes cannot be reconstructed~up to isometry from their edge-graph and edge lengths alone (recall \cref{fig:flex_polytope}).
However, for all we know the following is open:

\begin{question}
    \label{q:comb_from_edge_lengths}
    Is the combinatorial type of a polytope uniquely determined by its edge-graph~and edge lengths?
\end{question}

This alone would already prove \cref{conj:main_rigid_intro} (by \cref{res:comb_type_rigid}).
It would~also imply a positive answer to the following: 

\begin{question}
    Is a polytope uniquely determined up to isometry by its 2-skeleton (\ie\ the face-lattice cut off at, but including dimension two) and the shape of each 2-face?
\end{question}

Note that this is a particular strengthening of Cauchy's rigidity theorem, which requires the face-lattice to be prescribed in its entirety, rather than on some lower levels only.

Let us now fix the combinatorial type.
We are aware of three types of polytopes that are not determined (up to isometry) by their face-lattice and edge lengths:
\begin{myenumerate}
    \item $n$-gons with $n\ge 4$.
    \item Minkowski sums: if $P=Q+R$ and $Q$ and $R$ are generically oriented \wrt\ each other, then a slight reorientation of the summands changes the shape of $P$ but keeps its edge lengths (see \cref{fig:deforming_cuboctahedron}).
    \item polytopes having all edge~directions on a ``conic at infinity'': this implies~an affine flex (see \cite{connelly2018affine}). This is most easily implemented for~zonotopes (\mbox{recall}~\cref{fig:length_preserving_linear_trafo}), but happens for other polytopes as well, such as~3-polytopes with~up to five edge directions (see \cref{fig:sliced_cube}).
\end{myenumerate}

\begin{figure}[h!]
    \centering
    \includegraphics[width=0.46\textwidth]{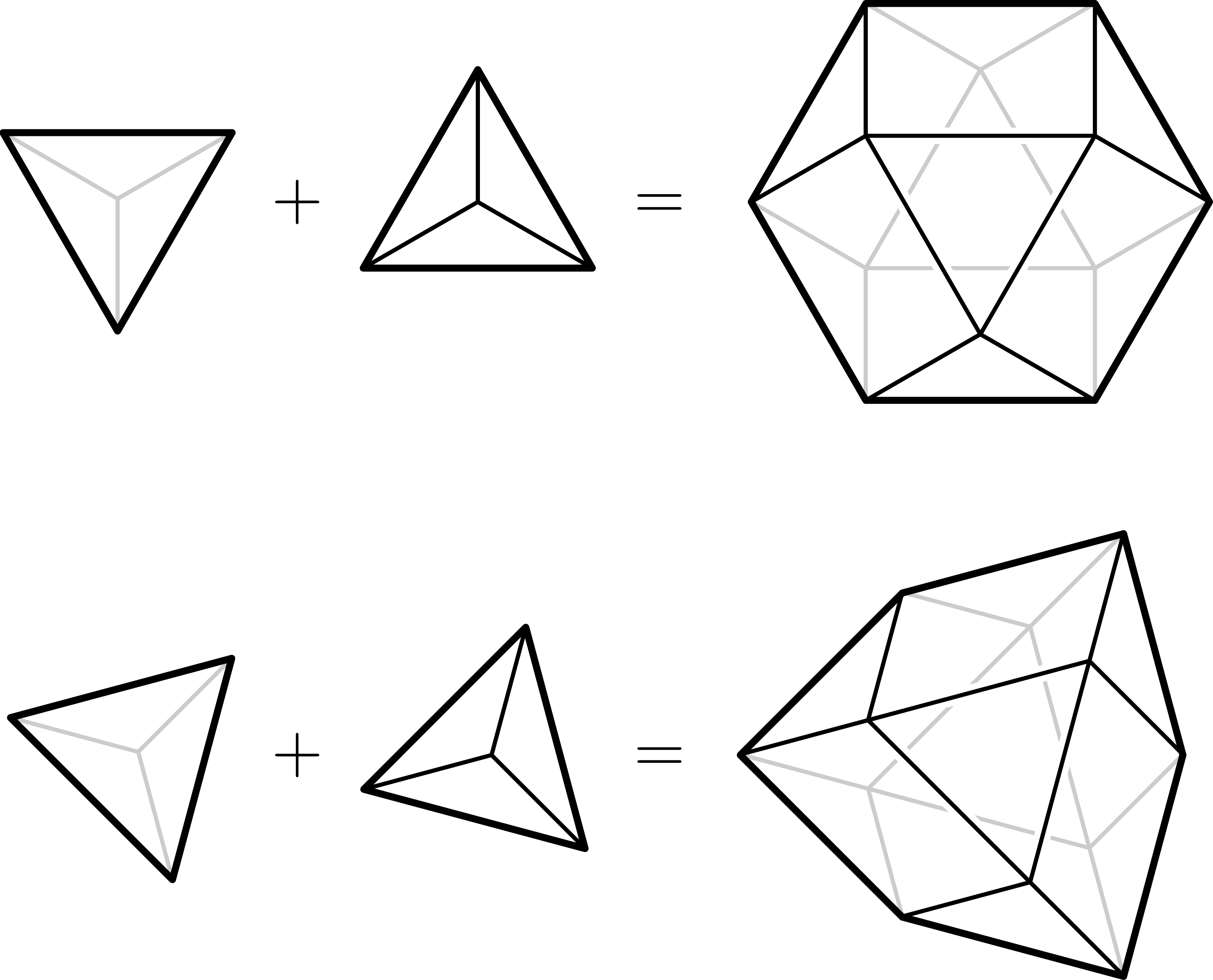}
    \caption{The cuboctahedron can be written as the Minkowski sum~of two simplices, and twisting these simplices leads to a flex of the cuboctahedron that preserves edge lengths.}
    \label{fig:deforming_cuboctahedron}
\end{figure}

\begin{figure}[h!]
    \centering
    \includegraphics[width=0.16\textwidth]{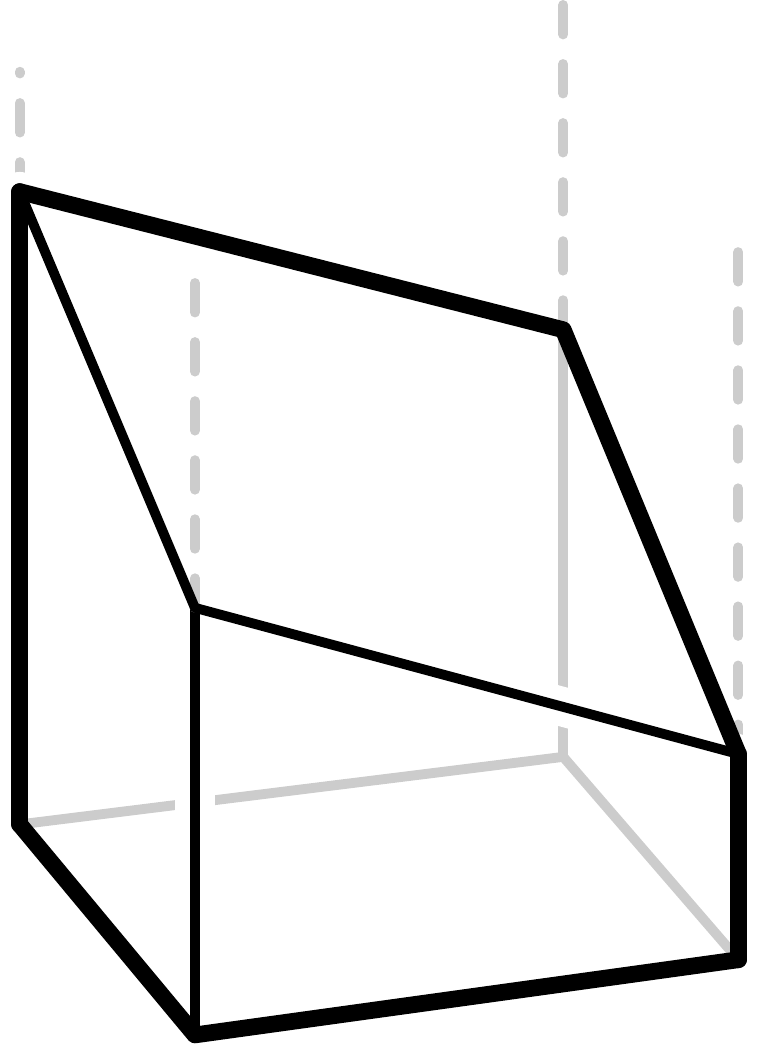}
    \caption{A cuboid sliced at an angle in an appropriate way has only five edge directions and has an edge length preserving affine flex. The flex deforms the bottom face into a rhombus and keeps the vertical edges vertical.}
    \label{fig:sliced_cube}
\end{figure}

We are not aware of other examples of polytopes that flex in this way and~so~we wonder whether this is already a full characterization.

\begin{question}
If a polytope is \emph{not} determined up to isometry by its combinato\-rial type and edge lengths, is it necessarily a polygon, a non-trivial Minkowski sum or has all its edge directions on a conic at infinity? 
Is this true at least~up~to~dimen\-sion three?
\end{question}

In how far a 3-polytope is determined by local metric data at~its edges
was~reportedly discussed in an Oberwolfach question session (as communicated by Ivan~Izmes\-tiev on MathOverflow \cite{434771}), where the following more general question was asked:

\begin{question}\label{q:Ivans_question}
Given a simplicial 3-polytope and at each edge we prescribe either the length or the dihedral angle, in how far does this determine the polytope?
\end{question}

Having length constraints at every edge determines a simplicial polytope already up to isometry via Cauchy's rigidity theorem (\cref{res:Cauchy_rigidity}).
The angles-only version is exactly the 3-dimensional Stoker's conjecture (\cref{sec:Stokers_conjecture}). We are not~aware that this question has been addressed in the literature beyond these two extreme cases.

Note also that \cref{q:Ivans_question} is stated for \emph{simplicial} 3-polytopes, but actually~includes general 3-polytopes via a trick: if $P$ is not simplicial, triangulate every 2-face, and at each new edge created in this way prescribe a dihedral angle of $180^\circ$ to prevent~the faces from folding at it.

\subsection{Injectivity of the Wachspress map $\boldsymbol{\phi}$}
\label{sec:Wachspress_map}


In \cref{res:Wachspress_map_surjective} we proved that~the Wachspress map $\phi\: P\to Q$ (\cf\ \cref{def:Wachspress_map}) between combinatorially equivalent polytopes is surjective.
In contrast, the injectivity of~the Wachspress map has been established only in dimension two by Floater and Kosinka \cite{floater2010injectivity} and is conjectured for all $d\ge 3$.

\begin{conjecture}
\label{conj:Wachspress_injective}
The Wachspress map $\phi\: P\to Q$ is injective.
\end{conjecture}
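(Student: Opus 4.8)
Since \cref{conj:Wachspress_injective} is stated as open, what follows is a strategy rather than a proof. Assume $P$ and $Q$ are full-dimensional, combinatorially equivalent polytopes in $\RR^d$. The plan is to upgrade the surjectivity of $\phi$ (\cref{res:Wachspress_map_surjective}) to the statement that $\phi\:P\to Q$ is a homeomorphism, and hence injective. First, by induction on $d$, the restriction $\phi|_{\partial P}$ is a homeomorphism onto $\partial Q$: indeed $\phi$ carries each proper face $\sigma$ of $P$ onto $\sigma_Q$, and does so as the Wachspress map of $\sigma$ itself (\cref{res:Wachspress_map_sends_boundary}); for $d\le 2$ this is the Floater--Kosinka theorem \cite{floater2010injectivity} (the case $d=1$ being trivial), and for $d>2$ the inductive hypothesis applies face by face. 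Second --- and this is the crucial step --- one should show that $\phi$ is a \emph{local homeomorphism} on $\Int(P)$; since $\phi$ is real-analytic there (Wachspress coordinates are rational with the adjoint polynomial of $P$ as a nonvanishing denominator), this amounts to showing that the Jacobian $D\phi(x)$ is invertible for every $x\in\Int(P)$. Granting both facts, a standard covering-space argument finishes: a local homeomorphism at an interior point maps into $\Int(Q)$, so $\phi|_{\Int(P)}$ is a proper local homeomorphism onto the simply connected manifold $\Int(Q)$, hence a covering, hence (as $\Int(P)$ is connected) a homeomorphism; this glues with the boundary homeomorphism to make the continuous bijection $\phi\:P\to Q$ a homeomorphism.

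To attack the Jacobian I would exploit the linear identities satisfied by the Wachspress coordinate map $\alpha\:P\to\Delta_n$. Writing $X_P\T=(p_1,\dots,p_n)$ and $X_Q\T=(q_1,\dots,q_n)$ as in \cref{res:expansion_main_result}, we have $x=X_P\T\alpha(x)$, $\mathbf 1\T\alpha(x)=1$ and $\phi(x)=X_Q\T\alpha(x)$. Differentiating at $x\in\Int(P)$,
\[
\Id_d = X_P\T\, D\alpha(x), \qquad \mathbf 1\T D\alpha(x)=0, \qquad D\phi(x)=X_Q\T\, D\alpha(x).
\]
In particular $D\alpha(x)$ has rank $d$. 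Thus $D\phi(x)$ is singular exactly when there is $v\neq 0$ with $w:=D\alpha(x)v$ satisfying $\sum_i w_iq_i=0$, while $\sum_i w_i=0$ and $\sum_i w_ip_i=v\neq 0$. So the conjecture reduces to the assertion that the image of $D\alpha(x)$ meets the space of affine dependences among the $q_i$ only in $0$. Here one would plug in the explicit form of $D\alpha(x)$ coming from the volume/second-derivative description of the Wachspress weights and the Izmestiev-type matrices of the generalized polar duals (\cf\ \eqref{eq:variational_definition}), hoping to produce a positive-semidefinite ``stress'' certificate --- in the spirit of the proof of \cref{res:expansion_main_result}, where the single positive eigenvalue of the Izmestiev matrix carried the argument --- forcing $D\phi(x)$ to be nondegenerate off the trivial directions.

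The main obstacle is precisely this Jacobian nonvanishing: it is the content of \cite{floater2010injectivity} in dimension two, and no higher-dimensional analogue is known. The difficulty is that Wachspress coordinates are genuinely rational (the adjoint polynomial can have high degree for non-simple polytopes), and $D\alpha(x)$ blends contributions from every face through each vertex $p_i$ in a way that has so far resisted a clean positivity argument; one should moreover expect trouble near configurations admitting affine flexes (\cf\ \cref{fig:length_preserving_linear_trafo}), where it must be checked that the putative certificate does not itself degenerate. My honest expectation is that, while the topological reduction above is routine, closing the Jacobian step will require a genuinely new idea beyond the tools developed here --- perhaps a direct combinatorial-geometric analysis of the gradient of the adjoint polynomial, or a variational characterization of $\phi$ that makes a suitable monotonicity manifest.
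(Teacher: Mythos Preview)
The paper does not prove \cref{conj:Wachspress_injective}; it is explicitly posed as open (known only for $d\le 2$ via Floater--Kosinka). So there is no ``paper's own proof'' to compare against, and you were right to frame your contribution as a strategy.

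Your topological reduction is sound and standard: once one knows (a) $\phi|_{\partial P}$ is a homeomorphism onto $\partial Q$ and (b) $\phi$ is a local homeomorphism on $\Int(P)$, the proper-covering argument goes through as you describe, since $\phi$ sends $\Int(P)$ into $\Int(Q)$ (all Wachspress weights are strictly positive on the interior, and a strictly positive convex combination of the vertices of a full-dimensional polytope lies in its interior) and extends continuously to send $\partial P$ to $\partial Q$, making the restriction proper. One small point you should not leave implicit: your inductive use of (a) rests on the claim that $\phi$ restricted to a face $\sigma$ \emph{coincides with} the Wachspress map of $\sigma$ (not merely that it lands in $\sigma_Q$, which is all \cref{res:Wachspress_map_sends_boundary} gives). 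This face-restriction property of Wachspress coordinates is true and is in the literature (it is one of Warren's characterizing axioms), but it is not stated in the present paper and deserves an explicit citation or proof sketch.

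You have correctly isolated the genuine obstacle: the nonvanishing of $\det D\phi(x)$ on $\Int(P)$ for $d\ge 3$. Your linear-algebraic reformulation---that $D\phi(x)$ is singular precisely when $\operatorname{im} D\alpha(x)$ meets the space of affine dependences among the $q_i$ nontrivially---is clean and potentially useful. But as you yourself note, nothing in the Izmestiev-matrix toolkit developed here seems to force this transversality; the single-positive-eigenvalue structure of $M$ controls $\ker M = \Span X_P$, not the image of $D\alpha$. So the proposal is an honest reduction with an honest gap, and the gap is exactly the conjecture's content.
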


If true, the Wachspress map would provide an interesting and somewhat canonical homeomorphism (in fact, a rational map, see \cite{warren2003uniqueness}) between any two combinato\-rially~equivalent polytopes.

\subsection{What if $\boldsymbol{0\not \in \mathrm{int}(Q)}$?}

If $0\not\in Q$ then \cref{fig:origin_outside_trivial_ex,fig:origin_outside_tnontriv_ex} show that our conjectures fail. 
We do however not know whether in the ``unique reconstruction''~case~the~num\-ber of solutions would be finite.

\begin{question}
    Given edge-graph, edge lengths and vertex-origin distances, are there only finitely many polytopes with these parameters? 
\end{question}

This is in contrast to when we replace $Q$ with a graph embedding $q\:V(G_P)\to\RR^e$, which can have a continuum of realizations (see \cref{fig:flex_pyramid}).

\begin{figure}[h!]
    \centering
    \includegraphics[width=0.4\textwidth]{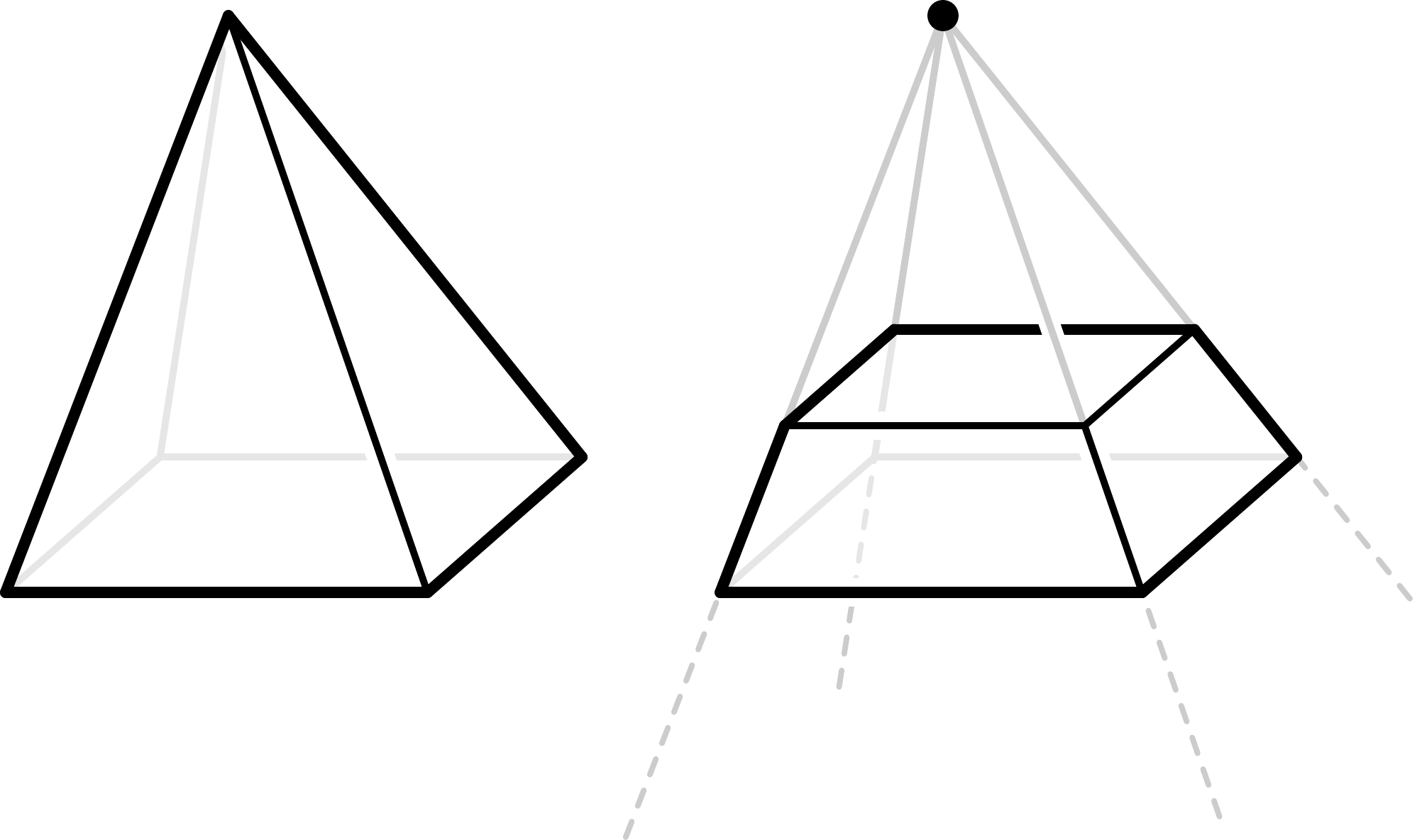} 
    \caption{The square based pyramid (left) is flexible as a framework (since then the bottom face needs not stay flat). Likewise, the framework of the square based frustum with this particular choice of origin (right) flexes. It is however (locally) rigid as a polytope.}
    \label{fig:flex_pyramid}
\end{figure}

In \cref{sec:combinatorial_equivalence} we showed that reconstruction from the face-lattice, edge lengths and vertex-origin distances is possible even if the origin lies only in the inside of a facet of $P$, but that it can fail if it lies in a face of codimension \emph{three}. 
We do not know what happens for a face of codimension \emph{two}.

\begin{question}
\label{q:codimension_two}
Is a polytope
uniquely determined by its face-lattice, edge lengths and vertex-origin distances if the origin is allowed to lie in the inside of faces of~co\-dimension $0, 1$ and $2$?
\end{question}

\newpage

\appendix

\section{Perron-Frobenius theory}
\label{sec:appendix_perron_frobenius}

The following fragment of the Perron-Frobenius theorem is relevant to this article. Recall that a matrix is \emph{irreducible} if no simultaneous row-column permutation brings it in a block-diagonal form with more than one block; or equivalently, if it is not the (weighted) adjacency matrix of a disconnected graph.
See also \cite{frobenius1912matrizen}.

\begin{theorem}[Perron-Frobenius]\label{res:Perron_Frobenius}
Let $M\in\RR^{n\x n}$ be a non-negative irreducible symmetric matrix, then 
\begin{myenumerate}
    \item the largest eigenvalue $\theta$ of $M$ is positive and has multiplicity one.
    \item there is a $\theta$-eigenvector $z\in\RR^n$ with strictly positive entries. 
\end{myenumerate}
\end{theorem}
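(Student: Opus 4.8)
The statement to prove is the Perron–Frobenius theorem for non-negative irreducible symmetric matrices, in the fragment: the largest eigenvalue is positive with multiplicity one, and admits a strictly positive eigenvector.

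\medskip

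\textbf{Plan of proof.} The plan is to exploit the variational characterization of the largest eigenvalue of a symmetric matrix via the Rayleigh quotient, together with irreducibility to upgrade non-negativity of an eigenvector to strict positivity. First I would set $\theta := \max\{x\T M x : \|x\| = 1\}$, which by compactness of the unit sphere is attained at some unit vector $z$, and by the spectral theorem $\theta$ is the largest eigenvalue of $M$ with $Mz = \theta z$. The first observation is that we may take $z$ to have non-negative entries: since $M$ is non-negative (entrywise), the vector $|z|$ (taken entrywise) satisfies $|z|\T M |z| = \sum_{i,j} M_{ij}|z_i||z_j| \ge \sum_{i,j} M_{ij} z_i z_j = z\T M z = \theta$ and $\||z|\| = 1$, so $|z|$ is also a maximizer and hence also a $\theta$-eigenvector. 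Replace $z$ by $|z|$, so now $z \ge 0$, $z \ne 0$, $Mz = \theta z$.

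\medskip

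Next I would show $z$ is \emph{strictly} positive using irreducibility. Suppose for contradiction that the support $S := \{i : z_i > 0\}$ is a proper non-empty subset of $\{1,\dots,n\}$. For any $i \notin S$ we have $0 = \theta z_i = (Mz)_i = \sum_{j \in S} M_{ij} z_j$, and since $M_{ij} \ge 0$ and $z_j > 0$ for $j \in S$, this forces $M_{ij} = 0$ for all $i \notin S$, $j \in S$. By symmetry $M_{ji} = 0$ as well. Thus $M$ decomposes into a block-diagonal form along the partition $S \sqcup (S^c)$, contradicting irreducibility. Hence $S = \{1,\dots,n\}$, i.e. $z > 0$; this proves (ii). Along the way this also shows $\theta > 0$: if $\theta = 0$ then $Mz = 0$ with $z > 0$, so each row sum $\sum_j M_{ij} z_j = 0$ forces $M = 0$, which is reducible for $n \ge 2$ (and the $n=1$ case is trivial since an irreducible $1\times 1$ matrix has a positive entry). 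Actually, more directly, $\theta = z\T M z = \sum_{ij} M_{ij} z_i z_j \ge 0$, and equality would force $M_{ij} = 0$ whenever $z_i, z_j > 0$, i.e. $M = 0$ since $z > 0$ — again contradicting irreducibility. So $\theta > 0$.

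\medskip

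Finally I would prove the multiplicity is one. Let $w$ be any real $\theta$-eigenvector. The maximizer argument above applied to $w$ shows $|w|$ is also a $\theta$-eigenvector, hence by the strict positivity argument $|w| > 0$, so $w$ has no zero entries and is of constant sign; WLOG $w > 0$. Now if $z$ and $w$ are two positive $\theta$-eigenvectors, choose $t > 0$ minimal so that $z - t w$ has a zero entry; then $z - tw \ge 0$ is a $\theta$-eigenvector with non-trivial zero set, so unless $z - tw = 0$ we again contradict strict positivity. Hence $z = tw$, so the $\theta$-eigenspace is one-dimensional. The main obstacle — though a mild one — is handling the degenerate small cases ($n = 1$, or $M$ having an all-zero row, which irreducibility already precludes for $n \ge 2$) and being careful that the "constant sign" deduction for a general eigenvector $w$ genuinely uses symmetry via the Rayleigh-quotient maximality, rather than assuming what is to be proved.
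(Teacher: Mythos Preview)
The paper does not prove this theorem; it merely states the fragment of Perron--Frobenius needed elsewhere and cites \cite{frobenius1912matrizen}. So there is no ``paper's own proof'' to compare against.

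Your argument is the standard Rayleigh-quotient proof for the symmetric case and is essentially correct. One small step deserves an extra sentence: from ``$|w|>0$, so $w$ has no zero entries'' you jump directly to ``$w$ is of constant sign,'' but having no zero entries does not by itself force constant sign. The clean way to close this is to note that $|w|\pm w$ are both non-negative $\theta$-eigenvectors; if, say, $|w|-w\neq 0$, then your support argument forces $|w|-w>0$ entrywise, i.e.\ $w<0$; otherwise $w=|w|>0$. Alternatively, from $|w|\T M|w|=w\T M w$ deduce $M_{ij}(|w_iw_j|-w_iw_j)=0$ for all $i,j$, so $w_i,w_j$ share a sign whenever $M_{ij}>0$, and irreducibility (connectedness) propagates this globally. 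With that filled in, the proof is complete; the $n=1$ edge case is irrelevant for the paper's applications in any event.
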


\section{Cauchy's rigidity theorem}
\label{sec:appendix_cauchy_rigidity}

Cauchy's famous rigidity theorem was initially formulated in dimension three~and is often quoted briefly as follows:
\begin{quote}
    \emph{3-polytopes with isometric faces are themselves isometric}.
\end{quote}
Generalizations to higher dimensions have been proven by Alexandrov \cite{alexandrov2005convex} where~one assumes isometric \emph{facets} to conclude global isometry (see also its proof in \cite{pak2010lectures}).
We state a rigorous version that only requires isometric 2-faces and that can be easily derived from the facet versions using induction by dimension:

\begin{theorem}[Cauchy's rigidity theorem, version with 2-faces]\label{res:Cauchy_rigidity}
Given two combinatorially equivalent polytopes $P,Q\subset\RR^d$ and a face-lattice isomorphism $\phi\:\F(P)\to\F(Q)$.
If $\phi$ extends to an isometry on every 2-face $\sigma\in\F_2(P)$, then $\phi$ extends to an isometry on all of $P$, that is, $P\simeq Q$.
\end{theorem}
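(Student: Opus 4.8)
The plan is to deduce this from the \emph{facet version} of the Cauchy--Alexandrov rigidity theorem, which we take as given: if $P'$ and $Q'$ are combinatorially equivalent $d'$-polytopes with $d'\ge 3$, and under the combinatorial isomorphism each facet of $P'$ is isometric to the corresponding facet of $Q'$, then $P'\simeq Q'$ via an isometry inducing that combinatorial isomorphism. (For $d'=3$ this is the classical Cauchy theorem; for $d'\ge 4$ it is Alexandrov's generalization, see \cite{alexandrov2005convex,pak2010lectures}.) From this we argue by induction on $d=\dim P=\dim Q$, the point being that a facet has one dimension less while inheriting the $2$-face hypothesis.

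The base case $d=2$ is trivial: a $2$-polytope is its own unique $2$-face, so the assumption that $\phi$ extends to an isometry on every $2$-face of $P$ \emph{is} the conclusion.

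For the inductive step let $d\ge 3$ and assume the statement in all dimensions strictly below $d$. Fix a facet $F\in\F_{d-1}(P)$ and let $F_Q:=\phi(F)$ be the corresponding facet of $Q$. The face-lattice $\F(F)$ is the interval $[\varnothing,F]$ in $\F(P)$, so $\phi$ restricts to a face-lattice isomorphism $\phi|_F\colon\F(F)\to\F(F_Q)$ compatible with the vertex labels. Every $2$-face of $F$ is also a $2$-face of $P$, hence by hypothesis $\phi|_F$ extends to an isometry on each of them. Since $2\le\dim F=d-1<d$, the induction hypothesis applies to the pair $(F,F_Q)$ and yields that $\phi|_F$ extends to an isometry $F\xrightarrow{\sim}F_Q$. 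Thus, under $\phi$, every facet of $P$ is isometric to the corresponding facet of $Q$, and the facet version of Cauchy's theorem gives $P\simeq Q$ via an isometry inducing $\phi$.

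I do not anticipate a genuine obstacle: the entire weight of the argument rests on the cited facet version, and the rest is bookkeeping — that the restriction of a face-lattice isomorphism to a facet is again a face-lattice isomorphism (being the restriction to an interval), that $2$-faces of a facet are $2$-faces of the ambient polytope, and that the planar base case is trivially in place. The one point deserving a sentence of care is that the isometry produced at the end induces the prescribed $\phi$ rather than some unrelated combinatorial isomorphism; this is already built into the facet version, whose proof glues the chosen facet isometries (each extending $\phi$ restricted to that facet) along shared ridges, so no extra argument is needed.
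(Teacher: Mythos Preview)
Your proof is correct and follows exactly the approach indicated in the paper, which only states that this version ``can be easily derived from the facet versions using induction by dimension'' without spelling out the details. You have supplied precisely those details: induction on $d$, applying the induction hypothesis to each facet to upgrade the $2$-face isometries to facet isometries, then invoking the Cauchy--Alexandrov facet version.
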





\section{Some linear algebra}
\label{sec:appendix_linear_algebra}

\begin{theorem}\label{res:linear_algebra}
Given two matrices $A\in\RR^{d\x n}$ and $B\in\RR^{d\x m}$ \mbox{with $\Span B\subseteq\Span A$}, there exists a linear transformation $T\:\RR^n\to\RR^m$ with $T A\T = B\T$.
\begin{proof}
Set $U_A:=\Span A$ and $d_A:=\dim U_A=\rank A$. 
Respectively, set $U_B:=\Span B\subset U_A$ and $d_B:=\dim U_B=\rank B$.
We can assume that the columns of $A$ and $B$ are sorted so that $a_1,...,a_{d_A}\in U_A$ form a basis, and likewise $b_1,...,b_{d_B}\in U_B$ form a basis. Let $\tilde T\:U_A\to U_B$ be the uniquely determined linear map that maps $\tilde T  a_i = b_i$ for $i\in\{1,...,d_B\}$ and $\tilde T a_i = 0$ for $i\in\{d_B+1,...,d_A\}$. Then $\tilde T A = B$.

The Moore-Penrose pseudo inverse $A^\dagger\in\RR^{n\x d}$ of $A$ satisfies $AA^\dagger=\pi_{U_A}$, where $\pi_{U_A}$ is the orthogonal projection onto $U_A$.
We set $T:= (A^\dagger \tilde T A)\T$ and verify
$$TA\T = (AT\T)\T = (AA^\dagger\tilde T A)\T = (\pi_{U_A} \tilde T A)\T = (\pi_{U_A} B)\T = B\T,$$
where for the last equality we used that all columns of $B$ are already in $U_A$ and the projection acts as identity.
\end{proof}
\end{theorem}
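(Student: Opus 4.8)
The plan is to reduce the claim to the elementary linear-algebra fact that a vector lying in the column space of a matrix is that matrix applied to some coefficient vector, handled one column at a time.

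First I would unwind the transposes. Set $S := T\T \in \RR^{n\x m}$; then, transposing both sides and using $(XY)\T = Y\T X\T$, the desired identity $TA\T = B\T$ is equivalent to $AS = B$. So it suffices to exhibit a matrix $S \in \RR^{n\x m}$ with $AS = B$ and then take $T := S\T$, which is automatically a linear map $\RR^n \to \RR^m$ (no further structure is demanded of it).

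Next I would build $S$ columnwise. Let $b_1, \dots, b_m \in \RR^d$ be the columns of $B$. Since $\Span B \subseteq \Span A$, each $b_j$ lies in the column space of $A$, so there is some $s_j \in \RR^n$ with $A s_j = b_j$; a canonical such choice is $s_j := A^\dagger b_j$, where $A^\dagger$ is the Moore--Penrose pseudoinverse, because $A A^\dagger$ is the orthogonal projection onto $\Span A$ and $b_j$ already lies in $\Span A$ (any choice of $s_j$ works, but this one makes $S = A^\dagger B$ explicit). Assembling $S := (s_1 \mid \cdots \mid s_m)$ gives $AS = (A s_1 \mid \cdots \mid A s_m) = (b_1 \mid \cdots \mid b_m) = B$, and hence $TA\T = (AS)\T = B\T$.

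There is essentially no obstacle here: the statement is a standard fact and the proof is really just a reminder. The only points that need (routine) care are keeping the row/column conventions straight through the transposes, and observing that the requested ``linear transformation'' is simply an arbitrary $m\x n$ matrix, so its existence is immediate once $S$ has been produced.
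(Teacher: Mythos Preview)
Your proof is correct and uses essentially the same idea as the paper: both exploit that $AA^\dagger$ is the orthogonal projection onto $\Span A$, so that any column of $B$ already lying there is recovered as $AA^\dagger b_j=b_j$. Your route is in fact slightly more direct, since you go straight to $S=A^\dagger B$ and $T=S\T$, whereas the paper first builds an auxiliary map $\tilde T\:U_A\to U_B$ before wrapping it with $A^\dagger$ and $A$; the pseudoinverse step is the common core.
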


\section{A topological argument}
\label{sec:topological_argument}






\begin{lemma}\label{res:topological_argument}
Let $K\subset\RR^d$ be a compact convex set, $x\in\Int(K)$ a point and $f\:K\times[0,1]\to\RR^d$ a homotopy with $f(\free,0)=\id_K$. If the restriction $f|_{\partial K}\: \partial K\times[0,1]\to\RR^d$ yields a homotopy of $\partial K$ in $\RR^d\setminus\{x\}$, then $x\in\Int f(K,1)$.
\end{lemma}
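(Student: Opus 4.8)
The plan is to deduce this from the homotopy invariance of the Brouwer degree. By translating, assume $x=0$, and write $g:=f(\free,1)\:K\to\RR^d$ for the end map of the homotopy. Since $K$ is compact and convex with $0\in\Int(K)$, the set $\Omega:=\Int(K)$ is a bounded open subset of $\RR^d$ with $\overline\Omega=K$ and $\partial\Omega=\partial K$, so $g$ is a continuous map on the closure of $\Omega$. The hypothesis that $f|_{\partial K}$ is a homotopy of $\partial K$ in $\RR^d\nozero$ says exactly that $0\notin f(\partial K,t)$ for all $t\in[0,1]$; in particular $0\notin g(\partial\Omega)$, so the Brouwer degree $\deg(g,\Omega,0)$ is well defined.

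First I would compute this degree. The family $t\mapsto f(\free,t)$ is an admissible homotopy of maps $\overline\Omega\to\RR^d$, in that $0$ is avoided on $\partial\Omega$ throughout, so by homotopy invariance of the degree
$$\deg(g,\Omega,0)=\deg\big(f(\free,0),\Omega,0\big)=\deg(\id_K,\Omega,0)=1,$$
the last equality because $0\in\Omega$. Next I would upgrade the conclusion "$0\in g(\Omega)$" to the stronger "$0\in\Int g(\Omega)$". Let $U$ be the connected component of the open set $\RR^d\setminus g(\partial\Omega)$ that contains $0$. The function $y\mapsto\deg(g,\Omega,y)$ is constant on each component of $\RR^d\setminus g(\partial\Omega)$, hence equals $1$ on all of $U$; since a nonzero degree guarantees a solution of $g(z)=y$ with $z\in\Omega$, we obtain $U\subseteq g(\Omega)\subseteq g(K)$. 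As $U$ is open and $0\in U$, this gives $0\in\Int f(K,1)$, as desired.

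The argument is mostly bookkeeping around a standard tool, and the hypothesis "homotopy in $\RR^d\nozero$" is used precisely to license the homotopy invariance of the degree. The one step that genuinely needs the degree (rather than a soft connectedness argument) is the passage from "$x$ lies in the image" to "a neighbourhood of $x$ lies in the image": it is the nonvanishing of $\deg(g,\Omega,\free)$ on the entire component $U$, not merely at $0$, that produces an open set inside $g(K)$. A more homological route is also available — $g|_{\partial K}$ is homotopic in $\RR^d\nozero$ to the inclusion $\partial K\hookrightarrow\RR^d\nozero$, which is a homotopy equivalence and hence essential, forcing $0\in g(K)$ by contractibility of $K$, with the interior statement then following from a local homology computation — but the degree-theoretic version packages both halves uniformly, so I would go that way.
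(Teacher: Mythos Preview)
Your proof is correct and takes a genuinely different route from the paper's. The paper argues by contradiction: assuming $x\notin f(K,1)$, it builds a continuous map $g\:K\to\partial K$ by radially projecting $f(y,1)$ from $x$ onto $\partial K$, shows via the homotopy $f|_{\partial K}$ that $g|_{\partial K}$ is homotopic to $\id_{\partial K}$, and then invokes the standard no-retraction argument in $\ZZ$-homology (the diagram $\partial K\hookrightarrow K\to\partial K$ cannot induce an isomorphism on $H_{d-1}$). You instead use Brouwer degree: homotopy invariance gives $\deg(g,\Omega,0)=1$, and then the local constancy of $y\mapsto\deg(g,\Omega,y)$ on components of $\RR^d\setminus g(\partial\Omega)$ immediately yields an open neighbourhood $U\ni 0$ inside $g(K)$.

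The main gain of your approach is that the passage from ``$x$ is in the image'' to ``$x$ is in the \emph{interior} of the image'' comes for free from the local constancy of the degree, whereas the paper's write-up handles this step rather briskly (it asserts that $x\notin\Int f(K,1)$ together with the boundary hypothesis forces $x\notin f(K,1)$, which is not entirely transparent as stated). The paper's approach, on the other hand, is slightly more self-contained in that it only needs singular homology rather than the full Brouwer degree machinery. Your closing remark already identifies this trade-off accurately.
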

\begin{proof}
Suppose that $x\not\in\Int f(K,1)$.
Since $\partial f$ is a homotopy in $\RR^d\setminus\{x\}$, we actually have $x\not\in f(K,1)$.
We derive a contradiction.

Construct a map $g\: K\to\partial K$ as follows: for $y\in K$ consider the unique ray~emanating from $x$ passing through $f(y,1)$. 
Let $g(x)$ be the unique intersection of this ray with $\partial K$.
Likewise, construct the map $h\:\partial K\times[0,1]\to\partial K$: for $y\in\partial K$ and $t\in[0,1]$, let $h(y,t)$ be the intersection of $\partial K$ with the unique ray emanating from $x$ and passing through $f(y,t)$. 
Note that $h(\free,0)=\id_{\partial K}$ and $h(\free,1)= g|_{\partial K}$.
In other words, $g|_{\partial K}$ is homotopic to the identity on $\partial K$.

The existence of such a map $g\:K\to\partial K$ is a well-known impossibility. This can be quickly shown by considering the following commutative diagram (left) and the diagram induced on the $\ZZ$-homology groups (right):
%

\begin{figure}[h!]
    \centering
$\begin{tikzcd}
	& K \\
	{\partial K} && {\partial K}
	\arrow["g|_{\partial K}", from=2-1, to=2-3]
	\arrow["i", hook, from=2-1, to=1-2]
	\arrow["g", from=1-2, to=2-3]
\end{tikzcd}$
\qquad
$\begin{tikzcd}
	& H_\bullet(K) \\
	{H_\bullet(\partial K)} && {H_\bullet(\partial K)}
	\arrow["(g|_{\partial K})_*", from=2-1, to=2-3]
	\arrow["i_*", from=2-1, to=1-2]
	\arrow["g_*", from=1-2, to=2-3]
\end{tikzcd}$
\end{figure}

\noindent
Since $g|_{\partial K}$ is homotopic to the identity, the arrow $(g|_{\partial K})_*$ is an isomorphism, and so must be the arrows above it. This is impossible because $$H_{d-1}(\partial K)=\ZZ\not=0=H_{d-1}(K).$$
\end{proof}


\section{Euler's homogeneous function theorem}

\begin{theorem}[Euler's homogeneous function theorem]
\label{res:Eulers_homogeneous_function_theorem}
Let $f\:\RR^n\to\RR$ be~a~homogeneous function of degree $d\ge1$ \ie\ $f(t\mathbf x)=t^d f(\mathbf x)$ for all $t\ge 0$.
Then
$$\sum_i x_i \frac{\partial f(\mathbf x)}{\partial x_i} = d\cdot f(\mathbf x).$$
\end{theorem}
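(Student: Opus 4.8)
Looking at this, the final statement is \textbf{Euler's homogeneous function theorem}: if $f\colon\RR^n\to\RR$ is homogeneous of degree $d\ge 1$, then $\sum_i x_i\,\partial f/\partial x_i = d\cdot f$.

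\textbf{Plan of proof.} The natural approach is to differentiate the homogeneity relation $f(t\mathbf x)=t^d f(\mathbf x)$ with respect to $t$ and then set $t=1$. First I would fix an arbitrary point $\mathbf x\in\RR^n$ and consider the single-variable function $g(t):=f(t\mathbf x)$ defined for $t>0$ (or $t\ge 0$, assuming $f$ is differentiable there). By hypothesis $g(t)=t^d f(\mathbf x)$. Differentiating the right-hand side gives $g'(t)=d\,t^{d-1}f(\mathbf x)$. Differentiating the left-hand side via the chain rule gives $g'(t)=\sum_i x_i\,\frac{\partial f}{\partial x_i}(t\mathbf x)$. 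Equating the two expressions yields
$$\sum_i x_i\,\frac{\partial f}{\partial x_i}(t\mathbf x) = d\,t^{d-1}f(\mathbf x)\quad\text{for all }t>0.$$
Evaluating at $t=1$ gives exactly $\sum_i x_i\,\partial f(\mathbf x)/\partial x_i = d\cdot f(\mathbf x)$, which is the claim.

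\textbf{Regularity caveat.} The one genuine subtlety is differentiability: the chain-rule step requires $f$ to be differentiable (not merely to have partial derivatives) at the points $t\mathbf x$. In the paper's application $f$ is $\vol(P^\circ(\mathbf c))$, which is smooth in a neighborhood of $\mathbf c = \mathbf 1$ (this is part of what the Izmestiev/Alexandrov machinery guarantees), so this is not an issue there; but for a clean standalone statement I would simply assume $f$ is $C^1$ on the relevant domain. One could also sidestep the behavior at $t=0$ by only invoking the relation for $t$ in a punctured neighborhood of $1$ and taking the limit, since $t\mapsto t^{d-1}$ is continuous.

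\textbf{Main obstacle.} There is essentially no obstacle — this is a textbook one-line computation. The only thing to be careful about is stating the hypotheses precisely enough (differentiability, and the domain on which homogeneity holds — here $t\ge 0$, so we stay within the positive scaling cone, matching how it is used for $\vol(P^\circ(t\mathbf c)) = t^d\vol(P^\circ(\mathbf c))$). I would therefore keep the proof to the three displayed lines above and add a sentence noting the $C^1$ assumption is automatically satisfied in the application.
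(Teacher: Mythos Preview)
Your proof is correct and follows exactly the same approach as the paper: differentiate the homogeneity relation $f(t\mathbf x)=t^d f(\mathbf x)$ with respect to $t$ via the chain rule and evaluate at $t=1$. The paper omits your regularity remarks, but the argument is otherwise identical.
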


\begin{proof}
Differentiate both sides of $f(t\mathbf x)=t^df(\mathbf x)$ \wrt\ $t$
%
%
$$dt^{d-1} f(\mathbf x)=\frac{\partial}{\partial t}(t^df(\mathbf x))=\frac{\partial}{\partial t}f(t\mathbf x)=\sum_i \frac{\partial f(t\mathbf x)}{\partial (tx_i)}\frac{\partial (tx_i)}{\partial t}=\sum_i x_i \frac{\partial f(t\mathbf x)}{\partial (tx_i)}$$
and evaluate at $t=1$.
\end{proof}

\section{An alternative proof of \cref{res:expansion_main_result} using semi-definite optimization}
\label{sec:semi_definite_proof}

The following proof of \cref{res:expansion_main_result} does not address the equality case.

\begin{proof}
\cref{res:expansion_main_result} can be equivalently phrased as the claim that the following~program attains its optimum if we choose $q_i$ to be the skeleton of $P$:
$$
\begin{array}{rl}
    \max & \|q\|_\alpha \\
     \text{s.t\rlap.} & \|q_i-q_j\|\le \|p_i-p_j\|,\quad\text{for all $ij\in E$}\\
     & q_1,...,q_n\in\RR^n
\end{array}
$$
%
Since \mbox{$\|q\|_\alpha^2=\frac12\sum_{i,j}\alpha_i\alpha_j\|q_i-q_j\|^2$} $=\sum_i\alpha_i\|q_i\|^2 - \|\!\sum_i\alpha_iq_i\|^2$, we obtain the follo\-wing equivalent program:
$$
\begin{array}{rl}
    \max & \sum_i\alpha_i\|q_i\|^2 =: e(q)\\
     \text{s.t\rlap.} & \sum_i\alpha_i q_i=0 \\
     & \|q_i-q_j\|\le \|p_i-p_j\|,\quad\text{for all $ij\in E$}\\
     & q_1,...,q_n\in\RR^n
\end{array}
$$
This particular 
program has been studied extensively (see \eg\ \cite{goring2011rotational,goring2008embedded,sun2006fastest}). 
It can~be re\-written as a semi-definite program (which we do not repeat here) with the~follow\-ing dual:
$$
\begin{array}{rl}
    \min & \sum_{ij\in E} w_{ij}\|p_i-p_j\|^2 =: d(w) \\
     \text{s.t\rlap.} 
        & L_w-\diag(\alpha)+\mu\alpha\alpha\T\succeq0\\
        &w\ge 0,\mu\text{ free}
\end{array}
$$
where $L_w$ is the Laplace matrix of $G_P$ with edge weights $w$ (that is $L_{ij}=-w_{ij}$ and $L_{ii}=\sum_{j\not= i} w_{ij}$), $\diag(\alpha)$ is the diagonal matrix with $\alpha$ on its diagonal, and $X\succeq0$ asserts that $X$ is a positive semi-definite matrix.

Recall the following property of a dual program: if there are $q_i\in\RR^n$, $w\ge 0$ and $\mu\in\RR$ so that the primal and the dual program attain the same objective value, then we know that there is no duality gap and we found optimal solutions for both programs.
We now claim that such a choice can be made using $q_i:=p_i$, $w_{ij}:=M_{ij}$ (where $M$ is the Izmestiev matrix of $P$), and with a \mbox{value for $\mu$ to be determined la}\-ter. We first verify that the objective values agree: 
%
%
\begin{align*}
d(M)&=\sum_{ij\in E}M_{ij}\|p_i-p_j\|^2
 =\tfrac12 \sum_{i,j}M_{ij}\|p_i-p_j\|^2
 \\&= \sum_i\Big(\sum_jM_{ij}\Big)\|p_i\|^2 - \sum_{i,j}M_{ij}\<p_i,p_j\>
 \\ &= \sum_i\alpha_i\|p_i\|^2 - \tr(\,\underbrace{MX_P}_{=0} X_P\T)
 \\&= \sum_i\alpha_i\|p_i\|^2=e(p),
\end{align*}
where $X_P\T:=(p_1,...,p_n)\in\RR^{n\x d}$,  $MX_P=0$ by \cref{res:Izmestiev} \itm4, as well as~$\sum_iM_{ij}$ $=\alpha_j$ by \cref{res:Wachspress_is_Izmestiev_row_sum}.

It only remains to verify that there exists $\mu\in\RR$ so that $L_M-\diag(\alpha)+\mu\alpha\alpha\T\succeq0$.
Set $D:=\diag(\alpha_1^{\smash{-1/2}},...,\alpha_n^{\smash{-1/2}})$ and observe that the matrices $X$ and $DXD$~have~the same signature.
It therefore suffices to verify
$$0\preceq D(L_M-\diag(\alpha)+\mu\alpha\alpha\T)D = DL_MD - \Id + \mu(D^{-1}\mathbf 1)(D^{-1}\mathbf 1)\T.$$
First we claim that $L_M-\diag(\alpha)=-M$. 
Since both sides agree on the off-diagonal, it suffices to compare their row sums. And in fact, since $L_M\mathbf1=0$~we~have~$(L_M-\diag(\alpha))\mathbf1=-\alpha=-M\mathbf1$.
Hence, $DL_MD-\Id$ has the same signature as~$-M$,\nls \ie\nls a unique negative eigenvalue, and one can check that the corresponding eigenvector is $D^{-1}\mathbf 1$. 
We see that the term $\mu(D^{-1}\mathbf 1)(D^{-1}\mathbf 1)\T$ just shifts this smallest eigenvalue of $DL_MD-\Id$ up or down, while not changing the other eigenvalues, and so we can choose $\mu$ large enough to make this eigenvalue positive. 
\end{proof}

The formulation of \cref{res:expansion_main_result} as a semi-definite program allows for a simultaneous reconstruction (\cf\  \cref{res:polytope_by_lenghts_and_alpha}) of both the polytope and its Izmestiev~matrix from only the edge-graph, the edge lengths and the Wachspress coordinates~of some interior point.
Since semi-definite programs can be solved in polynomial time, this approach is actually feasible in practice.

\par\bigskip
\parindent 0pt
\textbf{Funding.} 
This work was supported by the Engineering and Physical Sciences Research Council [EP/V009044/1]

\par\bigskip
\parindent 0pt
\textbf{Acknowledgements.} 
I thank Raman Sanyal, Joseph Doolittle, Miek Messerschmidt, Bernd Schulze, James Cruickshank, Robert Connelly and Albert Zhang for many fruitful discussions on the topic of this article, many of which lead to completely new perspectives on the results and to numerous subsequent questions.


\bibliographystyle{abbrv}
\bibliography{literature}

\begin{thebibliography}{10}

\bibitem{alexandrov2005convex}
A.~D. Alexandrov.
\newblock {\em Convex polyhedra}, volume 109.
\newblock Springer, 2005.

\bibitem{blind1987puzzles}
R.~Blind and P.~Mani-Levitska.
\newblock Puzzles and polytope isomorphisms.
\newblock {\em Aequationes mathematicae}, 34:287--297, 1987.

\bibitem{connelly2005generic}
R.~Connelly.
\newblock Generic global rigidity.
\newblock {\em Discrete \& Computational Geometry}, 33:549--563, 2005.

\bibitem{connelly2006stress}
R.~Connelly.
\newblock Stress matrices and {M} matrices.
\newblock {\em Oberwolfach Reports}, 3:678--680, 2006.

\bibitem{connelly2018affine}
R.~Connelly, S.~J. Gortler, and L.~Theran.
\newblock Affine rigidity and conics at infinity.
\newblock {\em International Mathematics Research Notices},
  2018(13):4084--4102, 2018.

\bibitem{connelly1996second}
R.~Connelly and W.~Whiteley.
\newblock Second-order rigidity and prestress stability for tensegrity
  frameworks.
\newblock {\em SIAM Journal on Discrete Mathematics}, 9(3):453--491, 1996.

\bibitem{410625}
J.~Doolittle.
\newblock Answer to \enquote{Given the skeleton of an inscribed polytope. {I}f
  {I} move the vertices so that no edge increases in length, can the
  circumradius still get larger?}
\newblock \url{https://mathoverflow.net/a/419107/108884}.
\newblock version: 2022-03-28.

\bibitem{doolittle2017reconstructing}
J.~Doolittle.
\newblock Reconstructing nearly simple polytopes from their graph.
\newblock {\em arXiv preprint arXiv:1701.08334}, 2017.

\bibitem{floater2015generalized}
M.~S. Floater.
\newblock Generalized barycentric coordinates and applications.
\newblock {\em Acta Numerica}, 24:161--214, 2015.

\bibitem{floater2010injectivity}
M.~S. Floater and J.~Kosinka.
\newblock On the injectivity of wachspress and mean value mappings between
  convex polygons.
\newblock {\em Advances in Computational Mathematics}, 32(2):163--174, 2010.

\bibitem{frobenius1912matrizen}
G.~Frobenius, F.~G. Frobenius, F.~G. Frobenius, F.~G. Frobenius, and
  G.~Mathematician.
\newblock {\"U}ber matrizen aus nicht negativen elementen.
\newblock 1912.

\bibitem{goring2008embedded}
F.~G{\"o}ring, C.~Helmberg, and M.~Wappler.
\newblock Embedded in the shadow of the separator.
\newblock {\em SIAM Journal on Optimization}, 19(1):472--501, 2008.

\bibitem{goring2011rotational}
F.~G{\"o}ring, C.~Helmberg, and M.~Wappler.
\newblock The rotational dimension of a graph.
\newblock {\em Journal of Graph Theory}, 66(4):283--302, 2011.

\bibitem{holzmann1973graphical}
C.~A. Holzmann, P.~Norton, and M.~Tobey.
\newblock A graphical representation of matroids.
\newblock {\em SIAM Journal on Applied Mathematics}, 25(4):618--627, 1973.

\bibitem{izmestiev2010colin}
I.~Izmestiev.
\newblock The colin de verdiere number and graphs of polytopes.
\newblock {\em Israel Journal of Mathematics}, 178(1):427--444, 2010.

\bibitem{johnson1966}
N.~W. Johnson.
\newblock Convex polyhedra with regular faces.
\newblock {\em Canadian Journal of Mathematics}, 18:169–200, 1966.

\bibitem{joswig2000neighborly}
M.~Joswig and G.~M. Ziegler.
\newblock Neighborly cubical polytopes.
\newblock {\em Discrete \& Computational Geometry}, 24:325--344, 2000.

\bibitem{ju2005geometric}
T.~Ju, S.~Schaefer, J.~D. Warren, and M.~Desbrun.
\newblock A geometric construction of coordinates for convex polyhedra using
  polar duals.
\newblock In {\em Symposium on Geometry Processing}, pages 181--186, 2005.

\bibitem{kalai1988simple}
G.~Kalai.
\newblock A simple way to tell a simple polytope from its graph.
\newblock {\em Journal of combinatorial theory, Series A}, 49(2):381--383,
  1988.

\bibitem{kikianty2023compact}
E.~Kikianty and M.~Messerschmidt.
\newblock On compact packings of euclidean space with spheres of finitely many
  sizes.
\newblock {\em arXiv preprint arXiv:2305.00758}, 2023.

\bibitem{lovasz2001steinitz}
L.~Lov{\'a}sz.
\newblock Steinitz representations of polyhedra and the colin de verdiere
  number.
\newblock {\em Journal of Combinatorial Theory, Series B}, 82(2):223--236,
  2001.

\bibitem{narayanan2021spectral}
H.~Narayanan, R.~Shah, and N.~Srivastava.
\newblock A spectral approach to polytope diameter.
\newblock {\em arXiv preprint arXiv:2101.12198}, 2021.

\bibitem{pak2010lectures}
I.~Pak.
\newblock Lectures on discrete and polyhedral geometry.
\newblock {\em Manuscript (http://www. math. ucla. edu/\~{} pak/book. htm)},
  2010.

\bibitem{pineda2022reconstructibility}
G.~Pineda-Villavicencio and B.~Schr\"oter.
\newblock Reconstructibility of matroid polytopes.
\newblock {\em SIAM Journal on Discrete Mathematics}, 36(1):490--508, 2022.

\bibitem{antipodalNotUnique}
D.~E. Speyer.
\newblock Does the edge-graph of a centrally symmetric polytope determine which
  vertices are antipodal?
\newblock \url{https://mathoverflow.net/q/440534/108884}.
\newblock version: 2023-02-27.

\bibitem{stoker1968geometrical}
J.~J. Stoker.
\newblock Geometrical problems concerning polyhedra in the large.
\newblock {\em Communications on pure and applied mathematics}, 21(2):119--168,
  1968.

\bibitem{sun2006fastest}
J.~Sun, S.~Boyd, L.~Xiao, and P.~Diaconis.
\newblock The fastest mixing markov process on a graph and a connection to a
  maximum variance unfolding problem.
\newblock {\em SIAM review}, 48(4):681--699, 2006.

\bibitem{wachspress1975rational}
E.~L. Wachspress.
\newblock A rational finite element basis.
\newblock 1975.

\bibitem{wang2022gromov}
J.~Wang and Z.~Xie.
\newblock On gromov's dihedral ridigidity conjecture and stoker's conjecture.
\newblock {\em arXiv preprint arXiv:2203.09511}, 2022.

\bibitem{warren1996barycentric}
J.~Warren.
\newblock Barycentric coordinates for convex polytopes.
\newblock {\em Advances in Computational Mathematics}, 6(1):97--108, 1996.

\bibitem{warren2003uniqueness}
J.~Warren.
\newblock On the uniqueness of barycentric coordinates.
\newblock {\em Contemporary Mathematics}, 334:93--100, 2003.

\bibitem{warren2007barycentric}
J.~Warren, S.~Schaefer, A.~N. Hirani, and M.~Desbrun.
\newblock Barycentric coordinates for convex sets.
\newblock {\em Advances in computational mathematics}, 27(3):319--338, 2007.

\bibitem{434771}
M.~Winter.
\newblock Is the dodecahedron flexible (as a polytope with fixed edge-lengths)?
\newblock \url{https://mathoverflow.net/q/434771}.
\newblock version: 2022-11-17.

\bibitem{winter2020symmetric}
M.~Winter.
\newblock Symmetric and spectral realizations of highly symmetric graphs, 2020.

\bibitem{winter2021capturing}
M.~Winter.
\newblock Capturing polytopal symmetries by coloring the edge-graph.
\newblock {\em arXiv preprint arXiv:2108.13483}, 2021.

\bibitem{winter2021spectral}
M.~Winter.
\newblock {\em Spectral Realizations of Symmetric Graphs, Spectral Polytopes
  and Edge-Transitivity}.
\newblock PhD thesis, Technische Universit{\"a}t Chemnitz, 2021.

\end{thebibliography}

\end{document}